\setlist[enumerate]{label=\emph{(\roman*)}}
\newtheorem{theorem}{Theorem}[section]
\newtheorem{lemma}[theorem]{Lemma}
\newtheorem{proposition}[theorem]{Proposition}
\theoremstyle{definition}
\newtheorem{remark}[theorem]{Remark}
\numberwithin{equation}{section}
\newcommand{\R}{\mathbb{R}}
\def \d {{\rm{d}}}
\begin{document}
	
	\parindent=0pt
	
	\title[Uniform boundedness of conformal energy]
	{Uniform boundedness of conformal energy for the 3D nonlinear wave equation}
	
	\author[J. Zhao]{Jingya Zhao}
	\address{Southern University of Science and Technology, Department of Mathematics, 518055 Shenzhen, China.}
	\email{12231282@mail.sustech.edu.cn}
	\begin{abstract}
		In this paper, we study three-dimensional nonlinear wave equations under the null condition, a fundamental model in the theory of nonlinear wave-type equations, initially investigated by Christodoulou \cite{Christodoulou86} and Klainerman \cite{Klainerman86}. For a class of large initial data, we establish global existence and linear scattering of solutions by combining refined energy estimates with a bootstrap argument. Moreover, we prove that the lower-order conformal energy remains uniformly bounded for all time.\\
		~\\
		\textbf{Keywords: }Nonlinear wave equation; Null condition; Global existence; Conformal energy
	\end{abstract}

	\maketitle
	
	\section{Introduction}
	
	\subsection{Model problem}
	We consider the following nonlinear wave equation in $\R^{1+3}$, which reads
	\begin{equation}\label{equ:Wave}
		-\Box w = P^{\gamma\alpha \beta} \partial_\gamma w\partial_\alpha  \partial_\beta w+\partial^\alpha w\partial_\alpha w,
	\end{equation}
	with the initial data prescribed at $t=0$
	\begin{equation}
		(w,\partial_t w)|_{t=0}=(w_0,w_1).\label{est:initial}
	\end{equation}
	
	In the above, $\Box = m^{\alpha\beta}\partial_\alpha \partial_\beta = -\partial_t^2 + \partial_1^2 + \partial_2^2 + \partial_3^2$ denotes the wave operator associated with the Minkowski metric $m=\mathrm{diag}(-1,1,1,1)$. Indices are raised and lowered with respect to $m$. The coefficients $P^{\gamma\alpha\beta}=P^{\gamma\beta\alpha}$ are constants satisfying the null condition
	\[
	P^{\gamma\alpha\beta}\xi_\gamma \xi_\alpha \xi_\beta = 0, \qquad \forall\, \xi \in \mathbb{R}^4 \text{ with } \xi_0^2=\sum_{a=1}^3 \xi_a^2.
	\]
	The null condition, first introduced by Klainerman~\cite{Klainerman83,Klainerman86}, arises naturally in various nonlinear wave equations. Throughout this paper, we adopt the Einstein summation convention for repeated upper and lower indices. Greek indices $\alpha,\beta,\ldots$ range over $\{0,1,2,3\}$, while Roman indices $a,b,\ldots$ range over $\{1,2,3\}$.
	
	\subsection{Main results}
	In this work, we investigate the nonlinear wave equation~\eqref{equ:Wave} with a class of large initial data. We establish the global existence of solutions together with linear scattering, prove the uniform boundedness of both the natural (ghost weight) energy and the lower-order conformal energy, and derive pointwise decay estimates. The main results are summarized below.
	\begin{theorem}\label{thm:main1}
		Let $N\in\mathbb{N}$ with $N\geq5$. For any $K>1$, there exists an $\varepsilon_0>0$, depending polynomially on $K$, such that for all initial data $(w_0,w_1)$ satisfying
		\begin{align}
			&\sum_{0\leq |I|\leq N}\|\langle x\rangle^{|I|}\nabla\nabla^Iw_0\|_{L^2}+\sum_{0\leq |I|\leq N}\|\langle x\rangle^{|I|}\nabla^Iw_1\|_{L^2}\leq K,\label{est:initial1}\\
			&\sum_{0\leq |I|\leq N-1}\|\langle x\rangle^{|I|}\nabla\nabla\nabla^Iw_0\|_{L^2}+\sum_{0\leq |I|\leq N-1}\|\langle x\rangle^{|I|}\nabla\nabla^Iw_1\|_{L^2}\leq\varepsilon<\varepsilon_0,\label{est:initial2}
		\end{align}
	the Cauchy problem~\eqref{equ:Wave}--\eqref{est:initial} admits a global-in-time solution $w$ satisfying
	\begin{equation}
		|\partial w(t,x)|\leq C\langle t+|x|\rangle^{-1}\langle t-|x|\rangle^{-\frac{1}{2}},\quad
		|\partial\partial w(t,x)|\leq C\varepsilon\langle t+|x|\rangle^{-1}\langle t-|x|\rangle^{-\frac{1}{2}},
	\end{equation}
    and the ghost weight energy is uniformly bounded, i.e.,
    \begin{equation}\label{est:ghostenergy}
    	\sum_{|I|\leq N}\mathcal{E}_{gst}^{\frac{1}{2}}(t,\Gamma^Iw)\leq C,
    \end{equation}
    for some constant $C$ independent of $t$. In addition, the solution $w$ scatters linearly.
	\end{theorem}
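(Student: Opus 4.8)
I would prove Theorem~\ref{thm:main1} by a continuity/bootstrap argument based on the Klainerman vector fields $\Gamma\in\{\partial_\alpha,\ \Omega_{ab}=x_a\partial_b-x_b\partial_a,\ \Omega_{0a}=x_a\partial_t+t\partial_a,\ S=t\partial_t+x^a\partial_a\}$ and on Alinhac's ghost-weight multiplier $e^{q}$ with $q=q(t-|x|)$ bounded and $q'=\langle t-|x|\rangle^{-1-\delta}>0$, so that the energy identity produces the nonnegative space--time (ghost) term $\int e^q q'\,|\bar\partial u|^2$, where $\bar\partial$ denotes the good derivatives $\{\partial_t+\partial_r,\ r^{-1}\Omega\}$. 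The hypotheses are genuinely two-tiered: \eqref{est:initial1} makes the full first-derivative (weighted) energy of $w$ of size $K$, while \eqref{est:initial2} makes the full energy of $\partial w$ — equivalently, all the second derivatives of $w$ — of size $\varepsilon$. I would therefore propagate two hierarchies
\[
\mathcal X_N(t)=\sum_{|I|\le N}\mathcal E_{gst}(t,\Gamma^I w),\qquad
\mathcal Y_{N-1}(t)=\sum_{|I|\le N-1}\mathcal E_{gst}(t,\Gamma^I\partial w),
\]
together with the associated conformal (i.e.\ $\langle t-|x|\rangle$-weighted) energies $\mathcal X_N^{\mathrm{conf}}$, $\mathcal Y_{N-1}^{\mathrm{conf}}$, under bootstrap assumptions $\mathcal X_N(t)\le C_1$, $\mathcal Y_{N-1}(t)\le C_1\varepsilon^2$, $\mathcal X_N^{\mathrm{conf}}(t)\le C_1\langle t\rangle^{C_1\varepsilon}$ and $\mathcal Y_{N-1}^{\mathrm{conf}}(t)\le C_1$ (the constant $C_1$ allowed to depend on $K$), on a maximal interval $[0,T)$. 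As preliminaries I would run the local theory with its continuation criterion, and translate \eqref{est:initial1}--\eqref{est:initial2} into the corresponding bounds at $t=0$, using that on $\{t=0\}$ the fields $\Gamma^I$ reduce — after converting $\partial_t$-derivatives through the equation — to weighted spatial derivatives $\langle x\rangle^{|I|}\nabla^{|I|}$, together with Hardy's inequality for the weights.

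\textbf{Decay estimates.} Under the bootstrap, the Klainerman--Sobolev inequality gives $|\partial\Gamma^I w(t,x)|\lesssim\langle t+|x|\rangle^{-1}\langle t-|x|\rangle^{-1/2}$ for $|I|\le N-2$ and $|\partial\Gamma^I\partial w(t,x)|\lesssim\varepsilon\langle t+|x|\rangle^{-1}\langle t-|x|\rangle^{-1/2}$ for $|I|\le N-3$; in particular the two pointwise bounds of the theorem hold, with $C$ allowed to depend on $K$. Two refinements are essential. First, the ghost space--time integrals $\int_0^t\!\!\int e^q q'|\bar\partial\Gamma^I w|^2\lesssim\mathcal X_N$ (and the $\partial w$-analogue) give weighted $L^2_{t,x}$ control of the good derivatives even at top order, where no pointwise decay is available. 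Second, the conformal energies provide — via weighted Klainerman--Sobolev — improved decay near the light cone, and, more importantly, allow one to replace a $\langle t-|x|\rangle^{-1/2}$-decaying $L^\infty$ factor by a conformal-energy-controlled $\langle t-|x|\rangle^{+1}$-weighted $L^2$ factor; it is precisely this trade, combined with $q'=\langle t-|x|\rangle^{-1-\delta}$, that converts the natural but non-integrable $\langle t\rangle^{-1}$ feedback into an integrable one and yields uniform-in-time bounds.

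\textbf{Energy estimates.} For $|I|\le N$ one has $-\Box\Gamma^I w=\Gamma^I\!\big(P^{\gamma\alpha\beta}\partial_\gamma w\,\partial_\alpha\partial_\beta w+\partial^\alpha w\,\partial_\alpha w\big)+[\Box,\Gamma^I]w$, and the ghost-weight identity reduces the estimate to controlling $\int_0^t\!\!\int e^q|\partial\Gamma^I w|\,|\Gamma^I(\mathrm{RHS})|$; the $\mathcal Y$-hierarchy is handled analogously from $-\Box\Gamma^I\partial w=\partial\Gamma^I(\mathrm{RHS})$. Both nonlinear terms carry null structure. For the semilinear term, commuting with $\Gamma^I$ yields finite sums of $Q_0$-type forms in $(\Gamma^{I_1}w,\Gamma^{I_2}w)$; using $|Q_0(u,v)|\lesssim|\bar\partial u||\partial v|+|\partial u||\bar\partial v|$ one puts the good derivative on the higher-order factor, absorbs it with the ghost term, and estimates the remaining low-order factor by its $L^\infty$ decay against the conformal-weighted $L^2$ norm (the latter being $\mathcal X^{\mathrm{conf}}$); the net contribution is $\lesssim\int_0^t\langle s\rangle^{-2}\mathcal X^{\mathrm{conf}}_N(s)\,ds$, which converges as long as $\mathcal X^{\mathrm{conf}}_N$ grows slower than $\langle s\rangle$ — which the conformal estimate confirms. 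For the quasilinear term, the hypothesis $P^{\gamma\alpha\beta}\xi_\gamma\xi_\alpha\xi_\beta=0$ on the null cone means that the coefficient $h^{\alpha\beta}=P^{\gamma\alpha\beta}\partial_\gamma w$ of the perturbed operator $\Box_g=\Box+h^{\alpha\beta}\partial_\alpha\partial_\beta$ has its dangerous (doubly-bad) null-frame component built only from good derivatives of $w$, while $\partial h\sim\partial^2 w=O(\varepsilon)$; its contribution to $\mathcal X_N$ is therefore absorbable, and to $\mathcal Y_{N-1}$ it produces only $O(\varepsilon)$-small terms. Since every term feeding back into $\mathcal Y_{N-1}$ is either genuinely $O(\varepsilon^2)$ or of the form $K^{c}\varepsilon^2\times(\text{integrable time decay})$, choosing $\varepsilon_0$ small (polynomially in $K$) closes $\mathcal Y_{N-1}(t)\lesssim\varepsilon^2$, and the $\mathcal X$-estimate closes $\mathcal X_N(t)\le C(K)$ uniformly in $t$; running the conformal estimates in parallel and invoking the continuation criterion extends the solution to $[0,\infty)$ and yields \eqref{est:ghostenergy}.

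\textbf{Main obstacle; scattering.} The crux — and the reason the energy estimates must be \emph{refined} — is the top-order bound for the \emph{large} energy $\mathcal X_N$: in $Q_0(\Gamma^N w,w)$ the low-order factor $\partial w$ is only $O(K)$, not $O(\varepsilon)$, so the crude estimate (pulling $\|\partial w\|_{L^\infty}$ out of the $L^2$ pairing) leaves a $K\langle s\rangle^{-1}$ feedback and hence polynomial-in-$t$ growth; removing even this growth requires the ghost-weight/conformal-energy exchange described above, and a careful check that the high-order conformal energy grows no faster than $\langle t\rangle^{C_1\varepsilon}$ (with $C_1\varepsilon<1$), while the \emph{lower-order} conformal energy stays uniformly bounded — this last statement, proved by the same machinery with more factors placed in $L^\infty$, is the additional conclusion announced in the abstract. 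Finally, the global bounds imply $\|\Box\Gamma^I w(s)\|_{L^2_x}\in L^1_s$ for $|I|\le N-1$, so by Duhamel's formula and Cauchy's criterion $(\Gamma^I w,\partial_t\Gamma^I w)(t)$ converges in $\dot H^1\times L^2$ as $t\to\infty$; the limit is the Cauchy data of a free wave $w^{\mathrm{lin}}$, and $w(t)-w^{\mathrm{lin}}(t)\to0$ in the energy norm, which is the asserted linear scattering.
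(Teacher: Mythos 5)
Your high-level architecture (bootstrap on ghost-weight energies for $\Gamma^I w$ and $\Gamma^I\partial w$, commutators, null-form decomposition, Klainerman--Sobolev decay, absorption of the good derivative into the ghost space--time term) matches the paper's, and your scattering step is the same as the paper's use of Lemma~\ref{lem:scatterWave}. The difficulty you correctly identify — that the coefficient $\partial w$ in the top-order null-form pairing is only $O(K)$ — is indeed the crux. But your proposed mechanism for closing the top-order bound is different from the paper's, and it has two genuine gaps.

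First, you bootstrap the conformal energies $\mathcal X_N^{\mathrm{conf}}$, $\mathcal Y_{N-1}^{\mathrm{conf}}$ together with the ghost energies, and you use the conformal weight to ``trade'' an $L^\infty$ factor for a $\langle t-r\rangle$-weighted $L^2$ factor. The problem is that the hypotheses \eqref{est:initial1}--\eqref{est:initial2} of Theorem~\ref{thm:main1} do \emph{not} give $\mathcal E_{con}(0,\Gamma^I w)<\infty$: $\mathcal E_{con}$ contains the $\|w\|_{L^2}^2$ term, and $\mathcal E_{con}(0,\Gamma^I w)$ also requires the weights to go one order higher than the derivatives, e.g.\ $\|r\nabla w_0\|_{L^2}$ at $|I|=0$; neither is controlled by \eqref{est:initial1}, which only bounds $\|\langle x\rangle^{|I|}\nabla\nabla^I w_0\|$. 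This is precisely why the paper postpones every use of the conformal energy to Theorem~\ref{thm:main2}, under the \emph{strictly stronger} hypothesis \eqref{est:initialconf}. Theorem~\ref{thm:main1} is proved in the paper using the ghost-weight energy alone, with no conformal ingredient in the bootstrap. Moreover, even when the conformal energy is finite, the ``rough'' bound it admits under the large-data bootstrap is $\langle t\rangle^{1/2+\delta}$, not the almost-bounded $\langle t\rangle^{C_1\varepsilon}$ you posit; the $\langle t\rangle^{C_1\varepsilon}$ rate comes from Gronwall with an $O(\varepsilon)$ coefficient, but here the coefficient is $O(K)$.

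Second, and independently, the closing of $\mathcal X_N$ requires a strictly smaller right-hand side than the bootstrap bound $C_1^2$; since the initial energy is already $\sim K^{2N+2}$ and the trilinear feedback is naively $\sim C_1^3$, one must gain a positive power of $\varepsilon$ in the nonlinear contribution. Your ``$L^\infty$-to-conformal-$L^2$ trade'' converts a nonintegrable $\langle s\rangle^{-1}$ into an integrable decay but produces a feedback of size $O(KC_1^2)$, with no small factor, so the bootstrap does not close. The paper's device for the $\varepsilon$-gain is different and essential: since $\partial\partial\Gamma^I w$ is $O(\varepsilon)$ while $\partial\Gamma^I w$ is only $O(1)$, integrating from $r=\infty$ and interpolating (Lemma~3.2 and estimates \eqref{est:partialrefined}, \eqref{est:Ggamma}) gives, e.g., $|\partial\Gamma^I w|\lesssim C_1\varepsilon^{\lambda/2}\langle t+r\rangle^{-\lambda/2-1/2+\delta}\langle t-r\rangle^{-1+\lambda}$ and $|G\Gamma^I w|\lesssim C_1\varepsilon^{1/4}\langle t+r\rangle^{-5/4+\delta}$, so every trilinear block carries an explicit $\varepsilon^{1/4}$ (or better); this is what makes the top-order bound $K^{2N+2}+C_1^3\varepsilon^{1/4}\le C_1^2$ closable with $C_1\sim K^{N+1}$ and $\varepsilon_0$ polynomial in $K$. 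Your proposal has no substitute for this interpolation step, and without it I do not see how to close the continuity argument for Theorem~\ref{thm:main1}. (For the same reason, your closing remark that ``this same machinery'' gives the uniform lower-order conformal bound overstates the case: the paper proves that only under \eqref{est:initialconf}, and via the Fourier $L^2$ estimate of Lemma~\ref{lem:L2}, which needs $L^1$-type data control not present in \eqref{est:initial1}--\eqref{est:initial2}.)
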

	
	\begin{remark}
		Nontrivial initial data $(w_0,w_1)$ satisfying the conditions~\eqref{est:initial1} and~\eqref{est:initial2} do exist. 
		For instance, one may take the initial data of the form~\cite{DoLiZhao23}
		\begin{equation*}
			\nabla w_0(x)=\frac{f_{\varepsilon}(x)+\varepsilon f(x)}{\|f_{\varepsilon}(x)+\varepsilon f(x)\|_{L^2}},\qquad 
			w_1(x)=\frac{g_{\varepsilon}(x)+\varepsilon g(x)}{\|g_{\varepsilon}(x)+\varepsilon g(x)\|_{L^2}},
		\end{equation*}
		where $f,g$ are arbitrary smooth functions with sufficiently fast decay at spatial infinity, and
		\[
		f_{\varepsilon}(x)=\varepsilon^{\frac{3}{2}}f(\varepsilon x), \qquad 
		g_{\varepsilon}(x)=\varepsilon^{\frac{3}{2}}g(\varepsilon x).
		\]
		Such a choice ensures that the weighted norms in~\eqref{est:initial1} remain bounded by $K$, while those in~\eqref{est:initial2} can be made arbitrarily small by choosing $\varepsilon>0$ sufficiently small.
	\end{remark}

    \begin{theorem}\label{thm:main2}
    	In addition to the assumptions of Theorem~\ref{thm:main1}, suppose further that the initial data 
    	$(w_0,w_1)$ satisfy
    	\begin{equation}\label{est:initialconf}
    		\begin{aligned}
    			\sum_{0\leq |I|\leq N}&\|\langle x\rangle^{|I|}\nabla^{I}w_0\|_{L^2}+\sum_{0\leq |I|\leq N-1}\|\langle x\rangle^{|I|+1}\nabla^{I}w_1\|_{L^2}\\
    			&+\sum_{1\leq |I|\leq N-1}\|\langle x\rangle^{|I|}\nabla\nabla^{I}w_0\|_{L^1}+\sum_{0\leq |I|\leq N-1}\|\langle x\rangle^{|I|}\nabla^{I}w_1\|_{L^1}<+\infty.
    		\end{aligned}
    	\end{equation}
    	Then the corresponding global solution $w$ given by Theorem~\ref{thm:main1}$\,$ enjoys the improved pointwise decay estimates
    	\begin{equation}
    		\left|w(t,x)\right|\leq C\langle t+|x|\rangle^{-1}\langle t-|x|\rangle^{-\frac{1}{2}},\quad 
    		\left|\partial w(t,x)\right|\leq C\langle t+|x|\rangle^{-1}\langle t-|x|\rangle^{-\frac{3}{2}},
    	\end{equation}
    	and the conformal energy remains uniformly bounded for all $t\geq 0$, i.e.,
    	\begin{equation}\label{est:conenerg}
    		\sum_{|I|\leq N-3}\mathcal{E}_{con}^{\frac{1}{2}}(t,\Gamma^Iw)\leq C,
    	\end{equation}
    	for some constant $C$ independent of $t$.
    \end{theorem}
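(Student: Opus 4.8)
The plan is to establish the conformal–energy bound \eqref{est:conenerg} first, by a bootstrap, and then to read off the improved pointwise decay as an essentially elementary consequence. So I would work on the maximal interval $[0,T)$ of validity under the bootstrap assumption
\[
\sum_{|I|\le N-3}\mathcal{E}_{con}^{\tfrac12}(t,\Gamma^Iw)\le 2C_1,\qquad t\in[0,T),
\]
and aim to recover the same bound with $C_1$ in place of $2C_1$; a continuity argument then gives $T=\infty$ and \eqref{est:conenerg}. The first ingredient is the conformal multiplier identity: for $\Box u=F$, contracting the energy–momentum tensor of $u$ with the conformal Killing field $K_0=(t^2+|x|^2)\partial_t+2t\,x^a\partial_a$ and the correction $2tu$, and integrating, gives $\tfrac{d}{dt}\mathcal{E}_{con}(t,u)\lesssim\int_{\R^3}|\mathcal{M}u|\,|F|\,dx$, where $|\mathcal{M}u|\lesssim \langle s+|x|\rangle^2|\bar\partial u|+\langle s-|x|\rangle^2|\partial u|+|u|$ with $\bar\partial$ the good (null) derivatives, and each of these three weighted quantities is $\lesssim\mathcal{E}_{con}^{1/2}(s,u)$ in $L^2_x$. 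The essential feature I would exploit is that the large weight $\langle s+|x|\rangle^2$ multiplies the \emph{small} quantity $\bar\partial u$, while the weight $\langle s-|x|\rangle^2$ multiplying $\partial u$ is itself small near the light cone. Applying this with $u=\Gamma^Iw$, $|I|\le N-3$, the initial term $\mathcal{E}_{con}^{1/2}(0,\Gamma^Iw)$ is finite and controlled by \eqref{est:initialconf} together with \eqref{est:initial1} (the weights $\langle x\rangle^{|I|}$ on $\nabla^Iw_0$ and $\langle x\rangle^{|I|+1}$ on $\nabla^Iw_1$ are exactly what the conformal energy of $\Gamma^Iw$ sees after restriction to $t=0$).

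Next I would commute: since each $\Gamma$ commutes with $\Box$ up to a multiple of $\Box$, and since the null form $P^{\gamma\alpha\beta}\partial_\gamma\partial_\alpha\partial_\beta$ and $Q_0(\phi,\psi)=\partial^\alpha\phi\,\partial_\alpha\psi$ reproduce themselves under $\Gamma$, we get $\Box\Gamma^Iw=\sum_{|J|+|K|\le|I|}\big(P^{\gamma\alpha\beta}_{JK}\partial_\gamma\Gamma^Jw\,\partial_\alpha\partial_\beta\Gamma^Kw+Q_0^{JK}(\Gamma^Jw,\Gamma^Kw)\big)$, each $P_{JK}$ still obeying the cubic null condition. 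Here I would use that the cubic null condition forces the symmetrized coefficient to be $P^{(\gamma\alpha\beta)}_{JK}=m^{(\gamma\alpha}d^{\beta)}_{JK}$ (the cubic form vanishing on the irreducible quadric $\{m^{\mu\nu}\xi_\mu\xi_\nu=0\}$ is divisible by $m^{\mu\nu}\xi_\mu\xi_\nu$), while the remaining mixed-symmetry part is a $Q_{\mu\nu}$-type null form; consequently, after integration by parts and substitution of the equation — a normal-form / quadratic change of unknown $v=w+O(w^2)$ — the whole quadratic nonlinearity is recast as $\Box$ of a quadratic expression in $(w,\partial w)$ plus a genuinely cubic remainder, and the conformal energy of $w$ is equivalent to that of $v$. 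This step is what will eliminate the borderline contributions. In estimating the source I would place, via the null condition, at least one $\bar\partial$ on a factor, pair against the three pieces of $\mathcal{M}\Gamma^Iw$, use the pointwise decay of Theorem~\ref{thm:main1} (namely $|\partial\Gamma^Kw|\lesssim\langle s+|x|\rangle^{-1}\langle s-|x|\rangle^{-1/2}$ and $|\partial\partial\Gamma^Kw|\lesssim\varepsilon\langle s+|x|\rangle^{-1}\langle s-|x|\rangle^{-1/2}$ for $|K|\le N-3$, costing two Klainerman–Sobolev derivatives — hence the loss of three derivatives in \eqref{est:conenerg}) on the high-order factor, and on the low-order factors the uniform ghost-weight energy \eqref{est:ghostenergy}, the associated spacetime bound $\int_0^\infty\!\!\int_{\R^3}\langle s-|x|\rangle^{-1-\mu}|\bar\partial\Gamma^Jw|^2\,dx\,ds\lesssim1$, and the bootstrap bound on $\mathcal{E}_{con}^{1/2}(s,\Gamma^Jw)$; the $L^1$ hypotheses in \eqref{est:initialconf} enter only through the Duhamel/Kirchhoff representation, to control $\|\Gamma^Jw(s)\|_{L^2}$ and to supply the improved pointwise decay. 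With the cubic structure from the normal form, the extra $\langle s-|x|\rangle^2$ weight on the bad-derivative part of $\mathcal{M}$, and a splitting into interior / near-cone / exterior regions (Cauchy–Schwarz in $s$ against the ghost-weight spacetime bound in the near-cone region), I would reach $\int_{\R^3}|\mathcal{M}\Gamma^Iw|\,|\Box\Gamma^Iw|\,dx\lesssim \langle s\rangle^{-1-\delta}\big(C\varepsilon\,\sup_{|J|\le N-3}\mathcal{E}_{con}^{1/2}(s,\Gamma^Jw)^2+C\big)$ for some $\delta>0$, and integrating in $s$ closes the bootstrap once $C_1$ and $\varepsilon_0$ are fixed.

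For the improved pointwise decay I would then use \eqref{est:conenerg} directly: applying Klainerman–Sobolev to $\langle t-|x|\rangle\partial w$ and noting $|\Gamma\langle t-|x|\rangle|\lesssim\langle t-|x|\rangle$ gives $\langle t+|x|\rangle\langle t-|x|\rangle^{3/2}|\partial w|\lesssim\sum_{|a|\le2}\|\langle t-|x|\rangle\partial\Gamma^aw\|_{L^2}\lesssim\sum_{|a|\le2}\mathcal{E}_{con}^{1/2}(t,\Gamma^aw)\lesssim1$, i.e. $|\partial w|\lesssim\langle t+|x|\rangle^{-1}\langle t-|x|\rangle^{-3/2}$; and Klainerman–Sobolev applied to $w$ together with the bound on $\|\Gamma^{\le2}w\|_{L^2}$ gives $|w|\lesssim\langle t+|x|\rangle^{-1}\langle t-|x|\rangle^{-1/2}$ (this is where the $L^1$ conditions are genuinely used, via the representation formula, since $\|w\|_{L^2}$ is not controlled by the energies alone). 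For $N\ge5$ all these derivative counts sit within $N-3$.

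The hard part will be the source estimate. A naive use of the conformal-energy inequality (replacing $|\mathcal{M}u|$ by $\langle s+|x|\rangle|\partial u|$ and estimating $\|\langle s+|x|\rangle\,\Box\Gamma^Iw\|_{L^2}$) produces a borderline factor $\langle s\rangle^{-1}$ in time, coming entirely from the light-cone region $|x|\sim s$, hence only a logarithmic — after Gronwall, a small power $\langle t\rangle^{C\varepsilon}$ — growth of the conformal energy. Upgrading this to genuine uniform boundedness is the technical heart of the argument, and I expect it to require exactly the refinements above: keeping the full multiplier $\mathcal{M}$ rather than its crude bound, the reduction to a cubic nonlinearity (equivalently, a modified conformal energy), and the near-cone Cauchy–Schwarz against the ghost-weight spacetime estimate. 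Bookkeeping of the derivative hierarchy (so that only conformal energies of order $\le N-3$ appear on both sides) and of the interaction between this bootstrap and the pointwise-decay bootstrap is where the main care will be needed.
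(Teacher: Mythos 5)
Your proposal and the paper share the same starting diagnosis (the naive conformal-energy estimate gives only a borderline $\langle s\rangle^{-1}$ integrand near the light cone, hence at best $\langle t\rangle^{C\varepsilon}$ growth) and the same final step (Klainerman--Sobolev plus Lemma~\ref{lem:extra} converts uniform $L^2$ bounds into the claimed pointwise decay), but the mechanism you propose to beat the borderline is genuinely different from the paper's, and it has a gap.

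The paper does not run a bootstrap on $\mathcal{E}_{con}$, and it does not use a normal form. Its engine is Lemma~\ref{lem:L2}: a Fourier-side Duhamel estimate that controls $\|\Gamma^{I}w(t)\|_{L^2}$ directly by $\|u_0\|+\|u_1\|_{L^1\cap L^2}$ plus $\int_0^t(\langle s\rangle^{-\eta/2}\|f\|_{L^1}+\langle s\rangle^{\eta}\|f\|_{L^2})\,ds$. This is exactly where the $L^1$ hypotheses on the data enter, and it is why the weighted $L^1$ assumptions appear in \eqref{est:initialconf}. The argument is then a two-pass improvement: a rough bound $\sum_{|I|\le N-1}\mathcal{E}_{con}^{1/2}(t,\Gamma^Iw)\lesssim\langle t\rangle^{1/2+\delta}$ from the usual conformal estimate feeds $L^1$ and $L^2$ source bounds into Lemma~\ref{lem:L2} with $\eta=2/3$ to get $\|\Gamma^Iw\|_{L^2}\lesssim\langle t\rangle^{1/6+\delta}$ for $|I|\le N-1$; this improved input, re-inserted with $\eta=7/12$, yields a time-integrable source and hence uniform $\|\Gamma^Iw\|_{L^2}$ for $|I|\le N-2$. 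The conformal energy at order $N-3$ is then read off from $\mathcal{E}_{con}^{1/2}(t,\Gamma^Iw)\lesssim\sum_{|J|\le1}\|\Gamma^J\Gamma^Iw\|$. You correctly anticipate that the $L^1$ hypotheses feed a Duhamel-type representation, but you treat it as a side input for ``controlling $\|\Gamma^Jw\|_{L^2}$'', whereas in the paper it \emph{is} the main loop, and the whole borderline problem is sidestepped by never pushing the conformal multiplier beyond the first, lossy pass.

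The concrete gap in your proposal is the normal-form step. You want $v=w+O(w^2)$ with $\mathcal{E}_{con}(t,v)\sim\mathcal{E}_{con}(t,w)$ and a genuinely cubic remainder. For the $Q_0$ part this is the standard $v=w-\tfrac12w^2$, but this requires $\|w\|_{L^\infty}$ to be uniformly small to both invert the change of unknown ($\partial v=(1-w)\partial w$ degenerates when $w\sim1$) and absorb the $\|w\|_{L^\infty}\mathcal{E}_{con}^{1/2}(t,w)$ error in the energy comparison. Theorem~\ref{thm:main2} is a \emph{large-data} statement: only $\partial^2w$ is $\varepsilon$-small, while $\partial w$ is $K$-sized for arbitrary $K>1$, and the pointwise bound \eqref{est:gamma} only gives $|w|\lesssim C_1\langle t+r\rangle^{-1/2+\delta}$ with $C_1$ depending on $K$ -- so $\|w\|_{L^\infty_{t,x}}$ need not be small. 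This breaks the normal-form energy equivalence exactly in the regime the theorem is about. In addition, two smaller issues: your multiplier bound $|\mathcal{M}u|\lesssim\langle s+|x|\rangle^2|\bar\partial u|+\langle s-|x|\rangle^2|\partial u|+|u|$ overcounts one power of $\langle s+|x|\rangle$ against $\mathcal{E}_{con}^{1/2}$ (the correct $L^2_x$ bound is $\|\langle s+|x|\rangle\,\bar\partial u\|\lesssim\mathcal{E}_{con}^{1/2}(s,u)$, consistent with the $\|\langle s+r\rangle f\|$ weight in \eqref{est:Conformal}); and the reduction of the quasilinear $Q_P$ to a cubic remainder via $P^{(\gamma\alpha\beta)}=m^{(\gamma\alpha}d^{\beta)}$ is asserted but not carried out -- it produces terms $d^\beta\partial_\beta Q_0(w,w)$ and $\partial w\,\Box w$, and showing that all pieces reorganize into $\Box(\text{quadratic})+\text{cubic}$ compatibly with the $\Gamma^I$-commutation and the conformal multiplier is precisely where I would expect the plan to stall. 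I would recommend abandoning the normal form and instead making the Fourier/Duhamel $L^2$ estimate the primary tool, iterating it as in the paper.
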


	\begin{remark}
		
		The main contribution of this paper is establishing the uniform boundedness of the lower-order conformal energy in~\eqref{est:conenerg}. We recall that the natural (ghost weight) energy is defined as
		$$ \mathcal{E}_{gst}(t,w)=\int_{\R^{3}}((\partial_t w)^2+\sum_{a=1}^{3}(\partial_a w)^2)(t,x)\d x+\sum_{a=1}^{3}\int_{0}^{t}\int_{\R^{3}}\frac{|G_aw|^2}{\langle r-s\rangle^{1+2\delta}}\d x\d s,$$
		and the conformal energy is defined as
		$$ \mathcal{E}_{con}(t,w)=\int_{\R^{3}}((L_0 w)^2+w^2+\sum_{1\leq a<b\leq3}|\Omega_{ab} w|^2+\sum_{a=1}^{3}|L_a w|^2)(t,x)\d x,$$
		and one finds more details about the vector fields $L_0, \Omega_{ab}, L_a\ \mbox{and} \ G_a$ in Section \ref{Se:Pre}.
		We note, in general, it is more challenging to bound the conformal energy than the natural energy for wave equations (see~\eqref{est:ghost} and~\eqref{est:Conformal}), and thus it is a nontrivial task to establish the uniform boundedness of the conformal energy in~\eqref{est:conenerg}.
	\end{remark}
	
	\subsection{Relevant results}
	Many physical models can be described by nonlinear wave equations, such as Dirac equations, Einstein equations, and Maxwell equations. Whether a wave equation admits a global solution is a central question in this field, and it is therefore of vital importance to explore the long-time behavior of such solutions.
	In 1986, the global existence of solutions to 3D quasilinear wave equations with the null condition was first proved independently by Christodoulou \cite{Christodoulou86} and Klainerman \cite{Klainerman86}. Their pioneering results were obtained through different approaches: Christodoulou used the conformal mapping method, while Klainerman employed the vector field method \cite{Klainerman85}. In 1993, Christodoulou and Klainerman \cite{Christo-Klainer1993} first established a landmark theory on the global nonlinear stability of Minkowski space. Later, Lindblad and Rodnianski \cite{LindbladRodnianski2010} provided a new proof of the global stability of Minkowski spacetime in harmonic gauge.
	
	Due to the slow decay of the waves, the two-dimensional case is substantially more delicate. The problem of global existence for quasilinear wave equations with the null condition in two space dimensions remained open until 2001, except for the cases of cubic terms or rotational invariance. Alinhac \cite{Alinhac2001} obtained the global existence for 2D quasilinear wave equations with the null condition by constructing an approximate solution and the ghost weight method. However, the result in \cite{Alinhac2001} only provided an upper bound with polynomial growth in time for the top-order energy of solutions. Motivated by this, Cai, Lei, and Masmoudi \cite{Cai} in 2018 proved the uniform boundedness result for a class of quasilinear nonlinearities satisfying the strong null condition in the sense of Lei \cite{Lei}. Subsequently, Dong, LeFloch, and Lei~\cite{DoLeLe21} extended this result and established the uniform boundedness of the top-order energy in \cite{Alinhac2001} for general null nonlinearities by employing the hyperboloidal vector field method. More recently, Li \cite{Li21} developed a new strategy that avoids the use of Lorentz boost vector fields and proved the uniform boundedness of the highest order norm of the solution to the two-dimensional quasilinear wave equation with standard quadratic null-type nonlinearities. 
	
	On the other hand, the proof in \cite{Alinhac2001} required that the initial data have compact support. This restriction was later removed: in 2018, Cai, Lei, and Masmoudi \cite{Cai} proved global well-posedness for a 2D fully nonlinear wave equation with small but non-compactly supported initial data, and extended their result to a class of 2D quasilinear equations. In 2020, Hou and Yin \cite{Yin} first obtained global existence for general null quasilinear wave equations with non-compactly supported initial data in two dimensions by establishing a new class of weighted $L^\infty$-$L^\infty$ estimates. 
	
	Relevant results for the 1-dimensional case can be referred to \cite{LiMengni,Luli}. When the initial data are large, one can refer to \cite{Ding,Yu}.
	
	There are also various results concerning generalizations of the classical null condition. In 2003, Alinhac \cite{Alinhac2003} studied global existence for an example of 3D quasilinear wave equations violating the null condition (see also \cite{Lindblad2008}). Later, Lindblad and Rodnianski \cite{LindbladRodnianski2010} came up with the weak null condition, which generalizes the null condition; see also the KMS condition in \cite{Katayama2015} by Katayama, Matoba and Sunagawa. Another related generalization is the non-resonance condition introduced by Pusateri and Shatah \cite{PusateriShatah2013}.
	Keir \cite{Keir18} established global existence for a large class of quasilinear wave equations under the weak null condition, by applying the $p$-weighted energy method developed in \cite{DafermosRodnianski:rp}. His approach also applies to the coupled Einstein-Maxwell system in harmonic coordinates and Lorenz gauge, as well as to various other scalar wave equations. Very recently,  Dong, Li, and Yuan \cite{DoLiYu22} established the uniform boundedness of the total energy for a massless Dirac-Klein-Gordon system in $(1+3)$-dimensional Minkowski spacetime, which can be regarded as a weak null wave system. Their analysis relies on a new weighted conformal energy estimate that compensates for the slow decay of solutions. Some further results about the weak null condition can be referred to \cite{Katayama2015,Yoko,DengPusateri2020}.
	
	The wave system with multispeeds is also a topic of concern to many researchers. The global existence of solutions in nonlinear elastodynamics was established by Agemi \cite{Agemi2000} and Sideris \cite{Sideris2000}. Yokoyama \cite{Yokoyama2000} studied the global existence for three-dimensional systems of quasilinear wave equations with multispeeds. The global existence for 3D nonlinear wave equations with multispeeds under the null condition can be referred to \cite{Sideris2001}. 
	
	Last but not least, the precise pointwise behaviors for the wave equations have also drawn much attention. In 2020, Deng and Pusateri \cite{DengPusateri2020} considered a wave equation satisfying the weak null condition and showed the precise pointwise behavior of the differentiated solution near the light cone. Recently, Dong-Ma-Ma-Yuan \cite{DMMY22} demonstrated the precise behavior of the solution to \eqref{equ:Wave} with initial data prescribed on a hyperboloid.

	\subsection{Outline of the proof}
	In this subsection, we present the key points of the proof in the paper. It is well known that the global existence and uniform boundedness of the natural energy for small-data solutions to the Cauchy problem ~\eqref{equ:Wave}--\eqref{est:initial} have already been established in the literature~\cite{Wang,MaHu17}. In this paper, we obtain the global existence and the uniform boundedness of natural energy by using the vector field theory and bootstrap argument. The ghost weight method of Alinhac plays an important role in the proof of the uniform boundedness estimate on the higher-order natural energy (see also \cite{Wang}). A key challenge in establishing Theorem~\ref{thm:main1} is handling the nonlinearities induced by the large initial data.
	
	The most difficult part in Theorem \ref{thm:main2} lies in showing the uniform boundedness of the lower-order conformal energy. More specifically, we know that there is a $\langle t\rangle^{-1}$ decay which is critical in the estimate on the conformal energy (see (\ref{est:Conformal})). Therefore, we need to find a new way to prove the lower-order conformal energy is uniformly bounded. To achieve this, we apply the Fourier method to obtain a new $L^2$ estimate of the undifferentiated solutions to the wave equations (see (\ref{est:uL2})). Then we can obtain the higher-order weighted estimates on the solution by commuting the wave equation with the vector fields. Based on these estimates, we in turn complete the proof of uniform boundedness of  conformal energy up to order $N-3$. See Section \ref{Se:Con-energy} for more details.
	
	\subsection{Organization of the paper}
	The paper is organized as follows. In Section~\ref{Se:Pre}, we introduce the notation and basic estimates that will be used in Sections~\ref{Se:exist} and~\ref{Se:Con-energy}. In Section~\ref{Se:exist}, we establish the global existence and linear scattering of the solution to the Cauchy problem~\eqref{equ:Wave}--\eqref{est:initial}, and obtain the uniform boundedness of the ghost weight energy. Section~\ref{Se:Con-energy} is devoted to proving that the lower-order conformal energy is also uniformly bounded.
	
	\section{Notation and basic estimates}\label{Se:Pre}
	\subsection{Notation}
	Our problem is set in the $(1+3)$-dimensional spacetime $\R^{1+3}$. A spacetime point is denoted by $(t,x)=(x_{0},x_{1},x_{2},x_{3})$, with spatial radius $r=\sqrt{x_{1}^{2}+x_{2}^{2}+x_{3}^{2}}$. We set $\omega_{a}=\frac{x_{a}}{r}$ for $a=1,2,3$, and write $x=(x_{1},x_{2},x_{3})\in \R^{3}$. Spacetime indices are represented by Greek letters $\{\alpha,\beta,\dots\}$, while spatial indices are denoted by Roman letters $\left\{a,b,\dots\right\}$. 
	
	The Japanese bracket is defined by $\langle \rho\rangle=(1+|\rho|^{2})^{\frac{1}{2}}$ for $\rho\in \R$ (and more generally $\rho \in \R^3$). We write $A \lesssim B$ to mean that $A \leq C_0 B$ for some universal constant $C_0$. Similarly, $A \sim B$ denotes $A \lesssim B$ and $B \lesssim A$. Unless otherwise specified, we adopt the Einstein summation convention for repeated lower and upper indices. In what follows, we omit the subscript in $\|f\|_{L^2_x}$ whenever no confusion arises, for $f\in L^2(\mathbb{R}^3)$.
	
	Following Klainerman~\cite{Klainerman86}, we introduce the standard set of vector fields in $\R^{1+3}$
	\begin{enumerate}
		\item [(1)] Translations: $\partial_{\alpha}:=\partial_{x_{\alpha}}$, for $\alpha=0,1,2,3$.
		\item [(2)] Rotations: $\Omega_{ab}:=x_{a}\partial_{b}-x_{b}\partial_{a}$, for $1\leq a<b\leq3$.
		\item [(3)] Scaling vector field: $L_{0}:=t\partial_t +x^{a}\partial_{a}$.
		\item [(4)] Lorentz boosts: $L_{a}:=t\partial_{a}+x_{a}\partial_{t}$, for $a=1,2,3$.
	\end{enumerate}
	
	We define the ordered set of vector fields
	\begin{equation*}
			\Gamma=\left(\Gamma_{1},\dots,\Gamma_{11}\right)=\left(\partial,\Omega,L,L_0\right),
	\end{equation*}
	where 
	\begin{equation*}
		\begin{aligned}
			\partial=(\partial_0,\partial_1,\partial_2,\partial_3)=(\partial_t,\nabla),\ \ \Omega=(\Omega_{12},\Omega_{13},\Omega_{23}), \ \  L=(L_1,L_2,L_3).
		\end{aligned}
	\end{equation*}
Moreover, for any multi-index $I=(I_{1},\dots,I_{11})\in \mathbb{N}^{11}$, we denote
	$$ \Gamma^{I}=\prod_{k=1}^{11}\Gamma_{k}^{I_{k}}, \quad\mbox{where}\ \Gamma=(\Gamma_{1},\dots,\Gamma_{11}).$$
	
	We denote the nonlinear terms by
	$$Q_P(u,v) = P^{\gamma\alpha\beta}\partial_\gamma u\partial_\alpha\partial_\beta v,\quad Q_0(u,v) = \partial^\alpha u\partial_\alpha v.$$

	We also introduce the good derivatives
	$$ G_a=\frac{1}{r}(x_a\partial_t+r\partial_a)=\omega_{a}\partial_t+\partial_a,\quad \mbox{for}\ a=1,2,3.$$
	For future notational convenience, we denote
	\begin{equation*}
		G=(G_1,G_2,G_3).
	\end{equation*}
	 
	The Fourier transform of $f \in L^2(\R^3)$ is defined by
	$$ \mathcal{F}(f)(\xi)=\hat{f}(\xi)=\int_{\R^{3}}f(x)e^{-ix\cdot\xi}\d x.$$

	\subsection{Basic estimates}
	
	In this subsection, we recall several preliminary estimates related to the vector fields and the null forms. We also collect some standard analytic tools, including the Klainerman–Sobolev inequality, H\"{o}lder inequality, and Hardy inequality, which will be frequently used in the subsequent analysis.
	
	\begin{lemma}[\cite{Sogge}]\label{lem:commutators} For any smooth function $u=u(t,x)$, we have
		\begin{equation}\label{est:commutators}
			\sum_{\alpha=0}^{3}\left|[\partial_\alpha,\Gamma^I]u\right|\lesssim\sum_{|J|<|I|}\sum_{\beta=0}^{3}|\partial_\beta\Gamma^Ju|.
		\end{equation}
	\end{lemma}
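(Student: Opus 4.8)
The plan is to argue by induction on the order $|I|$, exploiting the elementary fact that the commutator of a translation $\partial_\alpha$ with a single generator $\Gamma_k$ is a \emph{constant-coefficient} combination of translations. First I would record the commutation identities: $[\partial_\alpha,\partial_\beta]=0$; $[\partial_\alpha,\Omega_{bc}]=\delta_{\alpha b}\partial_c-\delta_{\alpha c}\partial_b$ (which vanishes when $\alpha=0$); $[\partial_\alpha,L_0]=\partial_\alpha$; and $[\partial_t,L_a]=\partial_a$, $[\partial_b,L_a]=\delta_{ab}\partial_t$. In every case
\begin{equation*}
	[\partial_\alpha,\Gamma_k]=\sum_{\beta=0}^{3}c_{\alpha k}^{\beta}\,\partial_\beta
\end{equation*}
for constants $c_{\alpha k}^{\beta}$ independent of $(t,x)$; equivalently, $\Gamma_k\partial_\beta=\partial_\beta\Gamma_k-[\partial_\beta,\Gamma_k]$, so that a single translation can always be moved to the outside of a generator at the price of lower-order translation terms.

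For the induction, the case $|I|=0$ is trivial (the left-hand side vanishes) and the case $|I|=1$ is exactly the identity above. Suppose the claim holds for all multi-indices of order at most $m-1$, and let $|I|=m$; write $\Gamma^I=\Gamma_k\Gamma^{I'}$ with $|I'|=m-1$ and use the Leibniz rule for commutators,
\begin{equation*}
	[\partial_\alpha,\Gamma^I]u=[\partial_\alpha,\Gamma_k]\Gamma^{I'}u+\Gamma_k[\partial_\alpha,\Gamma^{I'}]u.
\end{equation*}
The first term equals $\sum_\beta c_{\alpha k}^{\beta}\partial_\beta\Gamma^{I'}u$, which is already of the required form since $|I'|<m$. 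For the second term, the induction hypothesis expresses $[\partial_\alpha,\Gamma^{I'}]u$ as a constant-coefficient combination of terms $\partial_\beta\Gamma^Ju$ with $|J|<m-1$; applying $\Gamma_k$ and then commuting it past each $\partial_\beta$ via $\Gamma_k\partial_\beta=\partial_\beta\Gamma_k-[\partial_\beta,\Gamma_k]$ produces terms $\partial_\beta\Gamma_k\Gamma^Ju$ with $|J|+1\leq m-1$ together with terms $[\partial_\beta,\Gamma_k]\Gamma^Ju=\sum_\gamma c_{\beta k}^{\gamma}\partial_\gamma\Gamma^Ju$ with $|J|<m-1$. Collecting all contributions, $\sum_\alpha|[\partial_\alpha,\Gamma^I]u|$ is dominated by a fixed linear combination of $|\partial_\beta\Gamma^Ju|$ with $|J|<m$, which is the claim.

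The argument is entirely routine and carries no genuine analytic obstacle; the only point requiring care is the bookkeeping — one must always push a translation derivative into the \emph{outermost} slot and check that the surviving string of generators has order strictly below $|I|$, which holds because each commutation step either deletes a generator or keeps the translation outermost while shortening the $\Gamma$-string by one. The implicit constant in $\lesssim$ depends only on $|I|$ and the dimension.
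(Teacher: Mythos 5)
The paper does not supply its own proof of this lemma; it is stated with a citation to Sogge and used as a black box. Your induction argument is the standard proof and is correct: all the commutation identities $[\partial_\alpha,\Gamma_k]=\sum_\beta c^\beta_{\alpha k}\partial_\beta$ are right, the Leibniz rule $[\partial_\alpha,\Gamma_k\Gamma^{I'}]=[\partial_\alpha,\Gamma_k]\Gamma^{I'}+\Gamma_k[\partial_\alpha,\Gamma^{I'}]$ is applied correctly, and the order bookkeeping ($|\Gamma_k\Gamma^J|=|J|+1\leq m-1<m$ for the surviving product, $|J|\leq m-2<m$ for the commutator remainder) closes the induction. One small remark: you implicitly strengthen the induction hypothesis from the stated inequality to the exact statement that $[\partial_\alpha,\Gamma^{I'}]u$ \emph{equals} a constant-coefficient combination of $\partial_\beta\Gamma^J u$ with $|J|<|I'|$; this strengthening is indeed necessary in order to apply $\Gamma_k$ to the output and propagate the structure, and it is harmless since the stronger claim is what the base case delivers. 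The constant in $\lesssim$ depends only on $|I|$, as you note.
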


	\begin{lemma}[\cite{Alinhac2009,Sogge}]\label{lem:extra}
		For any smooth function $u=u(t,x)$, we have
		\begin{equation}
			\langle t-r\rangle|\partial u|+\langle t+r\rangle|G_a u|\lesssim\sum_{|J|=1}|\Gamma^J u|,\quad\forall\ a=1,2,3.\label{est:ope}
		\end{equation}
	\end{lemma}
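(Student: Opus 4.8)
The plan is to pass to the null frame adapted to the light cone and estimate each frame component by the weight that is natural to it. Set $\partial_r:=\omega_a\partial_a$ and decompose an arbitrary first-order derivative as
\[
\partial_t=\tfrac12(\partial_t+\partial_r)+\tfrac12(\partial_t-\partial_r),\qquad
\partial_a=\tfrac{\omega_a}{2}(\partial_t+\partial_r)-\tfrac{\omega_a}{2}(\partial_t-\partial_r)+(\partial_a-\omega_a\partial_r),
\]
so that $|\partial u|\lesssim|(\partial_t+\partial_r)u|+|(\partial_t-\partial_r)u|+\sum_a|(\partial_a-\omega_a\partial_r)u|$, while
$G_a=\omega_a(\partial_t+\partial_r)+(\partial_a-\omega_a\partial_r)$
involves only the \emph{good} component $\partial_t+\partial_r$ and the angular component $\partial_a-\omega_a\partial_r$, and \emph{not} the bad component $\partial_t-\partial_r$. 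The bad direction will be controlled with the factor $\langle t-r\rangle^{-1}$, the good and angular ones with the stronger factor $\langle t+r\rangle^{-1}$, and this asymmetry is exactly what makes the two terms of the lemma come out with different weights.

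Concretely, I would first record the elementary identities, valid for $r>0$ and immediate from $L_0=t\partial_t+r\partial_r$, $\omega_aL_a=r\partial_t+t\partial_r$, $L_a=t\partial_a+r\omega_a\partial_t$, and $\Omega_{ab}=x_a\partial_b-x_b\partial_a$ after contracting with $\omega$:
\begin{gather*}
(t-r)(\partial_t-\partial_r)=L_0-\omega_aL_a,\qquad (t+r)(\partial_t+\partial_r)=L_0+\omega_aL_a,\\
r\,(\partial_a-\omega_a\partial_r)=-\omega_b\Omega_{ab},\qquad t\,(\partial_a-\omega_a\partial_r)=L_a-\omega_a\omega_bL_b.
\end{gather*}
The first identity gives $|t-r|\,|(\partial_t-\partial_r)u|\le|L_0u|+\sum_a|L_au|$, hence $\langle t-r\rangle|(\partial_t-\partial_r)u|\lesssim\sum_{|J|=1}|\Gamma^Ju|$; the second gives in the same way $\langle t+r\rangle|(\partial_t+\partial_r)u|\lesssim\sum_{|J|=1}|\Gamma^Ju|$. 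From the third and fourth, $r\,|(\partial_a-\omega_a\partial_r)u|\lesssim\sum|\Omega u|$ and $t\,|(\partial_a-\omega_a\partial_r)u|\lesssim\sum_b|L_bu|$; adding these two and using the trivial bound $|(\partial_a-\omega_a\partial_r)u|\le|\partial u|$ produces $\langle t+r\rangle\sum_a|(\partial_a-\omega_a\partial_r)u|\lesssim\sum_{|J|=1}|\Gamma^Ju|$.

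It then remains only to reassemble. Using the decomposition of $\partial$ above together with the elementary inequality $\langle t-r\rangle\le\langle t+r\rangle$ (valid since $0\le|t-r|\le t+r$), one obtains $\langle t-r\rangle|\partial u|\lesssim\sum_{|J|=1}|\Gamma^Ju|$; and since $G_a$ has no $(\partial_t-\partial_r)$-component, $\langle t+r\rangle|G_au|\lesssim\langle t+r\rangle|(\partial_t+\partial_r)u|+\langle t+r\rangle\sum_b|(\partial_b-\omega_b\partial_r)u|\lesssim\sum_{|J|=1}|\Gamma^Ju|$. The only point needing a word of care is the axis $r=0$, where the above identities degenerate: there $\langle t-r\rangle\sim\langle t+r\rangle\sim\langle t\rangle$ and $t\partial_au=L_au$, $G_au=\partial_tu+\partial_au$, so both estimates are immediate (or one just invokes continuity, $u$ being smooth). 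I expect the only slightly delicate step to be this coupling of the $t$- and $r$-weights for the angular derivative — splitting $\langle t+r\rangle$ between the $\Omega$-bound and the $L$-bound — while everything else is bookkeeping.
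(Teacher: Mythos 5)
Your proof is correct, and it takes a genuinely different route from the one the paper sketches. The paper writes the cartesian derivatives through the ``conformal factor'' $(t^2-r^2)$, namely $(t^2-r^2)\partial_t u=tL_0u-x^aL_au$, $(t^2-r^2)\partial_au=tL_au-x_aL_0u+(\text{rotations})$, and then handles $G_a$ ad hoc via the two identities $G_au=\tfrac1r(L_au+(r-t)\partial_au)=\tfrac1t(L_au-\omega_a(r-t)\partial_tu)$, feeding the just-proved bound on $\langle t-r\rangle|\partial u|$ back into them. You instead pass to the null frame $\{\partial_t-\partial_r,\ \partial_t+\partial_r,\ \partial_a-\omega_a\partial_r\}$ and attach to each frame component the weight native to it. Both are routine vector-field computations; your version has the advantage that the asymmetry of the two weights becomes structural rather than computational — $G_a=\omega_a(\partial_t+\partial_r)+(\partial_a-\omega_a\partial_r)$ simply has no $(\partial_t-\partial_r)$ component, so the stronger $\langle t+r\rangle$ weight is automatic, whereas the paper has to notice this by substituting a previously derived estimate. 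Your identities check out (e.g.\ $L_0\pm\omega_aL_a=(t\mp r)(\partial_t\pm\partial_r)$, $\omega_b\Omega_{ab}=-r(\partial_a-\omega_a\partial_r)$, $L_a-\omega_a\omega_bL_b=t(\partial_a-\omega_a\partial_r)$), and the reassembly, including the ``$+1$'' from the trivial bound $|(\partial_a-\omega_a\partial_r)u|\lesssim|\partial u|$ that upgrades $t+r$ to $\langle t+r\rangle$, is exactly right. One cosmetic remark: the worry about the axis $r=0$ is moot, since $G_a$ itself (and $\omega_a$) is only defined for $r>0$, so the inequality is implicitly stated away from that set; your claim that $G_au=\partial_tu+\partial_au$ at $r=0$ is not meaningful there, but nothing in the argument depends on it.
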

	
	\begin{proof}
		By an elementary computation, for $a=1,2,3$, one obtains
		\begin{equation*}
			\begin{aligned}
				(t^2-r^2)\partial_t u&=tL_0 u-x^aL_a u,\\
				(t^2-r^2)\partial_1u
				&=tL_1u-x_1L_0u+x_2\Omega_{12}u+x_3\Omega_{13}u,\\
				(t^2-r^2)\partial_2u
				&=tL_2u-x_2L_0u-x_1\Omega_{12}u+x_3\Omega_{23}u,\\
				(t^2-r^2)\partial_3u
				&=tL_3u-x_3L_0u-x_1\Omega_{13}u-x_2\Omega_{23}u,\\
				G_au&=\frac{1}{r}(L_a u+(r-t)\partial_a u)=\frac{1}{t}(L_a u-\frac{x_a}{r}(r-t)\partial_t u).
			\end{aligned}
		\end{equation*}
		Combining the above identities, we arrive at~\eqref{est:ope}.
	\end{proof}
	
	Lemma \ref{lem:commutators} provides the standard commutator estimates for vector fields, while Lemma \ref{lem:extra} gives the so-called "good derivative" control. We next recall the estimates related to the null forms.

	\begin{lemma}[\cite{Sogge}]\label{lem:P&Q}
		For any $I\in \mathbb{N}^{11},$ and any smooth functions $u(t,x),v(t,x)$, there hold
		
		\begin{align}
			&|Q_P(u,v)|\lesssim|Gu||\partial\partial v|+|\partial u||G\partial v|,\label{est:P1}\\
			&|Q_0(u,v)|\lesssim|Gu||\partial v|+|\partial u||Gv|,\label{est:Q_0}\\
			&|\Gamma^IQ_P(u,v)|\lesssim\sum_{|I_1|+|I_2|\leq|I|}|Q_P(\Gamma^{I_1}u,\Gamma^{I_2}v)|,\label{est:P2}\\
	        &|\Gamma^IQ_0(u,v)|\lesssim\sum_{|I_1|+|I_2|\leq|I|}|Q_0(\Gamma^{I_1}u,\Gamma^{I_2}v)|.\label{est:Q0_1}
		\end{align}
	\end{lemma}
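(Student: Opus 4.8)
The plan is to handle the four estimates in two blocks: the pointwise null-form bounds~\eqref{est:P1}--\eqref{est:Q_0} are algebraic consequences of the null condition obtained through the ``good/bad derivative'' splitting, while the commutator bounds~\eqref{est:P2}--\eqref{est:Q0_1} follow by induction on $|I|$ from the constant-coefficient structure of the commutators $[\Gamma_k,\partial_\gamma]$. For~\eqref{est:Q_0}, I would insert $\partial_a=G_a-\omega_a\partial_t$ into $Q_0(u,v)=-\partial_tu\,\partial_tv+\sum_{a}\partial_au\,\partial_av$; since $\sum_a\omega_a^2=1$, the two $\partial_tu\,\partial_tv$ contributions cancel and one is left with
\[
Q_0(u,v)=\sum_{a}G_au\,G_av-\sum_a\omega_a\bigl(G_au\,\partial_tv+\partial_tu\,G_av\bigr),
\]
each summand carrying a good derivative, so that~\eqref{est:Q_0} follows from $|\omega_a|\le1$ and $|Gu|\,|Gv|\lesssim|Gu|\,|\partial v|$.

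For~\eqref{est:P1}, the engine is again a frame decomposition: introduce the null covector $\ell=(1,-\omega_1,-\omega_2,-\omega_3)$, which satisfies $\ell_0^2=\sum_a\ell_a^2=1$, and write $\partial_\gamma=\ell_\gamma\partial_t+R_\gamma$ with $R_0=0$ and $R_a=G_a$. Substituting this for each of the three derivatives in $Q_P(u,v)=P^{\gamma\alpha\beta}\partial_\gamma u\,\partial_\alpha\partial_\beta v$ and expanding, the only contribution that does not manifestly carry a good-derivative factor (hence is not manifestly $\lesssim|Gu|\,|\partial\partial v|+|\partial u|\,|G\partial v|$) is the ``all bad'' term $P^{\gamma\alpha\beta}\ell_\gamma\ell_\alpha\ell_\beta\,\partial_tu\,\partial_t^2v$, which vanishes because $\ell$ lies on the light cone — together with the $O(r^{-1})$ corrections generated by differentiating the angular-dependent frame. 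To dispose of the latter I would record the normal form of a null tensor: writing $P^{\gamma\alpha\beta}=P^{(\gamma\alpha\beta)}+N^{\gamma\alpha\beta}$ with $N$ of vanishing total symmetrization, the null condition forces $P^{(\gamma\alpha\beta)}$ to be the total symmetrization of $m^{\gamma\alpha}b^{\beta}$ for some constant vector $b$, whence
\[
Q_P(u,v)=\tfrac23\,Q_0\!\bigl(u,\,b^\mu\partial_\mu v\bigr)+\tfrac23\,N^{\gamma\alpha\beta}\bigl(\partial_\gamma u\,\partial_\alpha\partial_\beta v-\partial_\alpha u\,\partial_\gamma\partial_\beta v\bigr)+\tfrac13\,\bigl(b^\mu\partial_\mu u\bigr)\,\Box v.
\]
The first two terms are bounded by~\eqref{est:Q_0} and by the elementary estimate $|\partial_\gamma u\,\partial_\alpha v-\partial_\alpha u\,\partial_\gamma v|\lesssim|Gu|\,|\partial v|+|\partial u|\,|Gv|$; the last involves $\Box v$, but in every application of the lemma $v$ is a derivative of the solution, so $\Box v$ reduces via the equation to quadratic terms and this contribution is absorbed (or, if one prefers, a harmless term $|\partial u|\,|\Box v|$ is retained on the right of~\eqref{est:P1}).

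For~\eqref{est:P2} and~\eqref{est:Q0_1}, I would induct on $|I|$. Each of the eleven vector fields satisfies $[\Gamma_k,\partial_\gamma]=-\sum_\mu(c_k)_\gamma^\mu\partial_\mu$ with \emph{constant} coefficients (explicitly $[\partial_\mu,\partial_\gamma]=0$, $[\Omega_{ab},\partial_c]=\delta_{bc}\partial_a-\delta_{ac}\partial_b$, $[\Omega_{ab},\partial_t]=0$, $[L_a,\partial_b]=-\delta_{ab}\partial_t$, $[L_a,\partial_t]=-\partial_a$, $[L_0,\partial_\gamma]=-\partial_\gamma$). Applying a single $\Gamma_k$ to $Q_P(u,v)$ via the Leibniz rule, the principal part is $Q_P(\Gamma_k u,v)+Q_P(u,\Gamma_k v)$ and the commutator corrections assemble into a single term $\widetilde P^{\gamma\alpha\beta}\partial_\gamma u\,\partial_\alpha\partial_\beta v$ with a new constant tensor $\widetilde P$; differentiating at $s=0$ the identity $P^{\gamma\alpha\beta}\eta_\gamma\eta_\alpha\eta_\beta=0$ applied to $\eta=(\phi_k^s)^*\xi$, where $\xi$ is a fixed light-cone covector and $\phi_k^s$ is the (conformal, hence cone-preserving) flow of $\Gamma_k$, shows that $\widetilde P$ again satisfies the null condition. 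Iterating, $\Gamma^IQ_P(u,v)$ is a finite linear combination, with $O(1)$ coefficients, of null forms of the same type evaluated at $(\Gamma^{I_1}u,\Gamma^{I_2}v)$ with $|I_1|+|I_2|\le|I|$, which is~\eqref{est:P2}; the identical argument — now using that each $\Gamma_k$ preserves $m$ up to a scalar, so that the commutator corrections to $m^{\alpha\beta}\partial_\alpha u\,\partial_\beta v$ reduce to a multiple of $Q_0(u,v)$ — yields~\eqref{est:Q0_1}.

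The step I expect to be the main obstacle is the bookkeeping inside~\eqref{est:P1}: tracking the angular, $O(r^{-1})$, contributions produced by the $\omega$-dependence of the null frame and confirming that, modulo the $\Box v$ term, they regroup into $|Gu|\,|\partial\partial v|+|\partial u|\,|G\partial v|$ — equivalently, establishing the normal form displayed above for an arbitrary null $Q_P$. By contrast,~\eqref{est:Q_0} is a two-line computation, and~\eqref{est:P2}, \eqref{est:Q0_1} are essentially formal once one observes that $[\Gamma_k,\partial_\gamma]$ has constant coefficients and that the light cone is invariant under each $\Gamma_k$.
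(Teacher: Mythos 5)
Your treatment of \eqref{est:Q_0}, \eqref{est:P2}, and \eqref{est:Q0_1} matches the paper's: the identity $\partial_a=G_a-\omega_a\partial_t$ together with $\sum_a\omega_a^2=1$ for the quadratic null form, and Leibniz plus induction for the commutation estimates (the paper does not spell out why the induced coefficients stay null, but your light-cone-invariance remark is a reasonable way to see it). The issue is confined to \eqref{est:P1}.

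There your first move — substituting $\partial_\gamma=\ell_\gamma\partial_t+R_\gamma$ into $Q_P$ and noting that the all-bad term carries the coefficient $P^{\gamma\alpha\beta}\ell_\gamma\ell_\alpha\ell_\beta=0$ — is precisely the paper's proof, and it closes. You abandon it out of concern for ``$O(r^{-1})$ corrections generated by differentiating the angular-dependent frame,'' but no such corrections arise: $\partial_a=G_a-\omega_a\partial_t$ is an exact operator identity, and in $P^{\gamma\alpha\beta}\partial_\gamma u\,\partial_\alpha\partial_\beta v$ it is only ever substituted for the outermost derivatives acting on $u$, on $\partial_\beta v$, and on $\partial_t v$; the coefficients $\omega_a$ are never themselves differentiated. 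After the substitution every surviving term visibly carries a factor $Gu$ or $G\partial v$ (using $|G_au|\lesssim|\partial u|$ and $[\partial_t,G_a]=0$), giving \eqref{est:P1} directly.

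The normal-form detour you take instead leaves a genuine gap. The leftover term $\tfrac13(b^\mu\partial_\mu u)\,\Box v$ cannot be discarded by appealing to the wave equation (the lemma is stated for arbitrary smooth $u,v$, and the paper applies it with $v=\Gamma^{I_2}w$ or $v=\partial\Gamma^{I_2}w$, whose $\Box$ is not simply the right-hand side of \eqref{equ:Wave}), and retaining $|\partial u||\Box v|$ on the right of \eqref{est:P1} changes the statement being proved. The term is in fact harmless, but for a reason you did not record: $\Box v$ is itself a good-derivative quantity, since
\[
\Box v=\sum_{a}G_a\partial_a v-\sum_{a}\omega_a G_a\partial_t v,
\]
so $|\Box v|\lesssim|G\partial v|$. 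Had you observed this, your detour would close; but it is both simpler and closer to the source to finish the direct substitution you began, as the paper does.
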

	
	\begin{proof}
		First, we observe that
		\begin{equation*}
			\begin{aligned}
				Q_P(u,v)=&P^{\gamma\alpha\beta}\partial_\gamma u\partial_\alpha\partial_\beta v\\
				=&P^{000}\partial_t u\partial_t\partial_t v+2P^{0a0}\partial_t u\partial_a\partial_t v+P^{c00}\partial_c u\partial_t\partial_t v\\
				&+P^{0ab}\partial_t u\partial_a\partial_b v+2P^{ca0}\partial_c u\partial_a\partial_t v+P^{cab}\partial_c u\partial_a\partial_b v,
			\end{aligned}
		\end{equation*}
		due to $P^{\gamma\alpha\beta}=P^{\gamma\beta\alpha}.$
		
		Based on the identity $\partial_a=G_a-\omega_{a}\partial_t$, we have
		\begin{equation*}
			\begin{aligned}
				P^{0a0}\partial_t u\partial_a\partial_t v=&P^{0a0}\partial_t uG_a \partial_t v-P^{0a0}\omega_{a}\partial_t u\partial_t \partial_t v,\\
				P^{c00}\partial_c u\partial_t\partial_t v=&P^{c00}G_c u\partial_t\partial_t v-P^{c00}\omega_{c}\partial_t u\partial_t\partial_t v,\\
				P^{0ab}\partial_t u\partial_a\partial_b v=&P^{0ab}\partial_t u G_a\partial_b v-P^{0ab}\omega_{a}\partial_t u\partial_tG_bv+P^{0ab}\omega_{a}\omega_{b}\partial_tu\partial_t\partial_tv,\\
				P^{ca0}\partial_c u\partial_a\partial_t v=&P^{ca0}G_c u\partial_a\partial_t v-P^{ca0}\omega_{c}\partial_t uG_a\partial_t v+P^{ca0}\omega_{c}\omega_{a}\partial_t u\partial_t\partial_t v,\\
				P^{cab}\partial_c u\partial_a\partial_b v=&P^{cab}G_c uG_a\partial_b v-P^{cab}\omega_{a}G_c u\partial_t\partial_b v-P^{cab}\omega_{c}\partial_t uG_a\partial_b v\\
				&+P^{cab}\omega_{c}\omega_{a}\partial_t u\partial_tG_b v-P^{cab}\omega_{c}\omega_{a}\omega_{b}\partial_t u\partial_t\partial_t v.
			\end{aligned}
		\end{equation*}
		The null condition leads us to
		\begin{equation*}
			\begin{aligned}
				&P^{000}\partial_t u\partial_t\partial_t v-2P^{0a0}\omega_{a}\partial_t u\partial_t \partial_t v-P^{c00}\omega_{c}\partial_t u\partial_t\partial_t v\\
				&+P^{0ab}\omega_{a}\omega_{b}\partial_tu\partial_t\partial_tv+2P^{ca0}\omega_{c}\omega_{a}\partial_t u\partial_t\partial_t v-P^{cab}\omega_{c}\omega_{a}\omega_{b}\partial_t u\partial_t\partial_t v\\
				=&P^{\gamma\alpha\beta}\xi_\gamma\xi_\alpha\xi_\beta\partial_tu\partial_t\partial_tv=0,
			\end{aligned}
		\end{equation*}
		where $\xi=(1,-\omega_1,-\omega_2,-\omega_3)$.
		
		Then based on $|G_au|\lesssim |\partial u|, [\partial_t,G_a]v=0$, we deduce
		\begin{equation*}
				|Q_P(u,v)|\lesssim|\partial u||G\partial v|+|Gu||\partial\partial v|.
		\end{equation*}
		This completes the proof of (\ref{est:P1}).
		
		Since $G_a=\omega_{a}\partial_t+\partial_a$, we obtain
		\begin{equation*}
			\begin{aligned}
				&Q_0(u,v)=-\partial_tu\partial_tv+\sum_a\partial_au\partial_av\\
				&=-\partial_tu\partial_tv+\sum_a(G_a-\omega_a\partial_t)u(G_a-\omega_a\partial_t)v\\
				&=\sum_a(G_auG_av-\omega_aG_au\partial_tv-\omega_a\partial_tuG_av).
			\end{aligned}
		\end{equation*}
		This yields estimate~\eqref{est:Q_0}.
		
		Furthermore, by applying the Leibniz rule together with an induction argument, we derive~(\ref{est:P2}) and~\eqref{est:Q0_1}.
	\end{proof}

    \begin{lemma}[\cite{Sogge}]\label{lem:P1}
    	For any smooth functions $f(t,x)$, $g(t,x)$, and $h(t,x)$, we have
    	\begin{align*}
    		&|P^{\gamma\alpha\beta}\partial_\alpha\partial_\gamma f\partial_\beta g|\lesssim|G\partial f||\partial g|+|\partial\partial f||Gg|;\\
    		&|P^{\gamma\alpha\beta}\partial_\gamma f\partial_\alpha g\partial_\beta h|\lesssim|Gf||\partial g||\partial h|+|\partial f||Gg||\partial h|+|\partial f||\partial g||Gh|.
    	\end{align*}
    \end{lemma}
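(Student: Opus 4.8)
The plan is to expand each expression over the Minkowski metric exactly as in the proof of Lemma~\ref{lem:P&Q}, isolate the single term in which every index is a time-index, and show that the null condition forces that term to vanish after substituting $\xi=(1,-\omega_1,-\omega_2,-\omega_3)$; all the remaining terms already carry at least one good derivative $G_a$ (or a spatial derivative rewritten via $\partial_a=G_a-\omega_a\partial_t$), and hence fit into the claimed bounds once we use $|G_a u|\lesssim|\partial u|$.

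For the first estimate, write $P^{\gamma\alpha\beta}\partial_\alpha\partial_\gamma f\,\partial_\beta g$ by splitting each index into $0$ and spatial parts, exactly mirroring the six-line expansion of $Q_P(u,v)$ given in the proof of Lemma~\ref{lem:P&Q}, but now with the single factor $\partial_\beta g$ in place of $\partial_\alpha\partial_\beta v$. Substituting $\partial_a=G_a-\omega_a\partial_t$ wherever a spatial derivative appears, one collects the purely-temporal contribution $P^{\gamma\alpha\beta}\xi_\gamma\xi_\alpha\xi_\beta\,\partial_t\partial_t f\,\partial_t g$ with $\xi=(1,-\omega_1,-\omega_2,-\omega_3)$, which vanishes by the null condition since $\xi_0^2=\sum_a\xi_a^2=1$. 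Every surviving term contains at least one $G_a$ acting either on $\partial f$ (giving $|G\partial f||\partial g|$) or on $g$ (giving $|\partial\partial f||Gg|$), using $[\partial_t,G_a]=0$ and $|G_a\cdot|\lesssim|\partial\cdot|$ to absorb the $\omega_a$ weights, which are bounded. This yields the first inequality.

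For the second estimate the argument is the same with one more factor: expand $P^{\gamma\alpha\beta}\partial_\gamma f\,\partial_\alpha g\,\partial_\beta h$ over the $0$/spatial split of the three indices $\gamma,\alpha,\beta$, replace each $\partial_a$ by $G_a-\omega_a\partial_t$, and peel off the fully temporal term $P^{\gamma\alpha\beta}\xi_\gamma\xi_\alpha\xi_\beta\,\partial_t f\,\partial_t g\,\partial_t h=0$. Each of the remaining terms has a $G_a$ on exactly one of the three factors, producing the three terms $|Gf||\partial g||\partial h|$, $|\partial f||Gg||\partial h|$, $|\partial f||\partial g||Gh|$, after bounding the $\omega$-weights and using $|G_a\cdot|\lesssim|\partial\cdot|$ once more.

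The only point requiring care — and the main (modest) obstacle — is the bookkeeping of the $\omega_a$ coefficients generated each time a spatial derivative is rewritten: one must check that after all substitutions the coefficient of $\partial_t\partial_t f\,\partial_t g$ (resp. $\partial_t f\,\partial_t g\,\partial_t h$) is precisely the cubic form $P^{\gamma\alpha\beta}\xi_\gamma\xi_\alpha\xi_\beta$ evaluated at the null covector $\xi$, so that it cancels identically. This is a finite, purely algebraic verification identical in structure to the one carried out for $Q_P$ above, and no new idea beyond the null condition and $|G_a\cdot|\lesssim|\partial\cdot|$ is needed.
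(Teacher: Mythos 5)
Your proposal is correct and follows exactly the paper's own (one-line) proof of Lemma~\ref{lem:P1}: substitute $\partial_a=G_a-\omega_a\partial_t$ index by index as in the proof of Lemma~\ref{lem:P&Q}, observe that the purely temporal contribution has coefficient $P^{\gamma\alpha\beta}\xi_\gamma\xi_\alpha\xi_\beta$ with $\xi=(1,-\omega)$ and vanishes by the null condition, and bound the surviving terms using $|G_a u|\lesssim|\partial u|$, $[\partial_t,G_a]=0$, and $|\omega_a|\leq1$. The bookkeeping caveat you raise is the right one, and it resolves exactly as in the paper's $Q_P$ computation since each substitution is performed only on outer derivatives and $\omega_a$ is time-independent.
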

    \begin{proof}
    	By applying $\partial_a=G_a-\omega_{a}\partial_t$ and using the null condition, we thus obtain the desired results.
    \end{proof}

    A key tool in establishing pointwise decay estimates is the Klainerman-Sobolev inequality~\cite{Sogge}, stated below.
	\begin{lemma}[Klainerman-Sobolev inequality]
		Let $u=u(t,x)$ be a sufficiently regular function that vanishes when $|x|$ is large. Then there holds
		\begin{equation}\label{est:Sobo}
			\langle t+r\rangle \langle t-r\rangle^{\frac{1}{2}}|u(t,x)| \lesssim \sum_{|I|\leq2}\|\Gamma^I u\|.
		\end{equation}
	\end{lemma}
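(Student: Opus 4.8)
The plan is to deduce \eqref{est:Sobo} from the ordinary Sobolev embedding $H^{2}(\R^{3})\hookrightarrow L^{\infty}(\R^{3})$ by applying it, at each point $(t,x)$, on a region whose shape is adapted to the light cone: the weights $\langle t+r\rangle$ and $\langle t-r\rangle^{\frac{1}{2}}$ will be produced by the Jacobian of the associated change of variables, while the vector fields $\Gamma$ will be exactly the (rescaled) coordinate fields of the new chart. So I would fix $(t,x)$, write $r=|x|$, $\lambda=1+t+r$, $q=t-r$, and distinguish three regimes according to $\lambda$ and the distance to the cone. When $\lambda\lesssim1$ both sides of \eqref{est:Sobo} have size $\sim\|u(t,\cdot)\|_{H^{2}}$ --- note $\partial_{1},\partial_{2},\partial_{3}$ are among the $\Gamma$'s --- so the inequality reduces to Sobolev on $\R^{3}$.

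Next, for $\lambda\gg1$ with $|q|\ge\lambda/10$ (the deep interior and exterior of the cone), I would apply Sobolev rescaled to the Euclidean ball $B(x,c\lambda)$ with $c$ a small absolute constant. On that ball $|t^{2}-|x'|^{2}|\gtrsim\lambda^{2}$, so the algebraic identities $(t^{2}-r^{2})\partial_{\alpha}u=\cdots$ of Lemma~\ref{lem:extra}, iterated once using the commutator estimate \eqref{est:commutators}, give $\lambda^{|\alpha|}|\partial^{\alpha}u|\lesssim\sum_{|I|\le|\alpha|}|\Gamma^{I}u|$ on $B$ for $|\alpha|\le2$. The rescaled Sobolev inequality then yields $\lambda^{3}|u(t,x)|^{2}\lesssim\sum_{|I|\le2}\|\Gamma^{I}u(t,\cdot)\|_{L^{2}}^{2}$, which is \eqref{est:Sobo} because $\lambda^{3}\sim\langle t+r\rangle^{2}\langle t-r\rangle$ in this regime.

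The delicate case is the remaining one: $\lambda\gg1$ and $|q|\le\lambda/10$, so $t\sim r\sim\lambda$. Here a Euclidean ball that stays close to the cone has radius only $\sim\langle q\rangle$, too small to generate the $\langle t+r\rangle$-weight, so I would instead use the anisotropic "box" $\mathcal{B}$ around $x$ obtained as the product of a spherical cap of angular radius $c$ (two directions of length $\sim r\sim\lambda$) with a radial slab of width $\sim\langle q\rangle$, written $\mathcal{B}=\Phi(B_{1})$ for an explicit near-affine diffeomorphism $\Phi$ of the unit ball. Then $|\det D\Phi|\sim\lambda^{2}\langle q\rangle$, and the three coordinate fields of $\Phi$ are $O(1)$-combinations of the rotations $\Omega_{ab}$ (the two angular directions) and of the weighted radial field $\langle q\rangle\partial_{r}=\langle q\rangle\,\omega^{a}\partial_{a}$ (the transversal direction). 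Since $\langle t-|x'|\rangle\sim\langle q\rangle$ on $\mathcal{B}$, Lemma~\ref{lem:extra} gives $\langle q\rangle|\partial_{r}f|\le\langle q\rangle|\partial f|\lesssim\sum_{|J|=1}|\Gamma^{J}f|$ there; together with \eqref{est:commutators} and the boundedness of the $\Omega_{ab}$ acting on the angular variables, this shows $|\partial_{y}^{\alpha}(u\circ\Phi)|\lesssim\sum_{|I|\le2}|\Gamma^{I}u|\circ\Phi$ for $|\alpha|\le2$. The rescaled Sobolev inequality on $B_{1}$ together with the change of variables then gives $\lambda^{2}\langle q\rangle\,|u(t,x)|^{2}\lesssim\sum_{|I|\le2}\|\Gamma^{I}u(t,\cdot)\|_{L^{2}}^{2}$, and $\lambda^{2}\langle q\rangle\sim\langle t+r\rangle^{2}\langle t-r\rangle$ near the cone, which proves \eqref{est:Sobo}.

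The hard part is exactly this near-cone regime: one must choose the box $\mathcal{B}$ (equivalently the chart $\Phi$) so that, simultaneously, the Jacobian has size $\lambda^{2}\langle q\rangle$ and no more than two admissible vector fields are spent to control the second $y$-derivatives --- the decisive point being that the "bad" transversal direction is handled not by a Lorentz boost but by the weighted plain derivative $\langle q\rangle\partial_{r}$, whose domination by $\Gamma$ on $\mathcal{B}$ is precisely the content of Lemma~\ref{lem:extra}. Everything else --- the Euclidean rescalings, the commutator bookkeeping, the comparisons such as $\lambda^{3}\sim\langle t+r\rangle^{2}\langle t-r\rangle$ --- is routine; alternatively, one may simply invoke the classical statement in~\cite{Sogge}.
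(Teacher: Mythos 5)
The paper supplies no argument of its own for this lemma --- its ``proof'' is the one-line pointer to Theorem~1.3 of Sogge~\cite{Sogge} --- and your final sentence offers exactly the same citation, so formally you and the paper coincide. Your self-contained sketch (splitting spacetime into a bounded region, a far-from-cone region where one rescales the Sobolev inequality on an isotropic ball of radius $\sim\lambda$, and a near-cone region where the ball is replaced by an anisotropic angular-times-radial box of measure $\sim\lambda^{2}\langle q\rangle$, with the chart derivatives dominated by $\sum_{|I|\le 2}|\Gamma^{I}u|$ via the identities underlying Lemma~\ref{lem:extra} together with the commutator bounds of Lemma~\ref{lem:commutators}) is the standard textbook proof of the Klainerman--Sobolev inequality and is correct as outlined.
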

	\begin{proof}
		For a proof of the Klainerman–Sobolev inequality, see Theorem 1.3 in \cite{Sogge}.
	\end{proof}

     For later use, we recall two standard inequalities: the H\"{o}lder inequality and the Hardy inequality.
     \begin{proposition}\label{prop:holder}
     	Let $0 <\lambda < 1$. Then
     	\begin{align*}
     		\||f|^{\lambda} |g|^{1-\lambda}\| \leq \|f\|^{\lambda}\|g\|^{1-\lambda}.
     	\end{align*}
     \end{proposition}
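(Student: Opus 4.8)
The plan is to obtain Proposition~\ref{prop:holder} as an immediate consequence of the classical Hölder inequality, applied to a pair of conjugate exponents read off from $\lambda$. Recalling the convention that $\|\cdot\|$ denotes the $L^2(\R^3)$ norm, I would first rewrite the square of the left-hand side as
\[
\bigl\||f|^{\lambda}|g|^{1-\lambda}\bigr\|^2=\int_{\R^3}|f|^{2\lambda}|g|^{2(1-\lambda)}\,\d x=\int_{\R^3}\bigl(|f|^2\bigr)^{\lambda}\bigl(|g|^2\bigr)^{1-\lambda}\,\d x .
\]
Since $0<\lambda<1$, the exponents $p=\tfrac1\lambda$ and $q=\tfrac1{1-\lambda}$ satisfy $p,q>1$ and $\tfrac1p+\tfrac1q=\lambda+(1-\lambda)=1$, so they are Hölder conjugates; applying Hölder's inequality to $(|f|^2)^{\lambda}$ and $(|g|^2)^{1-\lambda}$ gives
\[
\int_{\R^3}\bigl(|f|^2\bigr)^{\lambda}\bigl(|g|^2\bigr)^{1-\lambda}\,\d x\le\Bigl(\int_{\R^3}|f|^2\,\d x\Bigr)^{\lambda}\Bigl(\int_{\R^3}|g|^2\,\d x\Bigr)^{1-\lambda}=\|f\|^{2\lambda}\|g\|^{2(1-\lambda)} .
\]
Taking square roots yields the stated bound. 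Equivalently, one could note that $t\mapsto\log\|\,\cdot\,\|_{L^{1/t}}$-type interpolation, or simply the standard three-line/convexity form of Hölder, gives the same thing; the exponent bookkeeping is the only content.

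The only points that require (entirely routine) attention are the degenerate cases: if $\|f\|=0$ or $\|g\|=0$ then the corresponding factor forces $|f|^{\lambda}|g|^{1-\lambda}=0$ almost everywhere and the inequality is trivial, while if either norm is infinite the right-hand side is $+\infty$ and there is nothing to prove; in the remaining case all integrals above are finite and the computation is valid as written. I do not anticipate any genuine obstacle here — the statement is a convenient repackaging of Hölder interpolation that will be used later to trade integrability for decay rates, and the authors' proof is presumably a one-line invocation of Hölder with exponents $1/\lambda$ and $1/(1-\lambda)$, exactly as above.
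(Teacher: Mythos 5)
Your proof is correct. The paper does not actually supply a proof of Proposition~\ref{prop:holder}; it is stated without argument under the heading ``we recall two standard inequalities: the H\"older inequality and the Hardy inequality,'' so the intended justification is precisely the one you give: apply the classical H\"older inequality to $(|f|^2)^{\lambda}$ and $(|g|^2)^{1-\lambda}$ with conjugate exponents $1/\lambda$ and $1/(1-\lambda)$, then take square roots. Your treatment of the degenerate cases ($\|f\|$ or $\|g\|$ equal to $0$ or $\infty$) is routine but harmless, and your remark about how the lemma is used later (trading integrability against decay rates in the weighted bounds such as~\eqref{est:GGammaL2}) is accurate.
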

     
     \begin{proposition} \label{prop:hardy}
     	If $f\in H^1(\R^3)$ is smooth, then
     	\begin{align*}
     		\bigg \| \frac{f(x)}{|x|} \bigg\| \lesssim\|\partial_r f\|.
     	\end{align*} 
     \end{proposition}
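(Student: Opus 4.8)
The plan is to reduce the three-dimensional statement to a one-dimensional weighted estimate along radial lines and then integrate over the sphere. First I would pass to polar coordinates, writing $x = r\omega$ with $r = |x|$ and $\omega \in \mathbb{S}^2$, so that $\|f(x)/|x|\|_{L^2(\mathbb{R}^3)}^2 = \int_{\mathbb{S}^2}\int_0^\infty |f(r\omega)|^2\, \d r\, \d\omega$, using that the Jacobian $r^2$ exactly cancels the $r^{-2}$ from $1/|x|^2$. Similarly $\|\partial_r f\|_{L^2}^2 = \int_{\mathbb{S}^2}\int_0^\infty |\partial_r f(r\omega)|^2 r^2\, \d r\, \d\omega \ge \int_{\mathbb{S}^2}\int_0^\infty |\partial_r f(r\omega)|^2 \,\d r\, \d\omega$ is, in fact, larger; but the precise one-dimensional gain is more delicate than this crude bound, so I would instead argue directly in three dimensions as follows.

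For fixed $\omega$, set $\phi(r) = f(r\omega)$. Since $f$ is smooth and (being in $H^1(\mathbb{R}^3)$, after the usual density reduction) may be taken to decay at infinity, we may write $\phi(r)^2 = -\int_r^\infty \frac{d}{ds}\bigl(\phi(s)^2\bigr)\,\d s = -2\int_r^\infty \phi(s)\phi'(s)\,\d s$. Then
\begin{align*}
\int_0^\infty \phi(r)^2\,\d r &= -2\int_0^\infty \int_r^\infty \phi(s)\phi'(s)\,\d s\, \d r = -2\int_0^\infty s\,\phi(s)\phi'(s)\,\d s,
\end{align*}
where the interchange of the order of integration is justified by Fubini. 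By the Cauchy--Schwarz inequality,
\begin{align*}
\int_0^\infty \phi(r)^2\,\d r \le 2\Bigl(\int_0^\infty \phi(s)^2\,\d s\Bigr)^{\frac12}\Bigl(\int_0^\infty s^2\,\phi'(s)^2\,\d s\Bigr)^{\frac12},
\end{align*}
so that $\int_0^\infty \phi(r)^2\,\d r \le 4\int_0^\infty s^2\,\phi'(s)^2\,\d s$. Integrating this inequality over $\omega \in \mathbb{S}^2$ and recalling $\phi'(s) = (\partial_r f)(s\omega)$ yields $\int_{\mathbb{R}^3}\frac{|f(x)|^2}{|x|^2}\,\d x \le 4\int_{\mathbb{R}^3}|\partial_r f(x)|^2\,\d x$, which is the claimed bound (with an explicit constant, even better than the implied-constant version stated).

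The main point to be careful about is the boundary behavior and the density/regularity reduction: for a general $f \in H^1(\mathbb{R}^3)$ one first approximates by functions that are smooth and compactly supported away from the origin to guarantee that $\phi(r)^2 \to 0$ as $r \to \infty$ and that no contribution arises at $r = 0$ (the weight $s$ in $s\,\phi(s)\phi'(s)$ vanishes there, so the origin is harmless), and then passes to the limit using the uniform constant $4$. I expect this density argument, together with checking that the trace of $f$ on rays is well-defined after the reduction, to be the only nonroutine ingredient; everything else is the one-dimensional computation above combined with Fubini and Cauchy--Schwarz.
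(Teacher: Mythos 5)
Your proof is correct. The paper states Proposition~\ref{prop:hardy} without proof, treating it as a standard fact, so there is no in-paper argument to compare against; what you have written is precisely the standard derivation of the radial Hardy inequality (reduce to one dimension along each ray, write $\phi(r)^2$ as a tail integral, swap the order of integration, apply Cauchy--Schwarz, and absorb), and it yields the sharp constant $4$. The one point worth being slightly more careful about, which you already flag, is that the Fubini step requires absolute integrability of $s\,\phi(s)\phi'(s)$; this is not circular once you first run the computation for $f$ smooth and compactly supported (where all integrals are finite and $\phi(s)\to 0$ as $s\to\infty$ holds trivially) and then pass to general $f\in H^1$ by density, exactly as you propose. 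A minor remark: your weaker preliminary claim that $\|\partial_r f\|_{L^2}^2 \ge \int_{\mathbb{S}^2}\int_0^\infty |\partial_r f(r\omega)|^2\,\d r\,\d\omega$ is false as stated (the factor $r^2$ in the Jacobian can be less than $1$ near the origin), but you correctly discard that crude route and it plays no role in the actual argument.
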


     \subsection{Estimates on 3D wave equations}
	Various classical and recent results on wave equations can be found in \cite{Alinhac2009,Katayama2017,LiZhou,Sogge}. First, we recall several energy estimates for the 3D wave equation that will be used in later sections.
	\begin{lemma}\label{lem:energy}
		Let $u=u(t,x)$ be the solution to the Cauchy problem
		$$\left\{\begin{aligned}
			-\Box u(t,x)&=f(t,x),\\
			(u,\partial_t u)|_{t=0}&=(u_0,u_1),
		\end{aligned}\right.$$
		
		with $f(t,x)$ a sufficiently regular function. Then the following estimates hold.
		\begin{enumerate}
			
			\item\cite{Alinhac2001} {\rm {Ghost weight energy estimate:}}
			\begin{equation}\label{est:ghost}
				\begin{aligned}
					\mathcal{E}_{gst}^{\frac{1}{2}}(t,u)\lesssim \mathcal{E}_{gst}^\frac{1}{2}(0,u)+\int_{0}^{t}\|f(s,x)\| \d s,
				\end{aligned}
			\end{equation}
		    where $$\mathcal{E}_{gst}(t,u)=\int_{\R^{3}}|\partial u|^2\d x+\sum_{a=1}^{3}\int_{0}^{t}\int_{\R^{3}}\frac{|G_au|^2}{\langle r-s\rangle^{1+2\delta}}\d x\d s,\quad0<\delta\ll1.$$
			
			\item\cite{Alinhac2009} {\rm{Conformal energy estimate:}}
			\begin{equation}\label{est:Conformal}
				\begin{aligned}
					\mathcal{E}_{con}^{\frac{1}{2}}(t,u)\lesssim \mathcal{E}_{con}^\frac{1}{2}(0,u)+\int_{0}^{t}\|\langle s+r \rangle f(s,x)\|\d s,
				\end{aligned}
			\end{equation}
			where $$\mathcal{E}_{con}(t,u)=\int_{\R^{3}}((L_0 u)^2+u^2+\sum_{1\leq a<b\leq 3}|\Omega_{ab} u|^2+\sum_{a=1}^{3}|L_a u|^2)(t,x)\d x.$$
		\end{enumerate}
		
	\end{lemma}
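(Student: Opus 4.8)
\textbf{Proof proposal for Lemma~\ref{lem:energy}.}

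The plan is to establish both energy identities by multiplier arguments in the standard way, keeping track of the ghost-weight and conformal weights carefully. For the ghost weight estimate~\eqref{est:ghost}, I would follow Alinhac's method: introduce the weight $e^{q(r-s)}$ with $q(\rho)=\int_{-\infty}^{\rho}\langle\tau\rangle^{-1-2\delta}\,\d\tau$, a bounded increasing function with $q'(\rho)=\langle\rho\rangle^{-1-2\delta}$, and multiply the equation $-\Box u=f$ by $e^{q(r-s)}\partial_t u$. Integrating over the slab $[0,t]\times\mathbb{R}^3$, the left-hand side produces (after integration by parts in space and time) the boundary term $\tfrac12\int e^{q(r-s)}|\partial u|^2\,\d x$ at time $t$ and time $0$, plus a bulk term of the form $\tfrac12\int_0^t\int e^{q(r-s)}q'(r-s)\sum_a|G_a u|^2\,\d x\,\d s$ coming from the mismatch between $\partial_t$ and the radial derivative; the crucial algebraic point is that $|\partial_t u|^2-\sum_a|\partial_a u|^2$ combined with the derivative of the weight yields exactly the good-derivative square $\sum_a |G_a u|^2 = |\partial_t u + \partial_r u|^2$-type quantity, up to lower order. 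Since $e^{q}$ is bounded above and below by positive constants, this gives $\mathcal{E}_{gst}(t,u)\lesssim \mathcal{E}_{gst}(0,u)+\int_0^t\!\int e^{q}|f||\partial_t u|\,\d x\,\d s$, and then I would absorb the $|\partial_t u|$ factor by Cauchy–Schwarz and a Gronwall/square-root argument (writing $\tfrac{d}{ds}\mathcal{E}_{gst}^{1/2}\lesssim \|f(s)\|$) to arrive at~\eqref{est:ghost}.

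For the conformal energy estimate~\eqref{est:Conformal}, I would use the conformal (Morawetz) multiplier $K_0 u = (t^2+r^2)\partial_t u + 2tr\partial_r u + 2t u$, or equivalently multiply $-\Box u = f$ by $2\big(L_0 u + u\big)$ after the standard conformal change of variables; the clean route is to recall the identity $\Box\big((t^2-r^2)^{?}\dots\big)$, but more concretely one verifies that for the multiplier $X = L_0 u + u$ one has the divergence identity whose time integral controls $\int_{\mathbb{R}^3}\big((L_0 u)^2 + u^2 + \sum_{a<b}|\Omega_{ab}u|^2 + \sum_a |L_a u|^2\big)\,\d x$. Integrating the identity $(-\Box u)\cdot 2(L_0 u + u) = f\cdot 2(L_0 u+u)$ over $[0,t]\times\mathbb{R}^3$ and using that the spacetime divergence terms integrate to the conformal energy at the endpoints (the positivity being the well-known fact that the conformal energy is a genuine $L^2$-type quantity under the hypothesis of compact support or sufficient decay, so no boundary terms at spatial infinity), one obtains $\mathcal{E}_{con}(t,u) \lesssim \mathcal{E}_{con}(0,u) + \int_0^t\!\int |f|\,|L_0 u + u|\,\d x\,\d s$. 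Since $|L_0 u + u|\lesssim \langle s+r\rangle\,\mathcal{E}_{con}^{1/2}$-density pointwise in the $L^2$ sense — more precisely $\|L_0 u + u\|_{L^2}\lesssim \mathcal{E}_{con}^{1/2}(s,u)$ and the weight $\langle s+r\rangle$ is what converts the raw $\|f\|$ into the weighted norm — Cauchy–Schwarz in $x$ gives $\tfrac{d}{ds}\mathcal{E}_{con}^{1/2}(s,u)\lesssim \|\langle s+r\rangle f(s,x)\|$, and integrating in $s$ yields~\eqref{est:Conformal}.

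The main obstacle, and the place where care is genuinely needed, is the conformal estimate: one must verify that the boundary/divergence terms produced by the conformal multiplier assemble \emph{exactly} into the stated sum of squares $(L_0 u)^2 + u^2 + \sum|\Omega_{ab}u|^2 + \sum|L_a u|^2$ with positive coefficients, rather than merely into something comparable up to uncontrolled lower-order errors — this is the classical but slightly delicate computation underlying the conformal method, and it relies on the algebraic identity relating $K_0 u$, the boost and rotation fields, and the $L^2$ norm of $u$ itself (the zeroth-order term appearing because of the conformal weight acting on the $n=3$ wave equation). In a write-up I would either reproduce this computation explicitly or, as the cited reference~\cite{Alinhac2009} already contains it, simply invoke it; the ghost weight part is comparatively routine once the weight $q$ and the sign of $q'$ are chosen correctly. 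I would also note that both estimates are first stated for smooth compactly supported (in $x$) solutions and then extended to the class of solutions in Theorem~\ref{thm:main1} by density and the finite speed of propagation, so that all the integrations by parts are justified and no flux through spatial infinity occurs.
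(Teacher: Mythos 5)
Your proposal follows essentially the same route as the paper: for (i), the ghost-weight multiplier $e^{q(r-t)}\partial_t u$ with $q(\rho)=\int_{-\infty}^{\rho}\langle\tau\rangle^{-1-2\delta}\,\d\tau$, integration by parts producing the boundary energy plus the bulk good-derivative term, and Gronwall; for (ii), the conformal Morawetz multiplier $(r^2+t^2)\partial_t u + 2tr\partial_r u + 2tu$, the divergence identity whose spatial integral gives an energy equivalent to $\mathcal{E}_{con}$, the decomposition $(r^2+t^2)\partial_t u + 2tr\partial_r u + 2tu = t(L_0 u + 2u)+x^aL_au$ to extract the weight $\langle s+r\rangle$, and Gronwall. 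A small algebraic slip in the sketch: the bulk quantity that reassembles into $\sum_a|G_au|^2$ is $(\partial_t u)^2 + 2\partial_r u\,\partial_t u + \sum_a(\partial_a u)^2$ (a plus sign and a cross term), not $(\partial_t u)^2 - \sum_a(\partial_a u)^2$, and the zeroth-order term produced by the conformal identity is $(L_0 u + 2u)^2$; but since the paper itself shows $\tilde{E}\sim\mathcal{E}_{con}$, this does not affect the conclusion.
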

	
	\begin{proof}
		\textbf{Proof of (i).} Let $q(r,t)=\int_{-\infty}^{r-t}\langle s\rangle^{-1-2\delta}\d s$ and set $e^q=e^{q(r,t)} $. Multiplying both sides of the equation by $e^q\partial_t u$, we have
		$$ \frac{1}{2}\partial_t(e^q|\partial u|^2)+\frac{1}{2}e^q\sum_{a}\frac{|G_au|^2}{\langle r-t\rangle^{1+2\delta}}-\partial^a (e^q\partial_a u\partial_t u)=f(t,x)(e^q\partial_t u).$$
		Note that $e^q\sim1$. Integrating over $\R^3$ in $x$ and from $0$ to $t$ in time, we get
		\begin{equation*}
			\begin{aligned}
				\mathcal{E}_{gst}(t,u)&\lesssim\mathcal{E}_{gst}(0,u)+\int_{0}^{t}\int_{\R^{3}}|f(s,x)(\partial_t u)|\d x\d s\\
				&\lesssim \mathcal{E}_{gst}(0,u)+\int_{0}^{t}\|f(s,x)\|\mathcal{E}_{gst}^\frac{1}{2}(s,u)\d s.
			\end{aligned}
		\end{equation*}
		Applying Gronwall inequality to the above, we obtain~\eqref{est:ghost}.
		
		\textbf{Proof of (ii).} Consider the nonspacelike multiplier
		\begin{equation*}
			\mathcal{M}_0=(r^2+t^2)\partial_t+2rt\partial_r,\quad\mbox{where}\ \partial_r=\omega^a\partial_a.
		\end{equation*}
		Multiplying  both sides of the equation by $\mathcal{M}_0 u+2tu$, we have
		\begin{equation*}
			\begin{aligned}
				&(-\Box u)(\mathcal{M}_0u+2tu)\\
				=&\frac{1}{2}\partial_t[(r^2+t^2)((\partial_tu)^2+\partial^bu\partial_bu)+4tr\partial_ru\partial_tu+4tu\partial_tu-2u^2+2\partial_a(x^au^2)]\\
				-&\partial^a[(r^2+t^2)\partial_au\partial_tu+tx_a((\partial_tu)^2-\partial^bu\partial_bu)+2tr\partial_ru\partial_au+2tu\partial_au+\partial_t(x_au^2)].
			\end{aligned}
		\end{equation*} 
		Hence integrating over $\R^3$ in $x$, we get
		\begin{equation}\label{equ:conformal}
			\begin{aligned}
				&\frac{1}{2}\partial_t\int_{\R^{3}}[(r^2+t^2)(\partial u)^2+4tr\partial_ru\partial_tu+4tu\partial_tu-2u^2+2\partial_a(x^au^2)]\d x\\
				=&\int_{\R^{3}}f(t,x)(\mathcal{M}_0u+2tu)\d x.
			\end{aligned}
		\end{equation}
		Let
		\begin{equation*}
			\begin{aligned}
				\tilde{E}(t,u)=\int_{\R^{3}}[(r^2+t^2)(\partial u)^2+4tr\partial_ru\partial_tu+4tu\partial_tu-2u^2+2\partial_a(x^au^2)](t,x)\d x.
			\end{aligned}
		\end{equation*}
	
		A straightforward calculation shows that
		\begin{equation*}
			\begin{aligned}
				\tilde{E}(t,u)=\int_{\R^{3}}[(L_0u+2u)^2+\sum_{1\leq a<b\leq3}|\Omega_{ab}u|^2+\sum_{a=1}^{3}|L_au|^2](t,x)\d x.
			\end{aligned}
		\end{equation*}
		By \eqref{equ:conformal}, we have
		\begin{equation*}
			\tilde{E}(t,u)\lesssim\tilde{E}(0,u)+\int_{0}^{t}\int_{\R^{3}}|f(s,x)(\mathcal{M}_0u+2su)|\d x\d s.
		\end{equation*}
		Recall the definition of $\mathcal{M}_0$, it holds
		\begin{equation*}
			\begin{aligned}
				\mathcal{M}_0u+2tu=&(r^2+t^2)\partial_tu+2rt\partial_ru+2tu\\
				=&t(L_0u+2u)+x^aL_au.
			\end{aligned}
		\end{equation*}
		Then
		\begin{equation}\label{equ:K_0}
			\tilde{E}(t,u)\lesssim\tilde{E}(0,u)+\int_{0}^{t}\|\langle s+r\rangle f(s,x)\|\tilde{E}^{\frac{1}{2}}(s,u)\d s.
		\end{equation}
		
		Applying Gronwall inequality to \eqref{equ:K_0}, we deduce
		\begin{equation}
			\tilde{E}^{\frac{1}{2}}(t,u)\lesssim \tilde{E}^{\frac{1}{2}}(0,u)+\int_{0}^{t}\|\langle s+r\rangle f(s,x)\|\d s.\label{equ:E2}
		\end{equation}
		It can be verified by careful calculations that
		\begin{equation}\label{equ:relation}
			\begin{aligned}
				\mathcal{E}_{con}(t,u)
				\sim&\tilde{E}(t,u).
			\end{aligned}
		\end{equation}
		Together with \eqref{equ:relation}, \eqref{equ:E2} yields the conformal energy estimate \eqref{est:Conformal}.
	\end{proof}

	To describe the asymptotic behavior of solutions to the inhomogeneous wave equation, we recall the scattering result from \cite{Sogge,Katayama2017}.
	\begin{lemma}\label{lem:scatterWave}
		Suppose that $u(t,x)$ is the solution to the Cauchy problem
		$$\left\{\begin{aligned}
			-\Box u(t,x) &=f(t,x),\\
			(u,\partial_t u)|_{t=0}&=(u_0,u_1).
		\end{aligned}\right.$$
		Suppose that 
		\begin{equation*}
			\int_{0}^{+\infty}\|f(\tau,x)\|_{\dot{H}^{s-1}}\d \tau<+\infty,
		\end{equation*}
		then there exist a pair $(\tilde{u}_0,\tilde{u}_1)\in \dot{H}^s(\mathbb{R}^3)\times \dot{H}^{s-1}(\mathbb{R}^3)$ and a constant $C$ such that
		\begin{equation*}
			\|(u,\partial_t u)^T-S_{W}(t)(\tilde{u}_0,\tilde{u}_1)^T\|_{\dot{H}^s\times \dot{H}^{s-1}}\leq C\int_{t}^{+\infty}\|f(\tau,x)\|_{\dot{H}^{s-1}}\d \tau.
		\end{equation*}
		In particular,
		\begin{equation*}
			\lim_{t\rightarrow+\infty}\|(u,\partial_t u)^T-S_{W}(t)(\tilde{u}_0,\tilde{u}_1)^T\|_{\dot{H}^s\times \dot{H}^{s-1}}=0.
		\end{equation*}
		In the above, $S_{W}(t)$ is denoted by
		\begin{equation*}
			\begin{pmatrix}
				\cos(t|\nabla|) & |\nabla|^{-1}\sin(t|\nabla|)\\
				-|\nabla|\sin(t|\nabla|) & \cos(t|\nabla|)
			\end{pmatrix}.
		\end{equation*}
		Here the operators are defined via the Fourier transform as
		\begin{equation*}
			\begin{aligned}
				\left(|\nabla|^{-1}f\right)(x)&=\mathcal{F}^{-1}\left(|\xi|^{-1}\hat{f}(\xi)\right)(x),\\
				\left(\cos(t|\nabla|)f\right)(x)&=\mathcal{F}^{-1}\left(\cos(t|\xi|)\hat{f}(\xi)\right)(x),
			\end{aligned}
		\end{equation*}
		and other operators are defined similarly.
		
		In addition, $S_{W}(t)(\tilde{u}_0,\tilde{u}_1)^T$ solves
		\begin{equation*}
			-\Box u=0, \quad(u,\partial_t u)|_{t=0}=(\tilde{u}_0,\tilde{u}_1).
		\end{equation*}
	\end{lemma}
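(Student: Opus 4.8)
The plan is to prove this by Cook's method: combine Duhamel's representation for $u$ with the fact that the free propagator $S_W(t)$ is a strongly continuous one-parameter group of isometries on the energy space $\dot{H}^s(\R^3)\times\dot{H}^{s-1}(\R^3)$. The isometry property is a one-line Fourier computation. Writing $c=\cos(t|\xi|)$ and $\sigma=\sin(t|\xi|)$, the symbol of $S_W(t)$ sends $(\widehat{v_0},\widehat{v_1})$ to $(c\,\widehat{v_0}+|\xi|^{-1}\sigma\,\widehat{v_1},\,-|\xi|\sigma\,\widehat{v_0}+c\,\widehat{v_1})$, and setting $A=|\xi|\widehat{v_0}$, $B=\widehat{v_1}$ one checks $|cA+\sigma B|^2+|\sigma A-cB|^2=|A|^2+|B|^2$; multiplying by $|\xi|^{2(s-1)}$ and integrating gives
\begin{equation*}
 \|S_W(t)(v_0,v_1)^T\|_{\dot{H}^s\times\dot{H}^{s-1}}=\|(v_0,v_1)^T\|_{\dot{H}^s\times\dot{H}^{s-1}},\qquad t\in\R,
\end{equation*}
while the group law $S_W(t_1)S_W(t_2)=S_W(t_1+t_2)$ is immediate on the Fourier side. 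The same computation shows that for any $(w_0,w_1)\in\dot{H}^s\times\dot{H}^{s-1}$ the curve $t\mapsto S_W(t)(w_0,w_1)^T$ solves $-\Box u=0$ with data $(w_0,w_1)$ (each column of the symbol solves $\ddot y+|\xi|^2 y=0$), which settles the last assertion of the lemma once $(\tilde u_0,\tilde u_1)$ has been constructed.

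Next I would write Duhamel's formula for the solution $u$,
\begin{equation*}
 (u,\partial_t u)^T(t)=S_W(t)(u_0,u_1)^T+\int_0^t S_W(t-\tau)\,(0,f(\tau,\cdot))^T\,\d\tau ,
\end{equation*}
noting that $\|(0,f(\tau,\cdot))^T\|_{\dot{H}^s\times\dot{H}^{s-1}}=\|f(\tau,\cdot)\|_{\dot{H}^{s-1}}$ is integrable on $[0,\infty)$ by hypothesis, so the Bochner integral is well defined in the energy space. Applying the isometry $S_W(-t)$ and the group law in the form $S_W(-t)S_W(t-\tau)=S_W(-\tau)$ yields
\begin{equation*}
 S_W(-t)(u,\partial_t u)^T(t)=(u_0,u_1)^T+\int_0^t S_W(-\tau)\,(0,f(\tau,\cdot))^T\,\d\tau .
\end{equation*}
Since $\|S_W(-\tau)(0,f(\tau,\cdot))^T\|_{\dot{H}^s\times\dot{H}^{s-1}}=\|f(\tau,\cdot)\|_{\dot{H}^{s-1}}\in L^1([0,\infty))$, the improper integral converges in $\dot{H}^s\times\dot{H}^{s-1}$, and I define
\begin{equation*}
 (\tilde u_0,\tilde u_1)^T:=(u_0,u_1)^T+\int_0^{\infty}S_W(-\tau)\,(0,f(\tau,\cdot))^T\,\d\tau\ \in\ \dot{H}^s\times\dot{H}^{s-1}.
\end{equation*}
Subtracting the previous two displays gives $S_W(-t)(u,\partial_t u)^T(t)-(\tilde u_0,\tilde u_1)^T=-\int_t^{\infty}S_W(-\tau)(0,f(\tau,\cdot))^T\,\d\tau$; applying the isometry $S_W(t)$ and the triangle inequality for Bochner integrals then produces
\begin{equation*}
 \big\|(u,\partial_t u)^T(t)-S_W(t)(\tilde u_0,\tilde u_1)^T\big\|_{\dot{H}^s\times\dot{H}^{s-1}}\le\int_t^{\infty}\|f(\tau,\cdot)\|_{\dot{H}^{s-1}}\,\d\tau ,
\end{equation*}
which is the claimed estimate with $C=1$, and letting $t\to\infty$ the right-hand side is the tail of a convergent integral and tends to $0$, giving the scattering limit.

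There is no essential obstacle: this is the standard existence-of-wave-operators (Cook's method) argument, and the only points needing a little care are (a) verifying that $S_W(t)$ is an isometry on the \emph{homogeneous} space $\dot{H}^s\times\dot{H}^{s-1}$, i.e. the Fourier identity above — in particular the low-frequency factor $|\xi|^{-1}\sigma$ causes no trouble since it is dominated by $|\xi|^{-1}$ — and (b) justifying that the Duhamel term and the integral defining $(\tilde u_0,\tilde u_1)$ are genuine Bochner integrals valued in the energy space, which is exactly the content of the hypothesis $\int_0^{\infty}\|f(\tau,\cdot)\|_{\dot{H}^{s-1}}\,\d\tau<\infty$. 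One could alternatively just invoke \cite{Sogge,Katayama2017} for these facts; I would include the short argument above for completeness.
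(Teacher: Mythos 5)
Your proof is correct. The paper itself does not supply an argument for this lemma, instead citing \cite{Sogge,Katayama2017}; the Cook's-method proof you give (Duhamel's formula, the Fourier-side isometry and group law for $S_W(t)$ on $\dot H^s\times\dot H^{s-1}$, and absolute convergence of the Bochner integral $\int_0^\infty S_W(-\tau)(0,f(\tau,\cdot))^T\,\d\tau$) is exactly the standard one behind those references, and every step checks out, including the low-frequency observation for $|\xi|^{-1}\sin(t|\xi|)$ and the resulting bound with $C=1$.
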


	As the final result of this subsection, we establish the following lemma, which provides an $L^2$ estimate for the solution to the 3D wave equation.
	\begin{lemma}\label{lem:L2}
		Let $u=u(t,x)$ be the solution to the Cauchy problem
		$$\left\{\begin{aligned}
			-\Box u(t,x)&=f(t,x),\\
			(u,\partial_t u)|_{t=0}&=(u_0,u_1),
		\end{aligned}\right.$$
		
		with $f(t,x)$ a sufficiently regular function. Then for any $\eta >0$, we have
		\begin{equation}
			\begin{aligned}
				\|u(t,x)\|\lesssim \|u_0\|+\|u_1\|_{L^1\cap L^2}+\int_{0}^{t}\left(\langle s\rangle^{-\frac{\eta}{2}}\|f(s,x)\|_{L^1}+\langle s\rangle^{\eta}\|f(s,x)\|\right)\d s.\label{est:uL2}
			\end{aligned}
		\end{equation}
	\end{lemma}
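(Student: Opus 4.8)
The plan is to solve the equation explicitly in Fourier variables and then carry out a frequency decomposition whose cutoff depends on the time variable. Writing $-\Box u=f$ as $\partial_t^2\hat u+|\xi|^2\hat u=\hat f$ and invoking Duhamel's principle gives
\begin{equation*}
	\hat u(t,\xi)=\cos(t|\xi|)\hat u_0(\xi)+\frac{\sin(t|\xi|)}{|\xi|}\hat u_1(\xi)+\int_0^t\frac{\sin((t-s)|\xi|)}{|\xi|}\hat f(s,\xi)\,\d s,
\end{equation*}
so by Plancherel's theorem it suffices to estimate the $L^2_\xi$ norm of the right-hand side. The $u_0$-term is immediate from $|\cos(t|\xi|)|\le1$. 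For the $u_1$-term one splits into $\{|\xi|>1\}$, where $\bigl|\sin(t|\xi|)/|\xi|\bigr|\le1$ gives control by $\|u_1\|$, and $\{|\xi|\le1\}$, where one uses $\|\hat u_1\|_{L^\infty}\le\|u_1\|_{L^1}$ together with the three-dimensional fact that $|\xi|^{-1}\in L^2(\{|\xi|\le1\})$ to obtain the $\|u_1\|_{L^1}$ piece. This already yields the $\|u_0\|+\|u_1\|_{L^1\cap L^2}$ part of~\eqref{est:uL2}.

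The heart of the matter is the Duhamel term. Minkowski's integral inequality moves the $L^2_\xi$ norm inside the $\d s$ integral, reducing the task to the pointwise-in-$s$ bound
\begin{equation*}
	\Bigl\|\,\frac{\sin((t-s)|\xi|)}{|\xi|}\hat f(s,\xi)\,\Bigr\|_{L^2_\xi}\lesssim\langle s\rangle^{-\frac{\eta}{2}}\|f(s,x)\|_{L^1}+\langle s\rangle^{\eta}\|f(s,x)\|,\qquad s\in[0,t].
\end{equation*}
Here I would split the $\xi$-integral into the three regions $\{|\xi|\le\langle s\rangle^{-\eta}\}$, $\{\langle s\rangle^{-\eta}<|\xi|\le1\}$ and $\{|\xi|>1\}$. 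On the innermost region, bound the multiplier by $|\xi|^{-1}$ and $|\hat f(s,\xi)|$ by $\|f(s,x)\|_{L^1}$; since $\bigl\|\,|\xi|^{-1}\,\bigr\|_{L^2(\{|\xi|\le\langle s\rangle^{-\eta}\})}\sim\langle s\rangle^{-\eta/2}$, this produces exactly $\langle s\rangle^{-\eta/2}\|f(s,x)\|_{L^1}$. On the middle region, bound the multiplier by $|\xi|^{-1}\le\langle s\rangle^{\eta}$ and apply Plancherel to get $\langle s\rangle^{\eta}\|f(s,x)\|$. On the outer region the multiplier is at most $1$, giving $\|f(s,x)\|\le\langle s\rangle^{\eta}\|f(s,x)\|$. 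Summing the three pieces and integrating over $s\in[0,t]$ yields~\eqref{est:uL2}.

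The only genuine difficulty is the low-frequency singularity of the half-wave propagator $\sin(\tau|\xi|)/|\xi|$ at $\xi=0$: near the origin one is forced to pay with $\|f\|_{L^1}$ (equivalently $\|\hat f\|_{L^\infty}$), and the subtle point is where to place the frequency cutoff. Choosing it at scale $\langle s\rangle^{-\eta}$ distributes the loss so that the $L^1$ term is in fact improved by $\langle s\rangle^{-\eta/2}$ at the cost of only a $\langle s\rangle^{\eta}$ growth on the $L^2$ term — harmless since $\eta>0$ may be taken arbitrarily small. Everything else (the Duhamel representation, Minkowski's inequality, Plancherel's theorem) is routine given the regularity assumed on $f$.
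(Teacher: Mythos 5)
Your proposal is correct and takes essentially the same approach as the paper: Duhamel in Fourier variables, Plancherel, and a frequency split at the time-dependent scale $\langle s\rangle^{-\eta}$ that trades the low-frequency $|\xi|^{-1}$ singularity against the $L^1$ and $L^2$ norms of $f$. Your three-region decomposition merely subdivides the paper's outer region $\{|\xi|\ge\langle s\rangle^{-\eta}\}$ into $\{\langle s\rangle^{-\eta}<|\xi|\le1\}$ and $\{|\xi|>1\}$, which yields the same bound; likewise your handling of the $u_1$-term via $|\xi|^{-1}\in L^2(\{|\xi|\le1\})$ is the same polar-coordinate cancellation the paper uses.
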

	\begin{proof}
		Taking the Fourier transform of the Cauchy problem with respect to $x$ and applying Duhamel's principle, we get
		\begin{equation*}
			\left\{\begin{aligned}
				\partial_{tt} \hat{u}(t,\xi)+|\xi|^2\hat{u}(t,\xi)=\hat{f}(t,\xi),\\
				\hat{u}(0,\xi)=\hat{u}_0(\xi),\quad \partial_t\hat{u}(0,\xi)=\hat{u}_1(\xi).
			\end{aligned}\right.
		\end{equation*}

		Solving this second-order ODE in $t$, we get the solution in Fourier space:
		$$\hat{u}(t,\xi)=\cos(t|\xi|)\hat{u}_0(\xi)+\frac{\sin(t|\xi|)}{|\xi|}\hat{u}_1(\xi)+\int_{0}^{t}\frac{\sin((t-s)|\xi|)}{|\xi|}\hat{f}(s,\xi)\d s, $$
		for $(t,\xi)\in [0,\infty)\times\mathbb{R}^3$.
		
		Using the Plancherel theorem and a polar coordinate transformation, we deduce
		\begin{equation*}
			\begin{aligned}
				\|\cos(t|\xi|)\hat{u}_0(\xi)\|_{L^2_\xi}&\leq \|\hat{u}_0(\xi)\|_{L^2_\xi} \lesssim \|u_0\|,\\
				\left\|\frac{\sin(t|\xi|)}{|\xi|}\hat{u}_1(\xi)\right\|^2_{L^2_\xi}&=\int_{|\xi|\leq1} \frac{\sin^2(t|\xi|)}{|\xi|^2}|\hat{u}_1(\xi)|^2\d \xi+\int_{|\xi|\geq1} \frac{\sin^2(t|\xi|)}{|\xi|^2}|\hat{u}_1(\xi)|^2\d \xi\\
				&\leq\|\hat{u}_1(\xi)\|^2_{L^2_\xi}+\int_{|\omega|=1} \int_{0}^{1}\sin^2(tr)|\hat{u}_1(r)|^2\d r \d \omega\\
				&\lesssim \|\hat{u}_1(\xi)\|^2_{L^2_\xi}+\|\hat{u}_1(\xi)\|^2_{L^\infty_\xi}\\
				&\lesssim \|u_1\|^2+\|u_1\|^2_{L^1},\\
			\end{aligned}
		\end{equation*}
		\begin{equation*}
			\begin{aligned}
				&\left\|\frac{\sin((t-s)|\xi|)}{|\xi|}\hat{f}(s,\xi)\right\|^2_{L^2_\xi}\\
				=&\int_{|\xi|\leq \langle s\rangle^{-\eta}} \frac{\sin^2((t-s)|\xi|)}{|\xi|^2}|\hat{f}(s,\xi)|^2\d \xi
				+\int_{|\xi|\geq \langle s\rangle^{-\eta}} \frac{\sin^2((t-s)|\xi|)}{|\xi|^2}|\hat{f}(s,\xi)|^2\d \xi\\
				\lesssim& \langle s\rangle^{-\eta}\|\hat{f}(s,\xi)\|^2_{L^\infty_\xi}+\langle s\rangle^{2\eta}\|\hat{f}(s,\xi)\|^2_{L^2_\xi}\\
				\lesssim& \langle s\rangle^{-\eta}\|f(s,x)\|^2_{L^1}+\langle s\rangle^{2\eta}\|f(s,x)\|^2,
			\end{aligned}
		\end{equation*}
		for any $\eta>0.$
		
		Therefore, using again the Plancherel theorem, we obtain
		\begin{equation*}
			\begin{aligned}
				\|u(t,x)\|&\lesssim\|\hat{u}(t,\xi)\|_{L^2_\xi}\\
				&\lesssim\|u_0\|+\|u_1\|_{L^1}+\|u_1\|+\int_{0}^{t}\left(\langle s\rangle^{-\frac{\eta}{2}}\|f(s,x)\|_{L^1}+\langle s\rangle^\eta\|f(s,x)\|\right)\d s,
			\end{aligned}
		\end{equation*}
		which implies~\eqref{est:uL2}.
	\end{proof}

	\section{Global existence and scattering}\label{Se:exist}
	In this section, we establish global existence, uniform boundedness of the ghost weight energies, and scattering of the solution $w$ to~\eqref{equ:Wave}, under the conditions~\eqref{est:initial1} and~\eqref{est:initial2} in Theorem~\ref{thm:main1}. The proof is carried out via a bootstrap argument on the ghost weight energies of the solution.
	
	\subsection{Bootstrap setting}\label{Se:Boot}
	Let $N\in \mathbb{N}$ with $N\geq 5$. To prove Theorem~\ref{thm:main1}, we impose the following bootstrap assumptions on $w$: for $C_1>1$ and $0<\varepsilon\ll C_1^{-1}$ to be chosen later, 
	\begin{equation}\label{est:Bootw}
		\left\{\begin{aligned}
			\mathcal{E}_{gst}^{\frac{1}{2}}(t,\Gamma^I w)&\leq C_1,\quad\quad|I|\leq N,\\
			\mathcal{E}_{gst}^{\frac{1}{2}}(t,\partial\Gamma^I w)&\leq C_1\varepsilon,\quad\ \ |I|\leq N-1.\\
		\end{aligned}\right.
	\end{equation}

	For all initial data $(w_{0},w_{1})$ satisfying~\eqref{est:Bootw}, define
	\begin{equation*}
		T^{*}(w_{0},w_{1}):=\sup\left\{ t\in [0,\infty): w\ \mbox{satisfies}~\eqref{est:Bootw}\ \mbox{on}\ [0,t]\right\}.
	\end{equation*} 
	The following proposition is a key component of Theorem~\ref{thm:main1}.
	
	\begin{proposition}\label{pro:main}
		For all initial data $(w_{0},w_{1})$ satisfying the conditions~\eqref{est:initial1} and~\eqref{est:initial2} in Theorem~\ref{thm:main1}, it holds that $T^{*}(w_{0},w_{1})=+\infty$.
	\end{proposition}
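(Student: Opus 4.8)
The plan is to run a standard continuity/bootstrap argument. One first establishes local existence from the initial data (which follows from classical theory since $N\geq5$ guarantees enough regularity), and checks that the bootstrap assumptions~\eqref{est:Bootw} hold at $t=0$ with a strict inequality, say with $C_1$ replaced by $C_1/2$; this uses the hypotheses~\eqref{est:initial1}--\eqref{est:initial2} together with the Hardy inequality of Proposition~\ref{prop:hardy} to convert the weighted $L^2$ norms of the data into initial ghost weight energies of $\Gamma^Iw$ and $\partial\Gamma^I w$. Since $T^{*}$ is by definition the supremal time on which~\eqref{est:Bootw} holds, to show $T^{*}=+\infty$ it suffices to prove that on $[0,T^{*})$ the estimates~\eqref{est:Bootw} can be \emph{improved}, i.e.\ the constant $C_1$ can be replaced by $C_1/2$ (and $C_1\varepsilon$ by $C_1\varepsilon/2$); a standard open-closed argument then forces $T^{*}=+\infty$.

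The core of the proof is the improvement step, carried out on the time interval $[0,T^{*})$. First one derives the pointwise decay estimates $|\partial\Gamma^J w|\lesssim C_1\langle t+r\rangle^{-1}\langle t-r\rangle^{-1/2}$ for $|J|\leq N-2$ and $|\partial\partial\Gamma^J w|\lesssim C_1\varepsilon\langle t+r\rangle^{-1}\langle t-r\rangle^{-1/2}$ for $|J|\leq N-3$ from the bootstrap energy bounds~\eqref{est:Bootw} via the Klainerman--Sobolev inequality~\eqref{est:Sobo} and the commutator estimate~\eqref{est:commutators}; the good-derivative gain $|G_a\Gamma^J w|\lesssim C_1\langle t+r\rangle^{-2}\langle t-r\rangle^{1/2}$ comes from~\eqref{est:ope}. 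Next, commuting~\eqref{equ:Wave} with $\Gamma^I$ and using the null-form structure of $Q_P$ and $Q_0$ via Lemmas~\ref{lem:P&Q} and~\ref{lem:P1}, one writes $-\Box\Gamma^I w$ as a sum of terms each carrying at least one good-derivative factor $G$; splitting the sum into a low-frequency part (many vector fields hit one factor, the other is estimated pointwise) and a high part, one bounds $\|{-}\Box\Gamma^I w(s)\|_{L^2}$ and $\|{-}\Box\partial\Gamma^I w(s)\|_{L^2}$ by integrable-in-time quantities — the key gain being that the ghost weight term $\int\!\!\int |G_a w|^2\langle r-s\rangle^{-1-2\delta}$ absorbs the borderline $\langle t-r\rangle^{-1}$ loss. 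Feeding these into the ghost weight energy estimate~\eqref{est:ghost} and using Gr\"onwall closes the bootstrap, provided $\varepsilon_0$ is taken small enough (polynomially in $K$, hence in $C_1$). Linear scattering then follows from Lemma~\ref{lem:scatterWave} once $\int_0^\infty\|{-}\Box\Gamma^I w(s)\|_{\dot H^{s-1}}\,ds<\infty$ is known, and the uniform energy bound~\eqref{est:ghostenergy} is exactly the improved bootstrap bound.

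The main obstacle is the presence of the large data: the top-order norms in~\eqref{est:initial1} are only bounded by $K$, not by $\varepsilon$, so the terms in which \emph{every} vector field lands on the "large" factor $w$ (whose $\Gamma$-derivatives are of size $C_1$, not $C_1\varepsilon$) do not obviously close — one only has a $C_1^2$, rather than $C_1^2\varepsilon$, bound on the nonlinearity a priori. The resolution exploits the quadratic/cubic null structure: in every such term the large factor appears \emph{differentiated twice} (as in $Q_P(w,\Gamma^I w)$, where the estimate $|Q_P(w,v)|\lesssim|Gw||\partial\partial v|+|\partial w||G\partial v|$ forces either a good derivative on $w$ or a $\partial\partial w$, which by~\eqref{est:Bootw} is of size $C_1\varepsilon$), so the genuinely dangerous interactions always carry an extra smallness or an extra $\langle t+r\rangle^{-1}$ decay from a good derivative. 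Quantifying this — i.e.\ showing that the worst term obeys $\|{-}\Box\Gamma^I w(s)\|\lesssim C_1\varepsilon\,\langle s\rangle^{-1-\delta'}$ with $\delta'>0$ — is where the ghost weight method of Alinhac is essential, and the bulk of the work in the proof of Proposition~\ref{pro:main} goes into this careful case analysis of the nonlinear terms sorted by how the vector fields $\Gamma^I$ are distributed.
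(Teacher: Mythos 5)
Your overall strategy (bootstrap, pointwise decay via Klainerman--Sobolev, null-structure analysis, closing via the ghost-weight energy estimate) matches the paper's Section~\ref{Se:exist}, and your identification of the large-data difficulty and its resolution through the null structure is in the right spirit. However, there is a genuine gap at the top order that your plan does not address.

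You propose to bound $\|{-}\Box\Gamma^I w(s)\|_{L^2}$ and feed the result into the ghost-weight energy estimate~\eqref{est:ghost}. This works for $|I|\leq N-1$ (and is exactly what the paper does in~\eqref{est:lower1} using Lemma~\ref{est:L1L2nonlinear}). But for $|I|=N$, the commuted equation~\eqref{equ:gamma} contains the quasilinear principal term
\begin{equation*}
P^{\gamma\alpha\beta}\,\partial_\gamma w\,\partial_\alpha\partial_\beta\Gamma^I w ,
\end{equation*}
and the factor $\partial\partial\Gamma^I w$ carries $N+2$ derivatives of $w$. The bootstrap assumptions~\eqref{est:Bootw} control $\|\partial\Gamma^J w\|$ for $|J|\leq N$ and $\|\partial\partial\Gamma^J w\|$ for $|J|\leq N-1$, i.e.\ at most $N+1$ derivatives in $L^2$; they give you no control of $\|\partial\partial\Gamma^I w\|$ at $|I|=N$. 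The ghost-weight gain on $G_a\Gamma^I w$ does not help here --- the problem is a loss of one full derivative, not a $\langle t-r\rangle$-loss. The same issue recurs for $\mathcal{E}_{gst}(t,\partial\Gamma^I w)$ at $|I|=N-1$, where $Q_P(w,\partial\Gamma^I w)$ contains $\partial\partial\partial\Gamma^I w$ with $N+2$ derivatives. Treating this term as an external forcing $f$ in~\eqref{est:ghost} therefore cannot close the top-order bootstrap, no matter how clever the case analysis of the remaining commutator terms is.

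The paper's proof resolves this by \emph{not} placing the principal term in the source. Instead it multiplies the commuted equation by $e^q\partial_t\Gamma^I w$ and integrates by parts in the $\partial_\alpha\partial_\beta\Gamma^I w$ factor, cf.~\eqref{equ:H}--\eqref{equ:integral}, thereby moving the quasilinear principal part into a modified energy
\begin{equation*}
E_M(t,\Gamma^I w)=\int_{\R^3} e^q\Bigl(|\partial \Gamma^I w|^2 - 2H^{0\beta}\partial_\beta\Gamma^I w\,\partial_t\Gamma^I w + H^{\alpha\beta}\partial_\beta\Gamma^I w\,\partial_\alpha\Gamma^I w\Bigr)\d x + \text{(ghost term)} ,
\end{equation*}
which is shown to be equivalent to $\mathcal{E}_{gst}(t,\Gamma^I w)$ thanks to $\|\partial w\|_{L^\infty}\lesssim C_1\varepsilon$. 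The leftover error terms $\mathcal{A}_3,\dots,\mathcal{A}_6$ only involve $\partial H^{\alpha\beta}=P^{\gamma\alpha\beta}\partial\partial_\gamma w$ or weights on $\partial e^q$ times \emph{at most} $N+1$ derivatives of $w$, and their estimation is where the good-derivative structure and the ghost weight actually enter. Your proposal needs this integration-by-parts / modified-energy step (or an equivalent device, such as Kirchhoff-type symmetrization) at the top order; without it the argument does not close.

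A smaller point: the improvement at the top order does not come out as ``$C_1$ replaced by $C_1/2$'' uniformly --- what one obtains is $\mathcal{E}_{gst}(t,\Gamma^I w)\lesssim K^{2N+2}+C_1^3\varepsilon^{1/4}$ (see~\eqref{est:highest1}), and the bootstrap closes by first choosing $C_1$ large compared to the implicit constant times a power of $K$, then $\varepsilon$ small compared to $C_1$. Stating the improvement as a factor of $1/2$ obscures the order in which $C_1$ and $\varepsilon_0$ must be chosen, which matters here because the data are large.
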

	
	In the remainder of this section, we first focus on the proof of Proposition~\ref{pro:main}. From now on, the implied constants in $\lesssim$ are independent of the constants $C_1$ and $\varepsilon$ appearing in the bootstrap assumptions~\eqref{est:Bootw}.
	
	Note that, by the Klainerman-Sobolev inequality~\eqref{est:Sobo}, estimate \eqref{est:commutators}, and the bootstrap assumptions \eqref{est:Bootw}, for all $t\in [0,T^*(w_0,w_1))$, we have the following estimates for $w$ (with $0<\delta<\frac{1}{12}$)

    \begin{align}\label{est:L2norm1}
    	&\|\partial\Gamma^Iw\|+\left(\int_{0}^{t}\int_{\R^{3}}\frac{|G\Gamma^Iw|^2}{\langle r-s\rangle^{1+2\delta}}\d x\d s\right)^{\frac{1}{2}}\lesssim C_1,\quad\quad\quad|I|\leq N,\\
    	&\|\partial\partial\Gamma^Iw\|+\left(\int_{0}^{t}\int_{\R^{3}}\frac{|G\partial\Gamma^Iw|^2}{\langle r-s\rangle^{1+2\delta}}\d x\d s\right)^{\frac{1}{2}}\lesssim C_1\varepsilon,\quad\quad|I|\leq N-1,\label{est:L2norm2}\\
    	&|\partial \Gamma^I w|\l\lesssim C_1\langle t+r\rangle^{-1}\langle t-r\rangle^{-\frac{1}{2}},\quad\quad\quad\quad\quad\quad\quad\quad\quad\ |I|\leq N-2,\label{est:pointwise1}\\
    	&|\partial\partial \Gamma^I w|\l\lesssim C_1 \varepsilon \langle t+r\rangle^{-1}\langle t-r\rangle^{-\frac{1}{2}},\quad\quad\quad\quad\quad\quad\quad\quad\ |I|\leq N-3.\label{est:pointwise2}
    \end{align}

	\subsection{Estimates under the bootstrap assumptions}
	In this subsection, we first derive the pointwise estimates for the solution $w$ under the bootstrap assumptions~\eqref{est:Bootw}. These estimates will serve as a key tool in controlling the nonlinear terms and in closing the bootstrap argument.
	
	\begin{lemma}
		Let $N\in \mathbb{N}$ with $N\geq5$. For all $t\in[0,T^*(w_0,w_1))$, the following estimates hold:
		
		\begin{enumerate}
			
			\item {\rm {Pointwise estimates on $\Gamma^Iw$}}. For $|I|\leq N-2$, we have
			\begin{equation}\label{est:gamma}
				|\Gamma^Iw|\lesssim C_1\langle t+r\rangle^{-\frac{1}{2}+\delta}.
			\end{equation}
			
			\item {\rm {Pointwise estimates on $\partial\partial\Gamma^Iw$}}. For $|I|\leq N-3$, we have
			\begin{equation}\label{est:partialdouble}
				|\partial\partial\Gamma^Iw|\lesssim C_1\varepsilon^{\frac{1}{2}}\langle t+r\rangle^{-1}\langle t-r\rangle^{-1}.
			\end{equation}
			
			\item {\rm {Pointwise estimates on $\partial\Gamma^Iw$}}. For $|I|\leq N-3$, we have
			\begin{equation}\label{est:partial}
				|\partial\Gamma^Iw|\lesssim \min\{C_1\varepsilon\langle t+r\rangle^{-\frac{1}{2}+\delta}, C_1\varepsilon^{\frac{1}{2}}\langle t+r\rangle^{-1+\delta}, C_1\langle t+r\rangle^{-\frac{1}{2}+\delta}\langle t-r\rangle^{-1}\}.
			\end{equation}
			
		\end{enumerate}
		
	\end{lemma}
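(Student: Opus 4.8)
The plan is to prove the three estimates in the order stated, using only two mechanisms: radial integration from spatial infinity of the pointwise bounds \eqref{est:pointwise1}--\eqref{est:pointwise2} already available under the bootstrap assumptions, and the extra decay of Lemma~\ref{lem:extra} away from the light cone, interpolated against the near-cone bounds. The one preliminary point is that, for each fixed $t$, both $\Gamma^Iw(t,\cdot)$ and $\partial\Gamma^Iw(t,\cdot)$ vanish as $|x|\to\infty$: for the derivative this is immediate from \eqref{est:pointwise1}; for $\Gamma^Iw$ itself, $\partial_\rho\Gamma^Iw(t,\rho\omega)$ is integrable in $\rho$ by \eqref{est:pointwise1} while $\Gamma^Iw(t,\cdot)/|x|\in L^2$ by Proposition~\ref{prop:hardy} and \eqref{est:L2norm1}, so the radial limit must be $0$. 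Hence for $|I|\le N-2$ we may write $\Gamma^Iw(t,x)=-\int_r^\infty\partial_\rho\Gamma^Iw(t,\rho\omega)\,\d\rho$, so $|\Gamma^Iw(t,x)|\lesssim\int_r^\infty|\partial\Gamma^Iw(t,\rho\omega)|\,\d\rho\lesssim C_1\int_r^\infty\langle t+\rho\rangle^{-1}\langle t-\rho\rangle^{-1/2}\,\d\rho$ by \eqref{est:pointwise1}. Splitting the integral at $\rho=2t$, separating the cases $r\ge 2t$ and $r<2t$, and using that $\int\langle\sigma\rangle^{-1/2}\,\d\sigma$ over an interval of length $\lesssim t$ is $\lesssim\langle t\rangle^{1/2}$, one obtains $|\Gamma^Iw|\lesssim C_1\langle t+r\rangle^{-1/2}\le C_1\langle t+r\rangle^{-1/2+\delta}$, which is \eqref{est:gamma}.

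For \eqref{est:partialdouble} I would combine two complementary bounds valid for $|I|\le N-3$. Near the cone, \eqref{est:pointwise2} already gives $|\partial\partial\Gamma^Iw|\lesssim C_1\varepsilon\langle t+r\rangle^{-1}\langle t-r\rangle^{-1/2}$. Away from it, applying Lemma~\ref{lem:extra} to $u=\partial_\mu\Gamma^Iw$ and commuting $\Gamma^J$ past $\partial_\mu$ via Lemma~\ref{lem:commutators} bounds $\langle t-r\rangle|\partial\partial_\mu\Gamma^Iw|$ by $\sum_{|K|\le N-2}|\partial\Gamma^Kw|\lesssim C_1\langle t+r\rangle^{-1}\langle t-r\rangle^{-1/2}$ via \eqref{est:pointwise1}, so $|\partial\partial\Gamma^Iw|\lesssim C_1\langle t+r\rangle^{-1}\langle t-r\rangle^{-3/2}$. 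On $\{\langle t-r\rangle\ge\varepsilon^{-1}\}$ the second bound reads $C_1\langle t+r\rangle^{-1}\langle t-r\rangle^{-1}\cdot\langle t-r\rangle^{-1/2}\le C_1\varepsilon^{1/2}\langle t+r\rangle^{-1}\langle t-r\rangle^{-1}$, and on $\{\langle t-r\rangle<\varepsilon^{-1}\}$ the first bound reads $C_1\langle t+r\rangle^{-1}\langle t-r\rangle^{-1}\cdot\varepsilon\langle t-r\rangle^{1/2}\le C_1\varepsilon^{1/2}\langle t+r\rangle^{-1}\langle t-r\rangle^{-1}$; together this is \eqref{est:partialdouble}.

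For \eqref{est:partial} I would establish the three bounds in the minimum separately, for $|I|\le N-3$. The bound $C_1\varepsilon\langle t+r\rangle^{-1/2+\delta}$ comes from writing $\partial\Gamma^Iw(t,x)=-\int_r^\infty\partial_\rho(\partial\Gamma^Iw)(t,\rho\omega)\,\d\rho$ and inserting \eqref{est:pointwise2}, which is exactly the computation behind \eqref{est:gamma} but carrying the extra factor $\varepsilon$. The bound $C_1\varepsilon^{1/2}\langle t+r\rangle^{-1+\delta}$ comes from the same representation but inserting the estimate \eqref{est:partialdouble} just proved: $|\partial\Gamma^Iw|\lesssim C_1\varepsilon^{1/2}\int_r^\infty\langle t+\rho\rangle^{-1}\langle t-\rho\rangle^{-1}\,\d\rho$, where the borderline weight $\langle t-\rho\rangle^{-1}$ contributes a factor $\lesssim\log\langle t+r\rangle$, and $\langle t+r\rangle^{-1}\log\langle t+r\rangle\lesssim\langle t+r\rangle^{-1+\delta}$. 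The bound $C_1\langle t+r\rangle^{-1/2+\delta}\langle t-r\rangle^{-1}$ follows from Lemma~\ref{lem:extra} applied to $u=\Gamma^Iw$, namely $\langle t-r\rangle|\partial\Gamma^Iw|\lesssim\sum_{|J|=1}|\Gamma^J\Gamma^Iw|$, and bounding the right side by \eqref{est:gamma}, which is legitimate since $|J|+|I|\le N-2$. Taking the minimum of the three gives \eqref{est:partial}.

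All of this is elementary; the two points requiring attention are the bookkeeping of the radial integrals—splitting at $\rho=2t$, separating $r\ge 2t$, and absorbing into $\langle t+r\rangle^{\delta}$ the logarithm produced by the borderline weight $\langle t-\rho\rangle^{-1}$—and the crossover at $\langle t-r\rangle=\varepsilon^{-1}$ in \eqref{est:partialdouble}, where one must check that the loss $\varepsilon\to\varepsilon^{1/2}$ is balanced exactly by the improved weight $\langle t-r\rangle^{-1}$. I do not anticipate a genuine obstacle, but I would isolate once, at the very start, the spatial-decay statement that legitimizes representing the solution and its derivatives as radial integrals from infinity.
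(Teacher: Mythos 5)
Your proposal is correct and follows essentially the same route as the paper's proof: part (i) by radial integration of \eqref{est:pointwise1}, part (ii) by combining the near-cone bound \eqref{est:pointwise2} with the $\langle t-r\rangle^{-3/2}$ bound from Lemma~\ref{lem:extra} and \eqref{est:pointwise1} (your explicit crossover at $\langle t-r\rangle=\varepsilon^{-1}$ is just the interpolation the paper invokes, written out), and part (iii) by the same three mechanisms (radial integration of \eqref{est:pointwise2}, radial integration of the freshly proved \eqref{est:partialdouble} with the logarithm absorbed into $\langle t+r\rangle^\delta$, and Lemma~\ref{lem:extra} applied to $\Gamma^Iw$ together with \eqref{est:gamma}). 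The only cosmetic differences are your slightly sharper computation in (i), which avoids the $\delta$-loss, and your added justification that $\Gamma^Iw(t,\cdot)\to 0$ as $|x|\to\infty$, which the paper takes for granted.
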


	\begin{proof}
		\textbf{Proof of (i).} Suppose that $|I|\leq N-2$ with $N\geq5$. By estimate~\eqref{est:pointwise1} and the fundamental theorem of calculus, integrating $\partial_r\Gamma^Iw(t,x=r\omega)$ from $r$ to $+\infty$ with $\omega$ fixed gives
		\begin{equation*}
			\begin{aligned}
				|\Gamma^Iw(t,x)|&\leq\left|\int_{r}^{+\infty}\partial_\rho\Gamma^Iw(t,\rho\omega)\d \rho\right|\leq\int_{r}^{+\infty}\left|\partial_\rho\Gamma^Iw(t,\rho\omega)\right|\d \rho\\
				&\lesssim\int_{r}^{+\infty}C_1\langle t+\rho\rangle^{-1}\langle t-\rho\rangle^{-\frac{1}{2}}\d \rho\\
				&\lesssim\int_{r}^{+\infty}C_1\langle t+\rho\rangle^{-\frac{1}{2}+\delta}\langle t-\rho\rangle^{-1-\delta}\d \rho\\
				&\lesssim C_1\langle t+r\rangle^{-\frac{1}{2}+\delta}.
			\end{aligned}
		\end{equation*}
		Here we have used the fact that $\Gamma^Iw(t,+\infty)=0$, and $\delta\in(0,\frac{1}{12})$ is a small parameter. Recall that $\partial_r=\omega^a\partial_a$.
		
		\textbf{Proof of (ii).}
		Let $|I|\leq N-3$. Applying Lemmas~\ref{lem:commutators},~\ref{lem:extra}, and estimate~\eqref{est:pointwise1}, we have
		\begin{equation*}
			|\partial\partial\Gamma^Iw|\lesssim\langle t-r\rangle^{-1}\sum_{|J|=1}|\Gamma^J\partial\Gamma^Iw|\lesssim C_1\langle t+r\rangle^{-1}\langle t-r\rangle^{-\frac{3}{2}}.
		\end{equation*}
	    
	    Interpolating the above estimate with the pointwise bound~\eqref{est:pointwise2} then yields
	    \begin{equation*}
	    	|\partial\partial\Gamma^Iw|\lesssim C_1\varepsilon^{\frac{1}{2}}\langle t+r\rangle^{-1}\langle t-r\rangle^{-1}, \quad|I|\leq N-3.
	    \end{equation*}
	    
		\textbf{Proof of (iii).} Similarly, for $|I|\leq N-3$, by estimate~\eqref{est:pointwise2} and the fundamental theorem of calculus, integrating $\partial_r\partial\Gamma^Iw(t,x=r\omega)$ from $r$ to $+\infty$ with $\omega$ fixed gives
		\begin{equation*}
			\begin{aligned}
				|\partial\Gamma^Iw(t,x)|&\leq\left|\int_{r}^{+\infty}\partial_\rho\partial\Gamma^Iw(t,\rho\omega)\d \rho\right|\leq\int_{r}^{+\infty}\left|\partial_\rho\partial\Gamma^Iw(t,\rho\omega)\right|\d \rho\\
				&\lesssim\int_{r}^{+\infty}C_1\varepsilon\langle t+\rho\rangle^{-1}\langle t-\rho\rangle^{-\frac{1}{2}}\d \rho\\
				&\lesssim C_1\varepsilon\langle t+r\rangle^{-\frac{1}{2}+\delta}.
			\end{aligned}
		\end{equation*}
		Here we have used the fact that $\partial\Gamma^Iw(t,+\infty)=0$, and $\delta\in(0,\frac{1}{12})$ is a small parameter. 
	    
	    Substituting estimate~\eqref{est:partialdouble} into the above inequality, we obtain
	    \begin{equation*}
	    	\begin{aligned}
	    		|\partial\Gamma^Iw(t,x)|&\leq\left|\int_{r}^{+\infty}\partial_\rho\partial\Gamma^Iw(t,\rho\omega)\d \rho\right|\leq\int_{r}^{+\infty}\left|\partial_\rho\partial\Gamma^Iw(t,\rho\omega)\right|\d \rho\\
	    		&\lesssim C_1\varepsilon^{\frac{1}{2}}\langle t+r\rangle^{-1+\delta}.
	    	\end{aligned}
	    \end{equation*}
	    
	    By using~\eqref{est:gamma} and Lemma~\ref{lem:extra}, we have
	    \begin{equation*}
	    	|\partial\Gamma^Iw|\lesssim\langle t-r\rangle^{-1}\sum_{|J|=1}|\Gamma^J\Gamma^Iw|\lesssim C_1\langle t+r\rangle^{-\frac{1}{2}+\delta}\langle t-r\rangle^{-1}.
	    \end{equation*}
	    This completes the proof.
	\end{proof}

	\begin{lemma}
		Let $N\in \mathbb{N}$ with $N\geq 5$. For all $t\in[0,T^*(w_0,w_1))$, and any $0\leq\lambda\leq1$, we have the following estimate:
		\begin{equation}\label{est:partialrefined}
			|\partial\Gamma^Iw|\lesssim C_1\varepsilon^{\frac{\lambda}{2}}\langle t+r\rangle^{-\frac{\lambda}{2}-\frac{1}{2}+\delta}\langle t-r\rangle^{-1+\lambda},\quad |I|\leq N-3.
		\end{equation}
	\end{lemma}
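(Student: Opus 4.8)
The statement~\eqref{est:partialrefined} is an interpolation between two pointwise bounds already proved, so the plan is to exhibit $|\partial\Gamma^I w|$ as bounded by each of two expressions and then take a weighted geometric mean. First I would recall from~\eqref{est:partial} that, for $|I|\le N-3$,
\[
|\partial\Gamma^I w|\lesssim C_1\varepsilon\,\langle t+r\rangle^{-\frac12+\delta},
\qquad
|\partial\Gamma^I w|\lesssim C_1\,\langle t+r\rangle^{-\frac12+\delta}\langle t-r\rangle^{-1}.
\]
These are exactly the endpoints $\lambda=1$ and $\lambda=0$ of~\eqref{est:partialrefined}: setting $\lambda=1$ in the right-hand side of~\eqref{est:partialrefined} gives $C_1\varepsilon^{1/2}\langle t+r\rangle^{-1+\delta}$, and setting $\lambda=0$ gives $C_1\langle t+r\rangle^{-1/2+\delta}\langle t-r\rangle^{-1}$. (One should note that the $\lambda=1$ endpoint here is the \emph{second} bound in~\eqref{est:partial}, namely $C_1\varepsilon^{1/2}\langle t+r\rangle^{-1+\delta}$, not the first; I would double-check which of the three bounds in~\eqref{est:partial} serve as the two endpoints, but in any case two of them do.)

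The mechanism is Proposition~\ref{prop:holder} applied pointwise — or rather its trivial pointwise analogue, $|a|^{\lambda}|b|^{1-\lambda}$ bounds a quantity that is $\le|a|$ and $\le|b|$. Concretely, writing $A=C_1\varepsilon^{1/2}\langle t+r\rangle^{-1+\delta}$ and $B=C_1\langle t+r\rangle^{-1/2+\delta}\langle t-r\rangle^{-1}$, we have $|\partial\Gamma^I w|\le A$ and $|\partial\Gamma^I w|\le B$, hence $|\partial\Gamma^I w|=|\partial\Gamma^I w|^{\lambda}|\partial\Gamma^I w|^{1-\lambda}\le A^{\lambda}B^{1-\lambda}$ for every $\lambda\in[0,1]$. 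Then I would simply compute
\[
A^{\lambda}B^{1-\lambda}
=C_1\,\varepsilon^{\frac{\lambda}{2}}\,\langle t+r\rangle^{(-1+\delta)\lambda+(-\frac12+\delta)(1-\lambda)}\,\langle t-r\rangle^{-(1-\lambda)}
=C_1\,\varepsilon^{\frac{\lambda}{2}}\,\langle t+r\rangle^{-\frac{\lambda}{2}-\frac12+\delta}\,\langle t-r\rangle^{-1+\lambda},
\]
which is exactly~\eqref{est:partialrefined}. The exponent bookkeeping is the only computation, and it is elementary: the power of $\langle t+r\rangle$ is $-\lambda+\delta\lambda-\tfrac12+\tfrac{\lambda}{2}-\delta+\delta\lambda$, wait — more carefully, $(-1)\lambda+(-\tfrac12)(1-\lambda)+\delta\lambda+\delta(1-\lambda)=-\lambda-\tfrac12+\tfrac{\lambda}{2}+\delta=-\tfrac{\lambda}{2}-\tfrac12+\delta$, as claimed.

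There is no real obstacle here; the lemma is a packaging step that interpolates the previously established pointwise hierarchy into a one-parameter family convenient for the nonlinear estimates to come. The only point requiring mild care is to make sure the two chosen endpoint bounds from~\eqref{est:partial} are precisely the ones whose geometric mean reproduces the stated exponents — in particular that the $\varepsilon$-power, the $\langle t+r\rangle$-power, and the $\langle t-r\rangle$-power all match at $\lambda=0$ and $\lambda=1$ — and then to invoke that $\min\{A,B\}\le A^{\lambda}B^{1-\lambda}$ (equivalently Proposition~\ref{prop:holder} applied to the constant-in-$x$ ``functions'' $A$ and $B$, or just the weighted AM–GM inequality). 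I would state this interpolation explicitly and conclude.
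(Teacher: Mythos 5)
Your proposal is correct and matches the paper's own (one-line) proof, which simply says ``by interpolating the estimates in~\eqref{est:partial}.'' You correctly identify the two endpoint bounds among the three in~\eqref{est:partial} — namely $C_1\varepsilon^{1/2}\langle t+r\rangle^{-1+\delta}$ for $\lambda=1$ and $C_1\langle t+r\rangle^{-1/2+\delta}\langle t-r\rangle^{-1}$ for $\lambda=0$ — and the exponent bookkeeping in the geometric mean $A^\lambda B^{1-\lambda}$ checks out.
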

	
	\begin{proof}
		By interpolating the estimates in~\eqref{est:partial}, we obtain
		\begin{equation*}
			|\partial\Gamma^Iw|\lesssim C_1\varepsilon^{\frac{\lambda}{2}}\langle t+r\rangle^{-\frac{\lambda}{2}-\frac{1}{2}+\delta}\langle t-r\rangle^{-1+\lambda},
		\end{equation*}
	where $0\leq\lambda\leq1.$
	\end{proof}

	\begin{lemma}
		Let $N\in \mathbb{N}$ with $N\geq5$. For all $t\in[0,T^*(w_0,w_1))$, we have the following estimates:
		
		\begin{enumerate}
			\item {\rm {Pointwise estimates on $G\Gamma^Iw$}}. For $|I|\leq N-3$, we have
			\begin{equation}\label{est:Ggamma}
				|G\Gamma^Iw|\lesssim\min\{C_1\langle t+r\rangle^{-\frac{3}{2}+\delta}, C_1\varepsilon^{\frac{1}{4}}\langle t+r\rangle^{-\frac{5}{4}+\delta}\}.
			\end{equation}
		    
		    \item {\rm {Pointwise estimates on $G\partial\Gamma^Iw$}}. For $|I|\leq N-4$, we have
		    \begin{equation}\label{est:Gpartialgamma}
		    	|G\partial\Gamma^Iw|\lesssim \min\{C_1\varepsilon^{\frac{1}{2}}\langle t+r\rangle^{-2+\delta}, C_1\varepsilon\langle t+r\rangle^{-\frac{3}{2}+\delta}\}.
		    \end{equation}
		    
		    \item {\rm {Pointwise estimates on $G\partial\partial\Gamma^Iw$}}. For $|I|\leq N-4$, we have
		    \begin{equation}\label{est:Gpartial2gamma}
		    	|G\partial\partial\Gamma^Iw|\lesssim C_1\varepsilon\langle t+r\rangle^{-2}\langle t-r\rangle^{-\frac{1}{2}}.
		    \end{equation}
		    
		\end{enumerate}
	\end{lemma}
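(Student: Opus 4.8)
The plan is to obtain all three estimates from the good-derivative control of Lemma~\ref{lem:extra}, namely $\langle t+r\rangle|G_a u|\lesssim\sum_{|J|=1}|\Gamma^J u|$, applied in turn to $u=\Gamma^I w$, $u=\partial\Gamma^I w$ and $u=\partial\partial\Gamma^I w$, together with the commutator bound~\eqref{est:commutators} and the pointwise estimates~\eqref{est:gamma}, \eqref{est:partial}, \eqref{est:pointwise2} already in hand. After each application of Lemma~\ref{lem:extra} one commutes the extra field $\Gamma^J$ (with $|J|=1$) past the derivatives using Lemma~\ref{lem:commutators}; since the commutator $[\partial_\alpha,\Gamma^J]$ is again a constant-coefficient first-order operator, this produces only terms of the same differential type and of order at most one higher, so that $|\Gamma^J\partial\Gamma^I w|\lesssim\sum_{|K|\leq|I|+1}|\partial\Gamma^K w|$ and $|\Gamma^J\partial\partial\Gamma^I w|\lesssim\sum_{|K|\leq|I|+1}|\partial\partial\Gamma^K w|$. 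The one point to watch is order bookkeeping: after the single commutation the resulting multi-indices must remain within the range of validity of whichever pointwise bound is invoked, which is exactly why (ii) and (iii) are stated for $|I|\leq N-4$.

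For (i), applying Lemma~\ref{lem:extra} to $\Gamma^I w$ with $|I|\leq N-3$ gives $|G\Gamma^I w|\lesssim\langle t+r\rangle^{-1}\sum_{|K|\leq N-2}|\Gamma^K w|$, and~\eqref{est:gamma} yields the first member $C_1\langle t+r\rangle^{-\frac32+\delta}$ of the minimum in~\eqref{est:Ggamma}. The second member comes from interpolating this with the elementary inequality $|G\Gamma^I w|\lesssim|\partial\Gamma^I w|$ (from $G_a=\omega_a\partial_t+\partial_a$) and the bound $|\partial\Gamma^I w|\lesssim C_1\varepsilon^{\frac12}\langle t+r\rangle^{-1+\delta}$ from~\eqref{est:partial}: the geometric mean, via Proposition~\ref{prop:holder} with $\lambda=\frac12$, gives $C_1\varepsilon^{\frac14}\langle t+r\rangle^{-\frac54+\delta}$.

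For (ii), Lemma~\ref{lem:extra} applied to $\partial\Gamma^I w$ with $|I|\leq N-4$, followed by the commutator reduction, gives $|G\partial\Gamma^I w|\lesssim\langle t+r\rangle^{-1}\sum_{|K|\leq N-3}|\partial\Gamma^K w|$; inserting $|\partial\Gamma^K w|\lesssim C_1\varepsilon^{\frac12}\langle t+r\rangle^{-1+\delta}$ from~\eqref{est:partial} produces $C_1\varepsilon^{\frac12}\langle t+r\rangle^{-2+\delta}$, and inserting the other member $|\partial\Gamma^K w|\lesssim C_1\varepsilon\langle t+r\rangle^{-\frac12+\delta}$ produces $C_1\varepsilon\langle t+r\rangle^{-\frac32+\delta}$, which together give~\eqref{est:Gpartialgamma}. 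For (iii), the same scheme applied to $\partial\partial\Gamma^I w$ with $|I|\leq N-4$ gives $|G\partial\partial\Gamma^I w|\lesssim\langle t+r\rangle^{-1}\sum_{|K|\leq N-3}|\partial\partial\Gamma^K w|$, and~\eqref{est:pointwise2}, valid for $|K|\leq N-3$, closes it with $C_1\varepsilon\langle t+r\rangle^{-2}\langle t-r\rangle^{-\frac12}$.

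I do not expect a genuine obstacle here: this lemma is a routine consequence of the tools already assembled. The only things demanding care are the order count after each commutation (so that every pointwise bound is used strictly within its stated range of multi-indices) and the selection of the correct member of the minima in~\eqref{est:partial} to generate each half of the minima in~\eqref{est:Ggamma} and~\eqref{est:Gpartialgamma}.
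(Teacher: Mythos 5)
Your proof is correct and follows essentially the same route as the paper: for each part apply Lemma~\ref{lem:extra} to $\Gamma^I w$, $\partial\Gamma^I w$, or $\partial\partial\Gamma^I w$, commute the extra vector field past the translation derivatives via Lemma~\ref{lem:commutators}, and then invoke the appropriate previously established pointwise bound from~\eqref{est:gamma}, \eqref{est:partial}, and \eqref{est:pointwise2}, interpolating where needed. Your order bookkeeping (explaining why $|I|\leq N-4$ in (ii) and (iii) keeps the commuted indices within range) and the interpolation for the second member of~\eqref{est:Ggamma} match the paper's argument exactly.
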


	\begin{proof}
		\textbf{Proof of (i).} First, for $|I|\leq N-3$, by estimate~\eqref{est:gamma} and Lemma~\ref{lem:extra}, we obtain
		\begin{equation*}
			|G\Gamma^Iw|\lesssim\langle t+r\rangle^{-1}\sum_{|J|=1}|\Gamma^J\Gamma^Iw|\lesssim C_1\langle t+r\rangle^{-\frac{3}{2}+\delta}.
		\end{equation*}
	    
	    By the definition of $G$ and \eqref{est:partial}, we deduce
	    \begin{equation*}
	    	|G\Gamma^Iw|\lesssim|\partial\Gamma^Iw|\lesssim C_1\varepsilon^{\frac{1}{2}}\langle t+r\rangle^{-1+\delta},\quad|I|\leq N-3.
	    \end{equation*}
	    
	    By interpolating the above estimates, we obtain
	    \begin{equation*}
	    	|G\Gamma^Iw|\lesssim C_1\varepsilon^{\frac{1}{4}}\langle t+r\rangle^{-\frac{5}{4}+\delta}.
	    \end{equation*}

	    \textbf{Proof of (ii).}
	    For $|I|\leq N-4$, Lemma~\ref{lem:extra} yields
	    \begin{equation*}
	    	|G\partial\Gamma^Iw|\lesssim\langle t+r\rangle^{-1}\sum_{|J|=1}|\Gamma^J\partial\Gamma^Iw|.
	    \end{equation*}
        Then, by Lemma~\ref{lem:commutators} and estimate~\eqref{est:partial}, we get the desired results.
    
        \textbf{Proof of (iii).}
        For $|I|\leq N-4$, by~\eqref{est:pointwise2}, we have
        \begin{equation*}
        	|G\partial\partial\Gamma^Iw|\lesssim\langle t+r\rangle^{-1}\sum_{|J|=1}|\Gamma^J\partial\partial\Gamma^Iw|\lesssim C_1\varepsilon\langle t+r\rangle^{-2}\langle t-r\rangle^{-\frac{1}{2}}.
        \end{equation*}
    Here we have used Lemmas~\ref{lem:commutators} and~\ref{lem:extra}.
    
    This completes the proof.
	\end{proof}

	In the following, we establish estimates for the nonlinear terms in the equation for $w$ under the bootstrap assumptions~\eqref{est:Bootw}. These estimates are crucial for the proof of Proposition~\ref{pro:main}.
	
	\begin{lemma}\label{est:L1L2nonlinear}
		Let $N\in\mathbb{N}$ with $N\geq5$. For all $t\in[0,T^*(w_0,w_1))$, the following estimates hold.
		
		\begin{enumerate}
			\item {\rm {$L^1_tL^2_x$ estimates on $\Gamma^IQ_P(w,w)$}}. For $|I|\leq N-1$, we have
			\begin{equation*}
				\int_{0}^{t}\|\Gamma^IQ_P(w,w)\|\d s\lesssim C_1^2\varepsilon.
			\end{equation*}
			
			\item {\rm {$L^1_tL^2_x$ estimates on $\Gamma^IQ_0(w,w)$}}. For $|I|\leq N-1$, we have
			\begin{equation*}
				\int_{0}^{t}\|\Gamma^IQ_0(w,w)\|\d s\lesssim C_1^2\varepsilon^{\frac{1}{4}}.
			\end{equation*}
		
		    \item {\rm {$L^1_tL^2_x$ estimates on $\partial\Gamma^IQ_P(w,w)$}}. For $|I|\leq N-2$, we have
		    \begin{equation*}
		    	\int_{0}^{t}\|\partial\Gamma^IQ_P(w,w)\|\d s\lesssim C_1^2\varepsilon^{\frac{5}{4}}.
		    \end{equation*}
		    
		    \item {\rm {$L^1_tL^2_x$ estimates on $\partial\Gamma^IQ_0(w,w)$}}. For $|I|\leq N-2$, we have
		    \begin{equation*}
		    	\int_{0}^{t}\|\partial\Gamma^IQ_0(w,w)\|\d s\lesssim C_1^2\varepsilon^{\frac{5}{4}}.
		    \end{equation*}
			
		\end{enumerate}
		
	\end{lemma}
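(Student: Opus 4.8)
The plan is to estimate each of the four quantities $\Gamma^I Q_P(w,w)$, $\Gamma^I Q_0(w,w)$, $\partial\Gamma^I Q_P(w,w)$, $\partial\Gamma^I Q_0(w,w)$ in $L^2_x$ pointwise in $t$, then integrate in time, using the null-form structure crucially. The starting point for (i) and (ii) is the commutation estimates~\eqref{est:P2} and~\eqref{est:Q0_1}, which reduce $\Gamma^I Q_P(w,w)$ to a sum of $Q_P(\Gamma^{I_1}w,\Gamma^{I_2}w)$ with $|I_1|+|I_2|\le |I|\le N-1$, and similarly for $Q_0$; then~\eqref{est:P1} and~\eqref{est:Q_0} let us replace each null form by a bilinear expression in which at least one factor carries a good derivative $G$ or is $Gu$. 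For (iii) and (iv) the same applies after peeling off one more $\partial$, landing in the range $|I_1|+|I_2|\le N-1$ still (since $|I|\le N-2$ and the extra $\partial$ counts as one unit).

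First I would split, in each bilinear product, into the "low–high" and "high–low" cases according to which factor carries $\ge \lceil (N-1)/2\rceil$ derivatives; since $N\ge 5$, the low-order factor always has order $\le N-3$ (indeed $\le N-2$ suffices for the $L^\infty$ bounds~\eqref{est:pointwise1}--\eqref{est:pointwise2}, and $\le N-4$ where the $G\partial$ pointwise bounds~\eqref{est:Gpartialgamma} are needed), so its $L^\infty$-norm is controlled by the pointwise estimates~\eqref{est:gamma}--\eqref{est:Gpartial2gamma}, while the high-order factor is placed in $L^2_x$ using~\eqref{est:L2norm1}--\eqref{est:L2norm2} or the weighted versions with the ghost-weight integral. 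The key mechanism is that the good-derivative pointwise bounds carry an extra power of $\langle t+r\rangle^{-1/2}$ (compare~\eqref{est:Ggamma} with~\eqref{est:partial}, or~\eqref{est:Gpartialgamma} with~\eqref{est:pointwise2}), and the extra $\varepsilon$-powers in the small-energy bounds~\eqref{est:L2norm2}, \eqref{est:pointwise2}, \eqref{est:partialdouble} supply the stated $\varepsilon$-gains ($\varepsilon$, $\varepsilon^{1/4}$, $\varepsilon^{5/4}$, $\varepsilon^{5/4}$ respectively). When the high-order factor sits inside the ghost-weight energy (i.e. is of the form $G\Gamma^{I_2}w$ or $G\partial\Gamma^{I_2}w$), I would instead pair the $\langle r-s\rangle^{-(1+2\delta)/2}$ weight from the energy with a matching $\langle t-r\rangle^{(1+2\delta)/2}$ bound on the $L^\infty$ factor — available from~\eqref{est:partial} or~\eqref{est:partialrefined} — and use Cauchy–Schwarz in $(x,s)$ together with~\eqref{est:L2norm1}--\eqref{est:L2norm2} to absorb the spacetime integral; the leftover $t$-integral of $\langle s\rangle^{-1-\delta}$-type decay then converges.

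Concretely, for (ii) the worst term is $Q_0(w,w)$ itself ($I=0$): by~\eqref{est:Q_0} it is bounded by $|Gw||\partial w|$, and using~\eqref{est:Ggamma} for $|Gw|\lesssim C_1\varepsilon^{1/4}\langle t+r\rangle^{-5/4+\delta}$ and~\eqref{est:L2norm1} for $\|\partial w\|\lesssim C_1$, the $L^2_x$ norm is $\lesssim C_1^2\varepsilon^{1/4}\langle s\rangle^{-5/4+\delta}$, which integrates to $C_1^2\varepsilon^{1/4}$; the high-order terms are better because the high factor then gets the $\varepsilon$ or the decay from~\eqref{est:L2norm2}. For (i), (iii), (iv) the structure $|Q_P|\lesssim |Gu||\partial\partial v|+|\partial u||G\partial v|$ forces a second derivative somewhere, which is exactly where the $\varepsilon$ (from~\eqref{est:L2norm2} or~\eqref{est:pointwise2}) and the extra $G$-decay (from~\eqref{est:Gpartialgamma}, \eqref{est:Gpartial2gamma}) enter, producing the higher powers of $\varepsilon$. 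The main obstacle I anticipate is bookkeeping the top-order cases: when $|I|=N-1$ in (i)–(ii), one factor may carry $N-1$ vector fields and thus only~\eqref{est:L2norm1} (not a pointwise bound) applies to it, so the other factor must be estimated in $L^\infty$ at order $\le N-3$ and must be a good-derivative factor to gain enough decay for time-integrability — this is precisely what the null condition guarantees via Lemma~\ref{lem:P&Q}, and checking that every term genuinely lands in a case where a $G$ falls on the low-order factor (or the decay/$\varepsilon$ budget otherwise closes) is the delicate point. No single estimate is hard; the care is in verifying the case analysis is exhaustive and that the exponents $-5/4+\delta$, $-3/2+\delta$, etc., all stay below $-1$ so the final $ds$-integrals converge uniformly in $t$.
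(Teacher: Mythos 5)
Your high-level plan — reduce via Lemma~\ref{lem:P&Q}, split each bilinear term low–high/high--low, put the low-order factor in $L^\infty$ with the pointwise bounds, put the high-order factor either in $L^2_x$ via~\eqref{est:L2norm1}--\eqref{est:L2norm2} or inside the ghost-weight space-time integral paired with a matching $\langle s-r\rangle^{1/2+\delta}$ weight on the $L^\infty$ factor — is indeed the paper's strategy, and it suffices for parts (i) and (ii). Your identification of the worst term in (ii) and its $\langle s\rangle^{-5/4+\delta}$ decay is correct.

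However, there is a genuine gap in your treatment of (iii) and (iv). In several high--low configurations the recipe you describe does not close. Concretely, consider $\mathcal{II}_2 = \|\partial\Gamma^{I_1}w\,G\partial\partial\Gamma^{I_2}w\|$ with $|I_1|$ near $N-2$ and $|I_2|$ small. Putting $G\partial\partial\Gamma^{I_2}w$ in $L^\infty$ via~\eqref{est:Gpartial2gamma} and $\partial\Gamma^{I_1}w$ in $L^2$ via~\eqref{est:L2norm1} gives only $C_1^2\varepsilon\langle s\rangle^{-2}$, i.e.\ $C_1^2\varepsilon$ after integration — not $C_1^2\varepsilon^{5/4}$, and $C_1^2\varepsilon$ is not enough to close the bootstrap for $\mathcal{E}_{gst}^{1/2}(t,\partial\Gamma^Iw)\leq C_1\varepsilon$ since $C_1>1$. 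Worse, for $\mathcal{II}_3 = \|G\Gamma^{I_1}w\,\partial\partial\partial\Gamma^{I_2}w\|$ with $|I_1|$ near $N-2$, the only available $L^2$ control of $G\Gamma^{I_1}w$ (via $|G|\lesssim|\partial|$) gives $C_1^2\varepsilon\langle s\rangle^{-1}$, which is not even time-integrable. The missing ingredient is the Hardy--Hölder interpolation used in the paper (Propositions~\ref{prop:holder} and~\ref{prop:hardy}), e.g.
\begin{equation*}
\left\|\frac{\partial\Gamma^{I_1}w}{\langle r\rangle^{1/4}}\right\|\lesssim\|\partial\partial\Gamma^{I_1}w\|^{1/4}\|\partial\Gamma^{I_1}w\|^{3/4}\lesssim C_1\varepsilon^{1/4},
\qquad
\left\|\frac{G\Gamma^{I_1}w}{\langle r-s\rangle^{\frac{3}{4}(\frac{1}{2}+\delta)}\langle r\rangle^{1/4}}\right\|\lesssim\|\partial\partial\Gamma^{I_1}w\|^{1/4}\left\|\frac{G\Gamma^{I_1}w}{\langle r-s\rangle^{\frac{1}{2}+\delta}}\right\|^{3/4}.
\end{equation*}
This transfers a fractional radial weight $\langle r\rangle^{1/4}$ onto the pointwise factor (improving its decay) while extracting $\varepsilon^{1/4}$ from the $L^2$ factor through $\|\partial\partial\Gamma^{I_1}w\|\lesssim C_1\varepsilon$. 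Without this device, the exponent bookkeeping you propose to "verify" will reveal that the budget does not close in the cases above; you need this additional idea, not just more careful case analysis, to obtain the $\varepsilon^{5/4}$ in (iii)--(iv).
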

	
	\begin{proof}
		\textbf{Proof of (i).} Let $|I|\leq N-1$. Applying Lemma~\ref{lem:P&Q}, we obtain
		\begin{equation*}
			\begin{aligned}
				&\|\Gamma^IQ_P(w,w)\|\lesssim\sum_{|I_1|+|I_2|\leq|I|}\|Q_P(\Gamma^{I_1}w,\Gamma^{I_2}w)\|\\
				&\lesssim\sum_{\substack{|I_1|+|I_2|\leq|I|}}\|G\Gamma^{I_1}w\partial\partial\Gamma^{I_2}w\|
				+\sum_{\substack{|I_1|+|I_2|\leq|I|}}\|\partial\Gamma^{I_1}wG\partial\Gamma^{I_2}w\|
				=\mathcal{I}_1+\mathcal{I}_2.
			\end{aligned}
		\end{equation*}
	
	\textbf{Bound for $\mathcal{I}_1.$} Estimates~\eqref{est:L2norm2},~\eqref{est:pointwise2},~\eqref{est:Ggamma}, and the H\"{o}lder inequality imply
	\begin{equation}\label{est:I_1}
		\begin{aligned}
			\mathcal{I}_1&\lesssim\sum_{\substack{|I_1|+|I_2|\leq|I|\\|I_2|\leq N-3}}\left\|\frac{G\Gamma^{I_1}w}{\langle s-r\rangle^{\frac{1}{2}+\delta}}\right\|\|\langle s-r\rangle^{\frac{1}{2}+\delta}\partial\partial\Gamma^{I_2}w\|_{L^\infty}\\
			&+\sum_{\substack{|I_1|+|I_2|\leq|I|\\|I_1|\leq N-3}}\|G\Gamma^{I_1}w\|_{L^\infty}\|\partial\partial\Gamma^{I_2}w\|\\
			&\lesssim C_1\varepsilon\langle s\rangle^{-1+\delta}\sum_{\substack{|I_1|\leq|I|}}\left\|\frac{G\Gamma^{I_1}w}{\langle s-r\rangle^{\frac{1}{2}+\delta}}\right\|+C_1^2\varepsilon^{\frac{5}{4}}\langle s\rangle^{-\frac{5}{4}+\delta}.
		\end{aligned}
	\end{equation}

    \textbf{Bound for $\mathcal{I}_2.$} Using the H\"{o}lder inequality again, we get
    \begin{equation}\label{est:I_2}
    	\begin{aligned}
    		\mathcal{I}_2&\lesssim\sum_{\substack{|I_1|+|I_2|\leq|I|\\|I_1|\leq N-2}}\|\langle s-r\rangle^{\frac{1}{2}+\delta}\partial\Gamma^{I_1}w\|_{L^\infty}\left\|\frac{G\partial\Gamma^{I_2}w}{\langle s-r\rangle^{\frac{1}{2}+\delta}}\right\|\\
    		&+\sum_{\substack{|I_1|+|I_2|\leq|I|\\|I_2|\leq N-4}}\|\partial\Gamma^{I_1}w\|\|G\partial\Gamma^{I_2}w\|_{L^\infty}\\
    		&\lesssim C_1^2\varepsilon\langle s\rangle^{-\frac{3}{2}+\delta}+C_1\langle s\rangle^{-1+\delta}\sum_{\substack{|I_2|\leq|I|}}\left\|\frac{G\partial\Gamma^{I_2}w}{\langle s-r\rangle^{\frac{1}{2}+\delta}}\right\|,
    	\end{aligned}
    \end{equation}
	    where we have used estimates~\eqref{est:L2norm1},~\eqref{est:pointwise1}, and~\eqref{est:Gpartialgamma} in the last inequality.
	
	    By the H\"{o}lder inequality,~\eqref{est:I_1},~\eqref{est:I_2}, \eqref{est:L2norm1}, and \eqref{est:L2norm2}, we obtain
	    \begin{equation*}
	    	\int_{0}^{t}\|\Gamma^IQ_P(w,w)\|\d s\lesssim C_1^2\varepsilon.
	    \end{equation*}
		
		\textbf{Proof of (ii).} Ler $|I|\leq N-1$. Combining Lemma~\ref{lem:P&Q} with the H\"{o}lder inequality, we obtain
		\begin{equation*}
			\begin{aligned}
				&\|\Gamma^IQ_0(w,w)\|\lesssim\sum_{\substack{|I_1|+|I_2|\leq|I|}}\|G\Gamma^{I_1}w\partial\Gamma^{I_2}w\|\\
				&\lesssim\sum_{\substack{|I_1|+|I_2|\leq|I|\\|I_2|\leq N-3}}\left\|\frac{G\Gamma^{I_1}w}{\langle s-r\rangle^{\frac{1}{2}+\delta}}\right\|\|\langle s-r\rangle^{\frac{1}{2}+\delta}\partial\Gamma^{I_2}w\|_{L^\infty}+\sum_{\substack{|I_1|+|I_2|\leq|I|\\|I_1|\leq N-3}}\|G\Gamma^{I_1}w\|_{L^\infty}\|\partial\Gamma^{I_2}w\|\\
				&\lesssim C_1\varepsilon^{\frac{1}{4}}\langle s\rangle^{-\frac{3}{4}+2\delta}\sum_{\substack{|I_1|\leq|I|}}\left\|\frac{G\Gamma^{I_1}w}{\langle s-r\rangle^{\frac{1}{2}+\delta}}\right\|+C_1^2\varepsilon^{\frac{1}{4}}\langle s\rangle^{-\frac{5}{4}+\delta}.
			\end{aligned}
		\end{equation*}
	    Here we have used estimates~\eqref{est:L2norm1},~\eqref{est:Ggamma}, and~\eqref{est:partialrefined} (with $\lambda=\frac{1}{2}$). By using the H\"{o}lder inequality and \eqref{est:L2norm1} again, we obtain
		\begin{equation*}
			\int_{0}^{t}\|\Gamma^IQ_0(w,w)\|\d s\lesssim C_1^2\varepsilon^{\frac{1}{4}}.
		\end{equation*}
	    
	    \textbf{Proof of (iii).} Let $|I|\leq N-2$. By Lemma~\ref{lem:P&Q}, we get
	    
	    \begin{equation}\label{est:II}
	    	\begin{aligned}
	    		&\|\partial\Gamma^IQ_P(w,w)\|\lesssim\sum_{\substack{|I_1|+|I_2|\leq|I|}}\left(\|Q_P(\partial\Gamma^{I_1}w,\Gamma^{I_2}w)\|+\|Q_P(\Gamma^{I_1}w,\partial\Gamma^{I_2}w)\|\right)\\
	    		&\lesssim\sum_{\substack{|I_1|+|I_2|\leq|I|}}\left(\|G\partial\Gamma^{I_1}w\partial\partial\Gamma^{I_2}w\|+\|\partial\Gamma^{I_1}wG\partial\partial\Gamma^{I_2}w\|+\|G\Gamma^{I_1}w\partial\partial\partial\Gamma^{I_2}w\|\right)\\
	    		&=\mathcal{II}_1+\mathcal{II}_2+\mathcal{II}_3.
	    	\end{aligned}
	    \end{equation}
	    
	    \textbf{Bound for $\mathcal{II}_1.$} Estimates~\eqref{est:L2norm2},~\eqref{est:pointwise2},~\eqref{est:Gpartialgamma}, and the H\"{o}lder inequality imply
	    \begin{equation*}
	    	\begin{aligned}
	    		\mathcal{II}_1&\lesssim\sum_{\substack{|I_1|+|I_2|\leq|I|\\|I_2|\leq N-3}}\left\|\frac{G\partial\Gamma^{I_1}w}{\langle s-r\rangle^{\frac{1}{2}+\delta}}\right\|\|\langle s-r\rangle^{\frac{1}{2}+\delta}\partial\partial\Gamma^{I_2}w\|_{L^\infty}\\
	    		&+\sum_{\substack{|I_1|+|I_2|\leq|I|\\|I_1|\leq N-4}}\|G\partial\Gamma^{I_1}w\|_{L^\infty}\|\partial\partial\Gamma^{I_2}w\|\\
	    		&\lesssim C_1\varepsilon\langle s\rangle^{-1+\delta}\sum_{\substack{|I_1|\leq|I|}}\left\|\frac{G\partial\Gamma^{I_1}w}{\langle s-r\rangle^{\frac{1}{2}+\delta}}\right\|+(C_1\varepsilon)^2\langle s\rangle^{-\frac{3}{2}+\delta}.
	    	\end{aligned}
	    \end{equation*}

        \textbf{Bound for $\mathcal{II}_2.$} Using the H\"{o}lder inequality again, we get
        \begin{equation*}
        	\begin{aligned}
        		\mathcal{II}_2&\lesssim\sum_{\substack{|I_1|+|I_2|\leq|I|\\|I_2|\leq N-4}}\left\|\frac{\partial\Gamma^{I_1}w}{\langle r\rangle^{\frac{1}{4}}}\right\|\|\langle r\rangle^{\frac{1}{4}}G\partial\partial\Gamma^{I_2}w\|_{L^\infty}\\
        		&+\sum_{\substack{|I_1|+|I_2|\leq|I|\\|I_1|\leq N-3,|J|\leq1}}\|\langle s+r\rangle^{-1}\partial\Gamma^{I_1}w\|_{L^\infty}\|\partial\partial\Gamma^J\Gamma^{I_2}w\|,
        	\end{aligned}
        \end{equation*}
        where we have used the following estimate
        \begin{equation*}
        	|G\partial\partial\Gamma^{I_2}w|\lesssim\langle s+r\rangle^{-1}\sum_{|J|=1}|\Gamma^J\partial\partial\Gamma^{I_2}w|.
        \end{equation*}
	    
	    Propositions~\ref{prop:holder} and~\ref{prop:hardy} give
	    \begin{equation*}
	    	\left\|\frac{\partial\Gamma^{I_1}w}{\langle r\rangle^{\frac{1}{4}}}\right\|\lesssim\|\partial\partial\Gamma^{I_1}w\|^{\frac{1}{4}}\|\partial\Gamma^{I_1}w\|^{\frac{3}{4}}.
	    \end{equation*}
	    
	    By estimates~\eqref{est:L2norm1},~\eqref{est:L2norm2},~\eqref{est:partial}, and~\eqref{est:Gpartial2gamma}, we obtain
	    \begin{equation*}
	    	\mathcal{II}_2\lesssim C_1^2\varepsilon^{\frac{5}{4}}\langle s\rangle^{-\frac{3}{2}+\delta}.
	    \end{equation*}
	    
	    \textbf{Bound for $\mathcal{II}_3.$} Using the H\"{o}lder inequality again, we get
	    \begin{equation}\label{est:II3}
	    	\begin{aligned}
	    		\mathcal{II}_3&\lesssim\sum_{\substack{|I_1|+|I_2|\leq|I|\\|I_2|\leq N-4}}\left\|\frac{G\Gamma^{I_1}w}{\langle s-r\rangle^{\frac{3}{4}(\frac{1}{2}+\delta)}\langle r\rangle^{\frac{1}{4}}}\right\|\|\langle s-r\rangle^{\frac{3}{4}(\frac{1}{2}+\delta)}\langle r\rangle^{\frac{1}{4}}\partial\partial\partial\Gamma^{I_2}w\|_{L^\infty}\\
	    		&+\sum_{\substack{|I_1|+|I_2|\leq|I|\\|I_1|\leq N-3}}\|G\Gamma^{I_1}w\|_{L^\infty}\|\partial\partial\partial\Gamma^{I_2}w\|.
	    	\end{aligned}
	    \end{equation}
    Propositions~\ref{prop:holder} and~\ref{prop:hardy} give the following estimate
    \begin{equation}\label{est:GGammaL2}
    	\begin{aligned}
    		&\left\|\frac{G\Gamma^{I_1}w}{\langle r-s\rangle^{\frac{3}{4}(\frac{1}{2}+\delta)}\langle r\rangle^{\frac{1}{4}}}\right\|\lesssim\left\|\frac{G\Gamma^{I_1}w}{\langle r\rangle}\right\|^{\frac{1}{4}}\left\|\frac{G\Gamma^{I_1}w}{\langle r-s\rangle^{\frac{1}{2}+\delta}}\right\|^{\frac{3}{4}}\\
    		&\lesssim\|\partial_rG\Gamma^{I_1}w\|^{\frac{1}{4}}\left\|\frac{G\Gamma^{I_1}w}{\langle r-s\rangle^{\frac{1}{2}+\delta}}\right\|^{\frac{3}{4}}
    		\lesssim\|\partial\partial\Gamma^{I_1}w\|^{\frac{1}{4}}\left\|\frac{G\Gamma^{I_1}w}{\langle r-s\rangle^{\frac{1}{2}+\delta}}\right\|^{\frac{3}{4}}.
    	\end{aligned}
    \end{equation}
    
    Substituting the above inequality~\eqref{est:GGammaL2} into~\eqref{est:II3}, we obtain
	\begin{equation*}
		\begin{aligned}
			\mathcal{II}_3
			&\lesssim(C_1\varepsilon)^{\frac{5}{4}}\langle s\rangle^{-\frac{3}{4}}\sum_{\substack{|I_1|\leq|I|}}\left\|\frac{G\Gamma^{I_1}w}{\langle s-r\rangle^{\frac{1}{2}+\delta}}\right\|^{\frac{3}{4}}+C_1^2\varepsilon^{\frac{5}{4}}\langle s\rangle^{-\frac{5}{4}+\delta}.
		\end{aligned}
	\end{equation*}
	Here we have used estimates~\eqref{est:L2norm2},~\eqref{est:pointwise2}, and \eqref{est:Ggamma}.

	    Substituting bounds for $\mathcal{II}_1, \mathcal{II}_2, \mathcal{II}_3$ into~\eqref{est:II}, we obtain
	    \begin{equation*}
	    	\int_{0}^{t}\|\partial\Gamma^IQ_n(w,w)\|\d s\lesssim C_1^2\varepsilon^{\frac{5}{4}}.
	    \end{equation*}
	    Here we have used the H\"{o}lder inequality, \eqref{est:L2norm1}, and \eqref{est:L2norm2}. 
	    
	    \textbf{Proof of (iv).} Ler $|I|\leq N-2$. By Lemma~\ref{lem:P&Q}, we have
	    \begin{equation*}
	    	\begin{aligned}
	    		&\|\partial\Gamma^IQ_0(w,w)\|\lesssim\sum_{\substack{|I_1|+|I_2|\leq|I|}}(\|Q_0(\partial\Gamma^{I_1}w,\Gamma^{I_2}w)\|+\|Q_0(\Gamma^{I_1}w,\partial\Gamma^{I_2}w)\|)\\
	    		&\lesssim\sum_{\substack{|I_1|+|I_2|\leq|I|}}(\|G\partial\Gamma^{I_1}w\partial\Gamma^{I_2}w\|+\|\partial\partial\Gamma^{I_1}wG\Gamma^{I_2}w\|)=\mathcal{III}_1+\mathcal{III}_2.
	    	\end{aligned}
	    \end{equation*}
        
        \textbf{Bound for $\mathcal{III}_1.$} It follows from the H\"{o}lder inequality that
        \begin{equation}\label{est:III1}
        	\begin{aligned}
        		\mathcal{III}_1&\lesssim\sum_{\substack{|I_1|+|I_2|\leq|I|\\|I_2|\leq N-3}}\left\|\frac{G\partial\Gamma^{I_1}w}{\langle s-r\rangle^{\frac{1}{2}+\delta}}\right\|\|\langle s-r\rangle^{\frac{1}{2}+\delta}\partial\Gamma^{I_2}w\|_{L^\infty}\\
        		&+\sum_{\substack{|I_1|+|I_2|\leq|I|\\|I_1|\leq N-4}}\|\langle r\rangle^{\frac{1}{4}}G\partial\Gamma^{I_1}w\|_{L^\infty}\left\|\frac{\partial\Gamma^{I_2}w}{\langle r\rangle^{\frac{1}{4}}}\right\|.
        	\end{aligned}
        \end{equation}
        
        By Propositions~\ref{prop:holder},~\ref{prop:hardy},~\eqref{est:L2norm1}, and~\eqref{est:L2norm2}, we obtain
        \begin{equation*}
        	\begin{aligned}
        		\left\|\frac{\partial\Gamma^{I_2}w}{\langle r\rangle^{\frac{1}{4}}}\right\|\lesssim\left\|\frac{\partial\Gamma^{I_2}w}{\langle r\rangle}\right\|^{\frac{1}{4}}\|\partial\Gamma^{I_2}w\|^{\frac{3}{4}}\lesssim\|\partial\partial\Gamma^{I_2}w\|^{\frac{1}{4}}\|\partial\Gamma^{I_2}w\|^{\frac{3}{4}}\lesssim C_1\varepsilon^{\frac{1}{4}}.
        	\end{aligned}
        \end{equation*}
        Substituting the above estimate into~\eqref{est:III1} and using \eqref{est:partialrefined} (with $\lambda=\frac{1}{2}$) together with~\eqref{est:Gpartialgamma}, we obtain
        \begin{equation*}
        	\begin{aligned}
        		\mathcal{III}_1\lesssim C_1\varepsilon^{\frac{1}{4}}\langle s\rangle^{-\frac{3}{4}+2\delta}\sum_{\substack{|I_1|\leq|I|}}\left\|\frac{G\partial\Gamma^{I_1}w}{\langle s-r\rangle^{\frac{1}{2}+\delta}}\right\|+C_1^2\varepsilon^{\frac{5}{4}}\langle s\rangle^{-\frac{5}{4}+\delta}.
        	\end{aligned}
        \end{equation*}
        
        \textbf{Bound for $\mathcal{III}_2.$} Using the H\"{o}lder inequality again, we get
        \begin{equation}\label{est:III2}
        	\begin{aligned}
        		\mathcal{III}_2&\lesssim\sum_{\substack{|I_1|+|I_2|\leq|I|\\|I_2|\leq N-3}}\|\partial\partial\Gamma^{I_1}w\|\|G\Gamma^{I_2}w\|_{L^\infty}\\
        		&+\sum_{\substack{|I_1|+|I_2|\leq|I|\\|I_1|\leq N-3}}\|\langle s-r\rangle^{\frac{3}{4}(\frac{1}{2}+\delta)}\langle r\rangle^{\frac{1}{4}}\partial\partial\Gamma^{I_1}w\|_{L^\infty}\left\|\frac{G\Gamma^{I_2}w}{\langle s-r\rangle^{\frac{3}{4}(\frac{1}{2}+\delta)}\langle r\rangle^{\frac{1}{4}}}\right\|.
        	\end{aligned}
        \end{equation}

        Estimates~\eqref{est:GGammaL2} and \eqref{est:L2norm2} yield
        \begin{equation*}
        	\begin{aligned}
        		&\left\|\frac{G\Gamma^{I_2}w}{\langle s-r\rangle^{\frac{3}{4}(\frac{1}{2}+\delta)}\langle r\rangle^{\frac{1}{4}}}\right\|
        		\lesssim(C_1\varepsilon)^{\frac{1}{4}}\left\|\frac{G\Gamma^{I_2}w}{\langle s-r\rangle^{\frac{1}{2}+\delta}}\right\|^{\frac{3}{4}}.
        	\end{aligned}
        \end{equation*}
        
        Substituting the above estimate into~\eqref{est:III2}, we obtain
        \begin{equation*}
        	\begin{aligned}
        		\mathcal{III}_2\lesssim C_1^2\varepsilon^{\frac{5}{4}}\langle s\rangle^{-\frac{5}{4}+\delta}+(C_1\varepsilon)^{\frac{5}{4}}\langle s\rangle^{-\frac{3}{4}}\sum_{\substack{|I_2|\leq|I|}}\left\|\frac{G\Gamma^{I_2}w}{\langle s-r\rangle^{\frac{1}{2}+\delta}}\right\|^{\frac{3}{4}}.
        	\end{aligned}
        \end{equation*}
        
	    By the H\"{o}lder inequality and \eqref{est:L2norm1}, we obtain
	    \begin{equation*}
	    		\int_{0}^{t}\|\partial\Gamma^IQ_0(w,w)\|\d s
	    		\lesssim C_1^2\varepsilon^{\frac{5}{4}}.
	    \end{equation*}
	\end{proof}

	\subsection{Proof of Proposition~\ref{pro:main}}\label{Se:End}
	In this subsection, we complete the proof of Proposition~\ref{pro:main} by refining all the estimates for $w$ in~\eqref{est:Bootw}.
	\begin{proof}[Proof of Proposition~\ref{pro:main}]
		For any initial data $(w_{0},w_{1})$ satisfying the conditions~\eqref{est:initial1}-\eqref{est:initial2}, we consider the corresponding solution $w$ of~\eqref{equ:Wave}. From these conditions, we observe that
			\begin{align}\label{est:initialenergy}
				\mathcal{E}_{gst}^{\frac{1}{2}}(0,\Gamma^I w)&\lesssim K^{|I|+1}, \ \quad |I|\leq N,\\
				\mathcal{E}_{gst}^{\frac{1}{2}}(0,\partial\Gamma^I w)&\lesssim\varepsilon K^{|I|+1}, \quad |I|\leq N-1.\label{est:initialenergy1}
			\end{align}
	
		\textbf{Step 1.} Closing the estimates for $\mathcal{E}_{gst}(t,\Gamma^{I}{w})$ with $|I|\leq N$.
		
		We first perform the highest-order energy estimate. 
		Let $N\geq5$, $|I|=N$. Acting $\Gamma^I$ on both sides of \eqref{equ:Wave}, we get
        \begin{equation}\label{equ:gamma}
        	\begin{aligned}
        		-\Box\Gamma^I w&=P^{\gamma\alpha\beta}\partial_\gamma w\partial_\alpha\partial_\beta\Gamma^I w
        		+\sum_{\substack{|I_1|+|I_2|\leq|I|\\|I_2|<|I|}}a^I_{I_1,I_2}Q_P(\Gamma^{I_{1}}w,\Gamma^{I_{2}}w)\\
        		&+\sum_{\substack{|I_1|+|I_2|\leq|I|}}b_{I_1,I_2}^IQ_0(\Gamma^{I_{1}}w,\Gamma^{I_{2}}w),
        	\end{aligned}
        \end{equation}
        where $a^I_{I_1,I_2}, b^I_{I_1,I_2}$ are constants.
        
        Recall $e^q=e^{q(r,t)} $, where $q(r,t)=\int_{-\infty}^{r-t}\langle s\rangle^{-1-2\delta}\d s$. Multiplying both sides of equation \eqref{equ:gamma} by $e^q\partial_t \Gamma^I w$, and employing integration by parts, we have
		\begin{equation}
			\begin{aligned}
				&\frac{1}{2}\partial_t \int_{\R^{3}}e^q|\partial\Gamma^I w|^2\d x+\frac{1}{2}\sum_{a=1, 2,3}\int_{\R^{3}}e^q\frac{|G_a\Gamma^I w|^2}{\langle r-t\rangle^{1+2\delta}}\d x\\
				=&\int_{\R^{3}}H^{\alpha\beta}\partial_\alpha\partial_\beta\Gamma^I w\partial_t\Gamma^I w e^q\d x\\
				+&\sum_{\substack{|I_1|+|I_2|\leq|I|\\|I_2|<|I|}}\int_{\R^{3}}a^I_{I_1,I_2}Q_P(\Gamma^{I_{1}}w,\Gamma^{I_{2}}w)\partial_t\Gamma^I w e^q\d x\\
				+&\sum_{\substack{|I_1|+|I_2|\leq|I|}}\int_{\R^{3}}b^I_{I_1,I_2}Q_0(\Gamma^{I_{1}}w,\Gamma^{I_{2}}w)\partial_t\Gamma^I w e^q\d x,\label{equ:energy}
			\end{aligned}
		\end{equation}
		where $H^{\alpha\beta}=P^{\gamma\alpha\beta}\partial_\gamma w.$ We now consider the first term on the right-hand side of the above equality. By applying the product rule, it can be rewritten as
		\begin{equation}
			\begin{aligned}
				&H^{\alpha\beta}\partial_\alpha\partial_\beta\Gamma^I w\partial_t\Gamma^I w e^q\\
				&=\partial_\alpha(H^{\alpha\beta}\partial_\beta\Gamma^I w\partial_t\Gamma^I w e^q)-\frac{1}{2}\partial_t(H^{\alpha\beta}\partial_\beta\Gamma^I w\partial_\alpha\Gamma^I w e^q)\\
				&-\partial_\alpha H^{\alpha\beta}\partial_\beta\Gamma^Iw \partial_t\Gamma^I w e^q-H^{\alpha\beta}\partial_\beta\Gamma^I w\partial_t \Gamma^I w\partial_\alpha e^q\\
				&+\frac{1}{2}\partial_tH^{\alpha\beta}\partial_\beta\Gamma^I w\partial_\alpha\Gamma^I w e^q+\frac{1}{2}H^{\alpha\beta}\partial_\beta\Gamma^I w \partial_\alpha\Gamma^I w\partial_t e^q. \label{equ:H}
			\end{aligned}
		\end{equation}
	By directly computing the fourth term in \eqref{equ:H}, we obtain
		\begin{equation}
			\begin{aligned}
				&-H^{\alpha\beta}\partial_\beta\Gamma^I w\partial_t\Gamma^I w\partial_\alpha e^q\\
				=&-H^{0\beta}\partial_\beta\Gamma^I w\partial_t\Gamma^I w\partial_t e^q-H^{a\beta}\partial_\beta\Gamma^I w\partial_t\Gamma^I w\partial_a e^q\\
				&-H^{a\beta}\partial_\beta\Gamma^I w\partial_a\Gamma^I w\partial_t e^q+H^{a\beta}\partial_\beta\Gamma^I w\partial_a\Gamma^I w\partial_t e^q\\
				=&-H^{\alpha\beta}\partial_\beta\Gamma^I w\partial_\alpha\Gamma^I w\partial_t e^q-H^{a\beta}\partial_\beta\Gamma^I wG_a\Gamma^I w \frac{e^q}{\langle r-t\rangle^{1+2\delta}}.\label{equ:H2}
			\end{aligned}
		\end{equation}
	
		By~\eqref{equ:energy},~\eqref{equ:H}, and~\eqref{equ:H2}, we obtain
		\begin{equation}\label{equ:integral}
			\begin{aligned}
				&\frac{1}{2}\partial_t \int_{\R^{3}}e^q|\partial\Gamma^I w|^2\d x-\partial_t\int_{\R^{3}}H^{0\beta}\partial_\beta\Gamma^I w\partial_t\Gamma^I we^q\d x\\
				&+\frac{1}{2}\partial_t\int_{\R^{3}}H^{\alpha\beta}\partial_\beta\Gamma^I w\partial_\alpha\Gamma^I we^q\d x
				+\frac{1}{2}\sum_{a=1, 2,3}\int_{\R^{3}}e^q\frac{|G_a\Gamma^I w|^2}{\langle r-t\rangle^{1+2\delta}}\d x\\
				=&\sum_{\substack{|I_1|+|I_2|\leq|I|\\|I_2|<|I|}}\int_{\R^{3}}a^I_{I_1,I_2}Q_P(\Gamma^{I_{1}}w,\Gamma^{I_{2}}w)\partial_t\Gamma^I w e^q\d x\\
				+&\sum_{\substack{|I_1|+|I_2|\leq|I|}}\int_{\R^{3}}b^I_{I_1,I_2}Q_0(\Gamma^{I_{1}}w,\Gamma^{I_{2}}w)\partial_t\Gamma^I w e^q\d x\\
				-&\int_{\R^{3}}\partial_\alpha H^{\alpha\beta}\partial_\beta\Gamma^Iw \partial_t\Gamma^I w e^q\d x-\int_{\R^{3}}H^{a\beta}\partial_\beta\Gamma^I wG_a\Gamma^I w\frac{e^q}{\langle r-t\rangle^{1+2\delta}}\d x\\
				+&\frac{1}{2}\int_{\R^{3}}\partial_tH^{\alpha\beta}\partial_\beta\Gamma^I w\partial_\alpha\Gamma^I w e^q\d x-\frac{1}{2}\int_{\R^{3}}H^{\alpha\beta}\partial_\beta\Gamma^I w \partial_\alpha\Gamma^I w\partial_t e^q\d x\\
				=&\mathcal{A}_1+\mathcal{A}_2+\mathcal{A}_3+\mathcal{A}_4+\mathcal{A}_5+\mathcal{A}_6,
			\end{aligned}
		\end{equation}
		and define 
		\begin{equation*}
			\begin{aligned}
				E_M(t,w)&=\int_{\R^{3}}e^q\left(|\partial w|^2-2H^{0\beta}\partial_\beta w\partial_t w
				+H^{\alpha\beta}\partial_\beta w\partial_\alpha w\right)\d x\\
				&+\sum_{a=1}^{3}\int_{0}^{t}\int_{\R^{3}}e^q\frac{|G_a w|^2}{\langle r-s\rangle^{1+2\delta}}\d x\d s.
			\end{aligned}
		\end{equation*}

		Next, we will estimate $\int_{0}^{t}|\mathcal{A}_1|\d s$ to $\int_{0}^{t}|\mathcal{A}_6|\d s$ successively.
		
		\textbf{Bound for $\int_{0}^{t}|\mathcal{A}_1|\d s$.} By Lemma~\ref{lem:P&Q}, we deduce
		\begin{equation}
			\begin{aligned}
				&\int_{0}^{t}|\mathcal{A}_1|\d s\lesssim\sum_{\substack{|I_1|+|I_2|\leq|I|\\|I_2|<|I|}}\int_{0}^{t}\int_{\R^{3}}|Q_P(\Gamma^{I_{1}}w,\Gamma^{I_{2}}w)\partial_t\Gamma^I w e^q|\d x\d s\\
				&\lesssim\sum_{\substack{|I_1|+|I_2|\leq|I|\\|I_2|<|I|}}\int_{0}^{t}\int_{\R^{3}}|G\Gamma^{I_1}w||\partial\partial\Gamma^{I_2}w||\partial\Gamma^Iw|\d x\d s\\
				&+\sum_{\substack{|I_1|+|I_2|\leq|I|\\|I_2|<|I|}}\int_{0}^{t}\int_{\R^{3}}|\partial\Gamma^{I_1}w||G\partial\Gamma^{I_2} w||\partial\Gamma^Iw|\d x\d s
				= \mathcal{A}_{11}+\mathcal{A}_{12}.
			\end{aligned}
		\end{equation}
	
	By \eqref{est:L2norm1}, \eqref{est:L2norm2}, \eqref{est:pointwise2}, \eqref{est:Ggamma}, and the H\"{o}lder inequality, we get
	    \begin{equation}
	    	\begin{aligned}
	    		\mathcal{A}_{11}&\lesssim\sum_{\substack{|I_1|+|I_2|\leq|I|\\|I_2|\leq N-3}}\int_{0}^{t}\left\|\frac{G\Gamma^{I_1}w}{\langle r-s\rangle^{\frac{1}{2}+\delta}}\right\|\|\langle r-s\rangle^{\frac{1}{2}+\delta}\partial\partial\Gamma^{I_2} w\|_{L^\infty}\|\partial\Gamma^Iw\|\d s\\
	    		&+\sum_{\substack{|I_1|+|I_2|\leq|I|\\|I_1|\leq N-3,|I_2|\leq |I|-1}}\int_{0}^{t}\|G\Gamma^{I_1}w\|_{L^\infty}\|\partial\partial\Gamma^{I_2}w\|\|\partial\Gamma^Iw\|\d s\\
	    		&\lesssim\sum_{|I_1|\leq|I|}\int_{0}^{t}C_1^2\varepsilon\langle s\rangle^{-1+\delta}\left\|\frac{G\Gamma^{I_1}w}{\langle r-s\rangle^{\frac{1}{2}+\delta}}\right\|\d s
	    		+C_1^3\varepsilon^{\frac{5}{4}}
	    		\lesssim C_1^3\varepsilon.
	    	\end{aligned}
	    \end{equation}
	    
    Using estimates~\eqref{est:L2norm1}, \eqref{est:L2norm2}, \eqref{est:Gpartialgamma}, and \eqref{est:partialrefined} (with $\lambda=\frac{1}{2}$), we have
        \begin{equation}
        	\begin{aligned}
        		\mathcal{A}_{12}&\lesssim\sum_{\substack{|I_1|+|I_2|\leq|I|\\|I_2|\leq N-4}}\int_{0}^{t}\|\partial\Gamma^{I_1}w\|\|G\partial\Gamma^{I_2}w\|_{L^\infty}\|\partial\Gamma^Iw\|\d s\\
        		&+\sum_{\substack{|I_1|+|I_2|\leq|I|\\|I_1|\leq N-3,|I_2|\leq |I|-1}}\int_{0}^{t}\|\langle r-s\rangle^{\frac{1}{2}+\delta}\partial\Gamma^{I_1}w\|_{L^\infty}\left\|\frac{G\partial\Gamma^{I_2}w}{\langle r-s\rangle^{\frac{1}{2}+\delta}}\right\|\|\partial\Gamma^Iw\|\d s\\
        		&\lesssim C_1^3\varepsilon+\sum_{|I_2|\leq|I|-1}\int_{0}^{t}C_1^2\varepsilon^
        		{\frac{1}{4}}\langle s\rangle^{-\frac{3}{4}+2\delta}\left\|\frac{G\partial\Gamma^{I_2}w}{\langle r-s\rangle^{\frac{1}{2}+\delta}}\right\|\d s
        		\lesssim C_1^3\varepsilon.
        	\end{aligned}
        \end{equation}
        
	    \textbf{Bound for $\int_{0}^{t}|\mathcal{A}_2|\d s$.} By Lemma~\ref{lem:P&Q} and estimates~\eqref{est:L2norm1}, \eqref{est:L2norm2}, \eqref{est:Ggamma}, and~\eqref{est:partialrefined} (with $\lambda=\frac{1}{2}$), we derive
	    \begin{equation}
	    	\begin{aligned}
	    		&\int_{0}^{t}|\mathcal{A}_2|\d s\lesssim\sum_{\substack{|I_1|+|I_2|\leq|I|}}\int_{0}^{t}\int_{\R^{3}}|G\Gamma^{I_1}w\partial\Gamma^{I_2}w\partial_t\Gamma^I w|\d x\d s\\
	    		&\lesssim\sum_{\substack{|I_1|+|I_2|\leq|I|\\|I_1|\leq N-3}}\int_{0}^{t}\|G\Gamma^{I_1}w\|_{L^\infty}\|\partial\Gamma^{I_2}w\|\|\partial\Gamma^Iw\|\d s\\
	    		&+\sum_{\substack{|I_1|+|I_2|\leq|I|\\|I_2|\leq N-3}}\int_{0}^{t}\left\|\frac{G\Gamma^{I_1}w}{\langle r-s\rangle^{\frac{1}{2}+\delta}}\right\|\|\langle r-s\rangle^{\frac{1}{2}+\delta}\partial\Gamma^{I_2}w\|_{L^\infty}\|\partial\Gamma^Iw\|\d s
	    		\lesssim C_1^3\varepsilon^{\frac{1}{4}}.
	    	\end{aligned}
	    \end{equation}
	    
	    \textbf{Bound for $\int_{0}^{t}|\mathcal{A}_3|\d s$.} By Lemma~\ref{lem:P1}, \eqref{est:L2norm1}, \eqref{est:pointwise2}, and \eqref{est:Gpartialgamma}, we have
		\begin{equation}
			\begin{aligned}
				&\int_{0}^{t}|\mathcal{A}_3|\d s
				\lesssim\int_{0}^{t}\int_{\R^{3}}|G\partial w||\partial\Gamma^Iw|^2\d x\d s+\int_{0}^{t}\int_{\R^{3}}|\partial\partial w||G\Gamma^Iw||\partial\Gamma^Iw|\d x\d s\\
				&\lesssim\int_{0}^{t}\|G\partial w\|_{L^\infty}\|\partial\Gamma^Iw\|^2\d s
				+\int_{0}^{t}\|\langle r-s\rangle^{\frac{1}{2}+\delta}\partial\partial w\|_{L^\infty}\left\|\frac{G\Gamma^Iw}{\langle r-s\rangle^{\frac{1}{2}+\delta}}\right\|\|\partial\Gamma^Iw\|\d s\\
				&\lesssim C_1^3\varepsilon+\int_{0}^{t}C_1^2\varepsilon\langle s\rangle^{-1+\delta}\left\|\frac{G\Gamma^Iw}{\langle r-s\rangle^{\frac{1}{2}+\delta}}\right\|\d s
				\lesssim C_1^3\varepsilon.
			\end{aligned}
		\end{equation}
	
	    \textbf{Bound for $\int_{0}^{t}|\mathcal{A}_4|\d s$.} By \eqref{est:L2norm1},~\eqref{est:partial}, and the H\"{o}lder inequality, we have
	    \begin{equation}
	    	\begin{aligned}
	    		\int_{0}^{t}|\mathcal{A}_4|\d s
	    		&\lesssim\int_{0}^{t}\int_{\R^{3}}|\partial w\partial\Gamma^IwG\Gamma^I w|\langle r-s\rangle^{-1-2\delta}\d x\d s\\
	    		&\lesssim\int_{0}^{t}C_1^2\varepsilon^{\frac{1}{2}}\langle s\rangle^{-1+\delta}\left\|\frac{G\Gamma^Iw}{\langle r-s\rangle^{\frac{1}{2}+\delta}}\right\|\d s
	    		\lesssim C_1^3\varepsilon^{\frac{1}{2}}.
	    	\end{aligned}
	    \end{equation}

		\textbf{Bound for $\int_{0}^{t}|\mathcal{A}_5|\d s$.} Similarly to $\mathcal{A}_3$, we deduce
		\begin{equation}
			\begin{aligned}
				\int_{0}^{t}|\mathcal{A}_5|\d s
				\lesssim&\int_{0}^{t}\int_{\R^{3}}|G\partial w||\partial\Gamma^Iw|^2\d x\d s+\int_{0}^{t}\int_{\R^{3}}|\partial\partial w||G\Gamma^Iw||\partial\Gamma^Iw|\d x\d s
				\lesssim C_1^3\varepsilon.
			\end{aligned}
		\end{equation}
	
	    \textbf{Bound for $\int_{0}^{t}|\mathcal{A}_6|\d s$.} By Lemma~\ref{lem:P1}, \eqref{est:L2norm1},~\eqref{est:Ggamma},~\eqref{est:partial}, and the H\"{o}lder inequality, we infer that
		\begin{equation}
			\begin{aligned}
				&\int_{0}^{t}|\mathcal{A}_6|\d s
				\lesssim\int_{0}^{t}\int_{\R^{3}}\left|P^{\gamma\alpha\beta}\partial_\gamma w\partial_\beta\Gamma^I w\partial_\alpha\Gamma^I w\frac{e^q}{\langle r-s\rangle^{1+2\delta}}\right|\d x\d s\\
				&\lesssim\int_{0}^{t}\int_{\R^{3}}|Gw||\partial\Gamma^Iw|^2\d x\d s+\int_{0}^{t}\int_{\R^{3}}|\partial w||G\Gamma^Iw||\partial\Gamma^Iw|\langle r-s\rangle^{-1-2\delta}\d x\d s\\
				&\lesssim C_1^3\varepsilon^{\frac{1}{4}}+\int_{0}^{t}C_1^2\varepsilon^{\frac{1}{2}}\langle s\rangle^{-1+\delta}\left\|\frac{G\Gamma^Iw}{\langle r-s\rangle^{\frac{1}{2}+\delta}}\right\|\d s
				\lesssim C_1^3\varepsilon^{\frac{1}{4}}.
			\end{aligned}
		\end{equation}

	    Thus, we have
	    \begin{equation*}
	    	E_M(t,\Gamma^Iw)\lesssim E_M(0,\Gamma^Iw)+C_1^3\varepsilon^{\frac{1}{4}}.
	    \end{equation*}
	    
	    Note that $e^q\sim1$. By \eqref{est:partial}, we derive
	    \begin{equation*}
	    	\begin{aligned}
	    		&\int_{\R^{3}}|H^{\alpha\beta}\partial_\beta\Gamma^I w\partial_\alpha\Gamma^I w-2H^{0\beta}\partial_\beta\Gamma^I w\partial_t\Gamma^I w|\d x\\
	    		&\lesssim\|H^{\alpha\beta}\|_{L^{\infty}}\int_{\R^{3}}|\partial_\alpha\Gamma^I w\partial_\beta\Gamma^I w|\d x\\
	    		&\lesssim\|\partial w\|_{L^\infty}\int_{\R^{3}}|\partial\Gamma^I w|^2\d x\lesssim C_1\varepsilon\int_{\R^{3}}|\partial\Gamma^I w|^2\d x,
	    	\end{aligned}
	    \end{equation*}
	     which implies
	    \begin{equation*}
	    	E_M(t,\Gamma^Iw)\sim \mathcal{E}_{gst}(t,\Gamma^Iw).
	    \end{equation*}
    
		Therefore, the above inequalities and \eqref{est:initialenergy} yield
		\begin{equation}\label{est:highest1}
			\mathcal{E}_{gst}(t,\Gamma^Iw)\lesssim \mathcal{E}_{gst}(0,\Gamma^Iw)+C_1^3\varepsilon^{\frac{1}{4}}\lesssim K^{2N+2}+C_1^3\varepsilon^{\frac{1}{4}}, \quad\mbox{for}\quad |I|=N.
		\end{equation}
		
		Now we perform the lower-order ghost weight energy estimate. Let $N\geq5$ and $|I|\leq N-1$. Using~\eqref{est:ghost}, \eqref{est:initialenergy}, and Lemma~\ref{est:L1L2nonlinear}, we deduce that
		\begin{equation}\label{est:lower1}
			\begin{aligned}
				\mathcal{E}_{gst}^{\frac{1}{2}}(t,\Gamma^Iw)&\lesssim \mathcal{E}_{gst}^{\frac{1}{2}}(0,\Gamma^Iw)+\sum_{|J|\leq |I|}\int_{0}^{t}\left(\|\Gamma^JQ_P(w,w)\|+\|\Gamma^JQ_0(w,w)\|\right)\d s\\
				&\lesssim K^{N}+C_1^2\varepsilon^{\frac{1}{4}}.
			\end{aligned}
		\end{equation}
		Based on (\ref{est:highest1}) and (\ref{est:lower1}), we deduce that these estimates strictly improve the bootstrap bound for $\mathcal{E}_{gst}(t,\Gamma^I w)$ in (\ref{est:Bootw}), provided that $C_1$ is sufficiently large and $\varepsilon$ is sufficiently small (depending on $C_1$). 
		
		\textbf{Step 2.} Closing the estimate for $\mathcal{E}_{gst}(t,\partial\Gamma^{I}{w})$ with $|I|\leq N-1$.
		
		We first perform the highest-order energy estimate. Let $N\geq5$, and $|I|=N-1$. Acting $\partial\Gamma^I$ on both sides of the equation~\eqref{equ:Wave}, we get
		\begin{equation}\label{equ:partialgamma}
			\begin{aligned}
				-\Box\partial\Gamma^I w&=P^{\gamma\alpha\beta}\partial_\gamma w\partial_\alpha\partial_\beta\partial\Gamma^I w+ \sum_{\substack{|I_1|+|I_2|\leq|I|}}a^I_{I_1,I_2}Q_P(\partial\Gamma^{I_{1}}w,\Gamma^{I_{2}}w)\\
				&+\sum_{\substack{|I_1|+|I_2|\leq|I|\\|I_2|<|I|}}a^I_{I_1,I_2}Q_P(\Gamma^{I_{1}}w,\partial\Gamma^{I_{2}}w)+\sum_{\substack{|J|\leq|I|}}b_J^I\partial\Gamma^JQ_0(w,w),
			\end{aligned}
		\end{equation}
	    where $a^I_{I_1,I_2}, b^I_J$ are constants.

	    Multiplying (\ref{equ:partialgamma}) by $e^q\partial_t\partial\Gamma^I w$, and employing integration by parts, we have
	    \begin{equation*}
	    	\begin{aligned}
	    		&\frac{1}{2}\partial_t \int_{\R^{3}}e^q|\partial\partial\Gamma^I w|^2\d x+\frac{1}{2}\sum_{a=1, 2,3}\int_{\R^{3}}e^q\frac{|G_a\partial\Gamma^I w|^2}{\langle r-t\rangle^{1+2\delta}}\d x\\
	    		=&\int_{\R^{3}}H^{\alpha\beta}\partial_\alpha\partial_\beta\partial\Gamma^I w\partial_t\partial\Gamma^I w e^q\d x\\
	    		+&\sum_{\substack{|I_1|+|I_2|\leq|I|}}\int_{\R^{3}}a^I_{I_1,I_2}Q_P(\partial\Gamma^{I_{1}}w,\Gamma^{I_{2}}w)\partial_t\partial\Gamma^I w e^q\d x\\
	    		+&\sum_{\substack{|I_1|+|I_2|\leq|I|\\|I_2|<|I|}}\int_{\R^{3}}a^I_{I_1,I_2}Q_P(\Gamma^{I_{1}}w,\partial\Gamma^{I_{2}}w)\partial_t\partial\Gamma^I w e^q\d x\\
	    		+&\sum_{\substack{|J|\leq|I|}}\int_{\R^{3}}b^I_{J}\partial\Gamma^JQ_0(w,w)\partial_t\partial\Gamma^I w e^q\d x,
	    	\end{aligned}
	    \end{equation*}
    where $H^{\alpha\beta}=P^{\gamma\alpha\beta}\partial_\gamma w$.
        
        The first term on the right-hand side can be rewritten as
        \begin{equation*}
        	\begin{aligned}
        		&H^{\alpha\beta}\partial_\alpha\partial_\beta\partial\Gamma^I w\partial_t\partial\Gamma^I w e^q\\
        		=&\partial_\alpha(H^{\alpha\beta}\partial_\beta\partial\Gamma^I w\partial_t\partial\Gamma^I w e^q)-\frac{1}{2}\partial_t(H^{\alpha\beta}\partial_\beta\partial\Gamma^I w\partial_\alpha\partial\Gamma^I w e^q)\\
        		&-\partial_\alpha H^{\alpha\beta}\partial_\beta\partial\Gamma^Iw \partial_t\partial\Gamma^I w e^q-H^{a\beta}\partial_\beta\partial\Gamma^I wG_a\partial\Gamma^I w \frac{e^q}{\langle r-t\rangle^{1+2\delta}}\\
        		&+\frac{1}{2}\partial_tH^{\alpha\beta}\partial_\beta\partial\Gamma^I w\partial_\alpha\partial\Gamma^I w e^q-\frac{1}{2}H^{\alpha\beta}\partial_\beta\partial\Gamma^I w \partial_\alpha\partial\Gamma^I w\partial_t e^q.
        	\end{aligned}
        \end{equation*}

        Similarly to \eqref{equ:integral}, we obtain
        \begin{equation*}
        	\begin{aligned}
        		&\frac{1}{2}\partial_t \int_{\R^{3}}e^q|\partial\partial\Gamma^I w|^2\d x-\partial_t\int_{\R^{3}}H^{0\beta}\partial_\beta\partial\Gamma^I w\partial_t\partial\Gamma^I we^q\d x\\
        		&+\frac{1}{2}\partial_t\int_{\R^{3}}H^{\alpha\beta}\partial_\beta\partial\Gamma^I w\partial_\alpha\partial\Gamma^I we^q\d x
        		+\frac{1}{2}\sum_{a=1, 2,3}\int_{\R^{3}}e^q\frac{|G_a\partial\Gamma^I w|^2}{\langle r-t\rangle^{1+2\delta}}\d x\\
        		&=\sum_{\substack{|I_1|+|I_2|\leq|I|}}\int_{\R^{3}}a^I_{I_1,I_2}Q_P(\partial\Gamma^{I_{1}}w,\Gamma^{I_{2}}w)\partial_t\partial\Gamma^I w e^q\d x\\
        		&+\sum_{\substack{|I_1|+|I_2|\leq|I|\\|I_2|<|I|}}\int_{\R^{3}}a^I_{I_1,I_2}Q_P(\Gamma^{I_{1}}w,\partial\Gamma^{I_{2}}w)\partial_t\partial\Gamma^I w e^q\d x\\
        		&+\sum_{\substack{|J|\leq|I|}}\int_{\R^{3}}b^I_{J}\partial\Gamma^JQ_0(w,w)\partial_t\partial\Gamma^I w e^q\d x\\
        		&-\int_{\R^{3}}\partial_\alpha H^{\alpha\beta}\partial_\beta\partial\Gamma^Iw \partial_t\partial\Gamma^I w e^q\d x-\int_{\R^{3}}H^{a\beta}\partial_\beta\partial\Gamma^I wG_a\partial\Gamma^I w\frac{e^q}{\langle r-t\rangle^{1+2\delta}}\d x\\
        		&+\frac{1}{2}\int_{\R^{3}}\partial_tH^{\alpha\beta}\partial_\beta\partial\Gamma^I w\partial_\alpha\partial\Gamma^I w e^q\d x-\frac{1}{2}\int_{\R^{3}}H^{\alpha\beta}\partial_\beta\partial\Gamma^I w \partial_\alpha\partial\Gamma^I w\partial_t e^q\d x\\
        		&=\mathcal{B}_1+\mathcal{B}_2+\mathcal{B}_3+\mathcal{B}_4+\mathcal{B}_5+\mathcal{B}_6+\mathcal{B}_7.
        	\end{aligned}
        \end{equation*}
         
         Next, we successively estimate $\int_{0}^{t}|\mathcal{B}_1|\d s$ through $\int_{0}^{t}|\mathcal{B}_7|\d s$.
         
         \textbf{Bound for $\int_{0}^{t}|\mathcal{B}_1|\d s$.} Based on Lemma~\ref{lem:P&Q}, \eqref{est:L2norm2}, \eqref{est:Gpartialgamma}, and \eqref{est:pointwise2}, we obtain
        \begin{equation}
        	\begin{aligned}
        		&\int_{0}^{t}|\mathcal{B}_1|\d s\lesssim\sum_{\substack{|I_1|+|I_2|\leq|I|}}\int_{0}^{t}\int_{\R^{3}}|\partial\partial\Gamma^{I_1}w||G\partial\Gamma^{I_2}w||\partial\partial\Gamma^Iw|\d x\d s\\
        		&\lesssim\sum_{\substack{|I_1|+|I_2|\leq|I|\\|I_2|\leq N-4}}\int_{0}^{t}\|\partial\partial\Gamma^{I_1}w\|\|G\partial\Gamma^{I_2}w\|_{L^\infty}\|\partial\partial\Gamma^Iw\|\d s\\
        		&+\sum_{\substack{|I_1|+|I_2|\leq|I|\\|I_1|\leq N-3}}\int_{0}^{t}\|\langle r-s\rangle^{\frac{1}{2}+\delta}\partial\partial\Gamma^{I_1}w\|_{L^\infty}\left\|\frac{G\partial\Gamma^{I_2}w}{\langle r-s\rangle^{\frac{1}{2}+\delta}}\right\|\|\partial\partial\Gamma^Iw\|\d s\\
        		&\lesssim (C_1\varepsilon)^3+\sum_{\substack{|I_2|\leq|I|}}\int_{0}^{t}C_1^2\varepsilon^2\langle s\rangle^{-1+\delta}\left\|\frac{G\partial\Gamma^{I_2}w}{\langle r-s\rangle^{\frac{1}{2}+\delta}}\right\|\d s
        		\lesssim C_1^3\varepsilon^3.
        	\end{aligned}
        \end{equation}
    
        \textbf{Bound for $\int_{0}^{t}|\mathcal{B}_2|\d s$.} Using Lemma~\ref{lem:P&Q}, we derive
        \begin{equation}
        	\begin{aligned}
        		&\int_{0}^{t}|\mathcal{B}_2|\d s\lesssim\sum_{\substack{|I_1|+|I_2|\leq|I|\\|I_2|<|I|}}\int_{0}^{t}\int_{\R^{3}}|G\Gamma^{I_1}w||\partial\partial\partial\Gamma^{I_2}w||\partial\partial\Gamma^Iw|\d x\d s\\
        		&+\sum_{\substack{|I_1|+|I_2|\leq|I|\\|I_2|<|I|}}\int_{0}^{t}\int_{\R^{3}}|\partial\Gamma^{I_1}w||G\partial\partial\Gamma^{I_2}w||\partial\partial\Gamma^Iw|\d x\d s=\mathcal{B}_{21}+\mathcal{B}_{22}.\\
        	\end{aligned}
        \end{equation}

        Applying the H\"{o}lder inequality, we get
        \begin{equation}
        	\begin{aligned}
        		&\mathcal{B}_{21}\lesssim\sum_{\substack{|I_1|+|I_2|\leq|I|\\|I_1|\leq N-3,|I_2|<|I|}}\int_{0}^{t}\|G\Gamma^{I_1}w\|_{L^\infty}\|\partial\partial\partial\Gamma^{I_2}w\|\|\partial\partial\Gamma^Iw\|\d s\\
        		&+\sum_{\substack{|I_1|+|I_2|\leq|I|\\|I_2|\leq N-4}}\int_{0}^{t}\left\|\frac{G\Gamma^{I_1}w}{\langle r-s\rangle^{\frac{3}{4}(\frac{1}{2}+\delta)}\langle r\rangle^{\frac{1}{4}}}\right\|\|\langle r-s\rangle^{\frac{3}{4}(\frac{1}{2}+\delta)}\langle r\rangle^{\frac{1}{4}}\partial\partial\partial\Gamma^{I_2}w\|_{L^\infty}\|\partial\partial\Gamma^Iw\|\d s,
        	\end{aligned}
        \end{equation}
        where
        \begin{equation}\label{est:L2GGamma}
        	\begin{aligned}
        		&\left\|\frac{G\Gamma^{I_1}w}{\langle r-s\rangle^{\frac{3}{4}(\frac{1}{2}+\delta)}\langle r\rangle^{\frac{1}{4}}}\right\|\lesssim\left\|\frac{G\Gamma^{I_1}w}{\langle r\rangle}\right\|^{\frac{1}{4}}\left\|\frac{G\Gamma^{I_1}w}{\langle r-s\rangle^{\frac{1}{2}+\delta}}\right\|^{\frac{3}{4}}\\
        		&\lesssim\|\partial_rG\Gamma^{I_1}w\|^{\frac{1}{4}}\left\|\frac{G\Gamma^{I_1}w}{\langle r-s\rangle^{\frac{1}{2}+\delta}}\right\|^{\frac{3}{4}}
        		\lesssim\|\partial\partial\Gamma^{I_1}w\|^{\frac{1}{4}}\left\|\frac{G\Gamma^{I_1}w}{\langle r-s\rangle^{\frac{1}{2}+\delta}}\right\|^{\frac{3}{4}}.
        	\end{aligned}
        \end{equation}
        Thus, by \eqref{est:L2norm1}, \eqref{est:L2norm2}, \eqref{est:pointwise2}, \eqref{est:Ggamma}, and the above inequalities, we have
        \begin{equation}
        	\begin{aligned}
        		\mathcal{B}_{21}&\lesssim C_1^3\varepsilon^{\frac{9}{4}}+(C_1\varepsilon)^{\frac{9}{4}}\sum_{\substack{|I_1|\leq|I|}}\int_{0}^{t}\langle s\rangle^{-\frac{3}{4}}\left\|\frac{G\Gamma^{I_1}w}{\langle r-s\rangle^{\frac{1}{2}+\delta}}\right\|^{\frac{3}{4}}\d s\\
        		&\lesssim C_1^3\varepsilon^{\frac{9}{4}}+ (C_1\varepsilon)^{\frac{9}{4}}\sum_{\substack{|I_1|\leq|I|}}\left(\int_{0}^{t}\left\|\frac{G\Gamma^{I_1}w}{\langle r-s\rangle^{\frac{1}{2}+\delta}}\right\|^{2}\d s\right)^{\frac{3}{8}}\left(\int_{0}^{t}\langle s\rangle^{-\frac{6}{5}}\d s\right)^{\frac{5}{8}}\lesssim C_1^3\varepsilon^{\frac{9}{4}}.
        	\end{aligned}
        \end{equation}
        
        Next, we bound $\mathcal{B}_{22}$. Based on Lemma~\ref{lem:extra}, we have 
        \begin{equation*}
        	|G\partial\partial\Gamma^{I_2}w|\lesssim\langle s+r\rangle^{-1}\sum_{|J|=1}|\Gamma^J\partial\partial\Gamma^{I_2}w|.
        \end{equation*}
        Thus, combining the above inequality with \eqref{est:partial}, \eqref{est:Gpartial2gamma}, we get
        \begin{equation}
        	\begin{aligned}
        		\mathcal{B}_{22}&\lesssim\sum_{\substack{|I_1|+|I_2|\leq|I|\\|I_1|\leq N-3,|I_2|\leq |I|-1\\|J|\leq 1}}\int_{0}^{t}\|\langle s+r\rangle^{-1}\partial\Gamma^{I_1}w\|_{L^\infty}\|\partial\partial\Gamma^J\Gamma^{I_2}w\|\|\partial\partial\Gamma^Iw\|\d s\\
        		&+\sum_{\substack{|I_1|+|I_2|\leq|I|\\|I_2|\leq N-4}}\int_{0}^{t}\left\|\frac{\partial\Gamma^{I_1}w}{\langle r\rangle^{\frac{1}{4}}}\right\|\|\langle r\rangle^{\frac{1}{4}} G\partial\partial\Gamma^{I_2}w\|_{L^\infty}\|\partial\partial\Gamma^Iw\|\d s\\
        		&\lesssim (C_1\varepsilon)^3+(C_1\varepsilon)^2\sum_{\substack{|I_1|\leq|I|}}\int_{0}^{t}\langle s\rangle^{-\frac{7}{4}}\left\|\frac{\partial\Gamma^{I_1}w}{\langle r\rangle}\right\|^{\frac{1}{4}}\|\partial\Gamma^{I_1}w\|^{\frac{3}{4}}\d s\lesssim C_1^3\varepsilon^{\frac{9}{4}},
        	\end{aligned}
        \end{equation}
        where we used the H\"{o}lder inequality and the Hardy inequality.
       
        \textbf{Bound for $\int_{0}^{t}|\mathcal{B}_3|\d s$.} Using Lemma~\ref{lem:P&Q}, we derive
        \begin{equation}
        	\begin{aligned}
        		&\int_{0}^{t}|\mathcal{B}_3|\d s\lesssim\sum_{\substack{|J_1|+|J_2|\leq|I|}}\int_{0}^{t}\int_{\R^{3}}|G\partial\Gamma^{J_1}w\partial\Gamma^{J_2}w\partial_t\partial\Gamma^Iw|\d x\d s\\
        		&+\sum_{\substack{|J_1|+|J_2|\leq|I|}}\int_{0}^{t}\int_{\R^{3}}|\partial\partial\Gamma^{J_1}wG\Gamma^{J_2}w\partial_t\partial\Gamma^Iw|\d x\d s
        		=\mathcal{B}_{31}+\mathcal{B}_{32}.
        	\end{aligned}
        \end{equation}
        
        By the H\"{o}lder inequality, the Hardy inequality, and estimates~\eqref{est:L2norm1}, \eqref{est:L2norm2}, \eqref{est:partialrefined},~\eqref{est:Gpartialgamma}, we obtain
        \begin{equation}
        	\begin{aligned}
        		&\mathcal{B}_{31}\lesssim\sum_{\substack{|J_1|+|J_2|\leq|I|\\|J_1|\leq N-4}}\int_{0}^{t}\|\langle r\rangle^{\frac{1}{4}}G\partial\Gamma^{J_1}w\|_{L^\infty}\left\|\frac{\partial\Gamma^{J_2}w}{\langle r\rangle^{\frac{1}{4}}}\right\|\|\partial_t\partial\Gamma^Iw\|\d s\\
        		&+\sum_{\substack{|J_1|+|J_2|\leq|I|\\|J_2|\leq N-3}}\int_{0}^{t}\left\|\frac{G\partial\Gamma^{J_1}w}{\langle r-s\rangle^{\frac{1}{2}+\delta}}\right\|\|\langle r-s\rangle^{\frac{1}{2}+\delta}\partial\Gamma^{J_2}w\|_{L^\infty}\|\partial_t\partial\Gamma^Iw\|\d s\\
        		&\lesssim(C_1\varepsilon)^2\sum_{\substack{|J_2|\leq|I|}}\int_{0}^{t}\langle s\rangle^{-\frac{5}{4}+\delta}\|\partial\partial\Gamma^{J_2}w\|^{\frac{1}{4}}\|\partial\Gamma^{J_2}w\|^{\frac{3}{4}}\d s\\
        		&+C_1^2\varepsilon^{\frac{5}{4}}\sum_{\substack{|J_1|\leq|I|}}\int_{0}^{t}\langle s\rangle^{-\frac{3}{4}+2\delta}\left\|\frac{G\partial\Gamma^{J_1}w}{\langle r-s\rangle^{\frac{1}{2}+\delta}}\right\|\d s
        		\lesssim C_1^3\varepsilon^{\frac{9}{4}}.
        	\end{aligned}
        \end{equation}
        
        Similarly to $\mathcal{B}_{21}$, we have
        \begin{equation}
        	\begin{aligned}
        		&\mathcal{B}_{32}\lesssim\sum_{\substack{|J_1|+|J_2|\leq|I|\\|J_2|\leq N-3}}\int_{0}^{t}\|\partial\partial\Gamma^{J_1}w\|\|G\Gamma^{J_2}w\|_{L^\infty}\|\partial\partial\Gamma^Iw\|\d s\\
        		&+\sum_{\substack{|J_1|+|J_2|\leq|I|\\|J_1|\leq N-3}}\int_{0}^{t}\|\langle r-s\rangle^{\frac{3}{4}(\frac{1}{2}+\delta)}\langle r\rangle^{\frac{1}{4}}\partial\partial\Gamma^{J_1}w\|_{L^\infty}\left\|\frac{G\Gamma^{J_2}w}{\langle r-s\rangle^{\frac{3}{4}(\frac{1}{2}+\delta)}\langle r\rangle^{\frac{1}{4}}}\right\|\|\partial\partial\Gamma^Iw\|\d s\\
        		&\lesssim C_1^3\varepsilon^{\frac{9}{4}}+(C_1\varepsilon)^{\frac{9}{4}}\sum_{\substack{|J_2|\leq|I|}}\int_{0}^{t}\langle s\rangle^{-\frac{3}{4}}\left\|\frac{G\Gamma^{J_2}w}{\langle r-s\rangle^{\frac{1}{2}+\delta}}\right\|^{\frac{3}{4}}\d s\lesssim C_1^3\varepsilon^{\frac{9}{4}}.
        	\end{aligned}
        \end{equation}

        \textbf{Bound for $\int_{0}^{t}|\mathcal{B}_4|\d s$.} By Lemma~\ref{lem:P1}, the H\"{o}lder inequality, and estimates~\eqref{est:L2norm2}, \eqref{est:pointwise2}, \eqref{est:Gpartialgamma}, we get
        \begin{equation}
        	\begin{aligned}
        		&\int_{0}^{t}|\mathcal{B}_4|\d s\lesssim\int_{0}^{t}\int_{\R^{3}}|\partial\partial w||G\partial\Gamma^Iw||\partial\partial\Gamma^Iw|\d x\d s
        		+\int_{0}^{t}\int_{\R^{3}}|G\partial w||\partial\partial\Gamma^Iw|^2\d x\d s\\
        		&\lesssim\int_{0}^{t}\|\langle r-s\rangle^{\frac{1}{2}+\delta}\partial\partial w\|_{L^\infty}\left\|\frac{G\partial\Gamma^Iw}{\langle r-s\rangle^{\frac{1}{2}+\delta}}\right\|\|\partial\partial\Gamma^Iw\|\d s
        		+\int_{0}^{t}\|G\partial w\|_{L^\infty}\|\partial\partial\Gamma^Iw\|^2\d s\\
        		&\lesssim (C_1\varepsilon)^{2}\int_{0}^{t}\langle s\rangle^{-1+\delta}\left\|\frac{G\partial\Gamma^Iw}{\langle r-s\rangle^{\frac{1}{2}+\delta}}\right\|\d s+(C_1\varepsilon)^2\int_{0}^{t}\|G\partial w\|_{L^\infty}\d s\lesssim (C_1\varepsilon)^3.
        	\end{aligned}
        \end{equation}

        \textbf{Bound for $\int_{0}^{t}|\mathcal{B}_5|\d s$.} Applying the H\"{o}lder inequality and estimates \eqref{est:L2norm2}, \eqref{est:partial}, we get
        \begin{equation}
        	\begin{aligned}
        		\int_{0}^{t}|\mathcal{B}_5|\d s&\lesssim\int_{0}^{t}\int_{\R^{3}}|\partial w||\partial\partial\Gamma^Iw||G\partial\Gamma^Iw|\langle r-s\rangle^{-1-2\delta}\d x\d s\\
        		&\lesssim\int_{0}^{t}\|\partial w\|_{L^\infty}\|\partial\partial\Gamma^Iw\|\left\|\frac{G\partial\Gamma^Iw}{\langle r-s\rangle^{\frac{1}{2}+\delta}}\right\|\d s\\
        		&\lesssim\int_{0}^{t}C_1^2\varepsilon^{\frac{3}{2}}\langle s\rangle^{-1+\delta}\left\|\frac{G\partial\Gamma^Iw}{\langle r-s\rangle^{\frac{1}{2}+\delta}}\right\|\d s\lesssim C_1^3\varepsilon^{\frac{5}{2}}.
        	\end{aligned}
        \end{equation}
        
        \textbf{Bound for $\int_{0}^{t}|\mathcal{B}_6|\d s$.} Lemma~\ref{lem:P1} gives
        \begin{equation*}
        	\begin{aligned}
        		\int_{0}^{t}|\mathcal{B}_6|\d s\lesssim\int_{0}^{t}\int_{\R^{3}}|\partial\partial w||G\partial\Gamma^Iw||\partial\partial\Gamma^Iw|\d x\d s
        		+\int_{0}^{t}\int_{\R^{3}}|G\partial w||\partial\partial\Gamma^Iw|^2\d x\d s.
        	\end{aligned}
        \end{equation*}
        Similarly to $\mathcal{B}_4$, we can obtain
        \begin{equation}
        	\int_{0}^{t}|\mathcal{B}_6|\d s\lesssim(C_1\varepsilon)^3.
        \end{equation}

        \textbf{Bound for $\int_{0}^{t}|\mathcal{B}_7|\d s$.} By Lemma~\ref{lem:P1}, the H\"{o}lder inequality, and estimates \eqref{est:L2norm2}, \eqref{est:partial}, \eqref{est:Ggamma}, we derive
        \begin{equation}
        	\begin{aligned}
        		\int_{0}^{t}|\mathcal{B}_7|\d s&\lesssim\int_{0}^{t}\int_{\R^{3}}|\partial w||\partial\partial\Gamma^Iw||G\partial\Gamma^Iw|\langle r-s\rangle^{-1-2\delta}\d x\d s\\
        		&+\int_{0}^{t}\int_{\R^{3}}|Gw||\partial\partial\Gamma^Iw|^2\langle r-s\rangle^{-1-2\delta}\d x\d s\\
        		&\lesssim\int_{0}^{t}\|\partial w\|_{L^\infty}\|\partial\partial\Gamma^Iw\|\left\|\frac{G\partial\Gamma^Iw}{\langle r-s\rangle^{\frac{1}{2}+\delta}}\right\|\d s+\int_{0}^{t}\|Gw\|_{L^\infty}\|\partial\partial\Gamma^Iw\|^2\d s\\
        		&\lesssim C_1^3\varepsilon^{\frac{9}{4}}.
        	\end{aligned}
        \end{equation}
        
        Thus, we have
        \begin{equation*}
        	E_M(t,\partial\Gamma^Iw)\lesssim E_M(0,\partial\Gamma^Iw)+C_1^3\varepsilon^{\frac{9}{4}}.
        \end{equation*}
        
        By estimate \eqref{est:partial}, we have 
        \begin{equation*}
        	\begin{aligned}
        		&\int_{\R^{3}}|H^{\alpha\beta}\partial_\beta\partial\Gamma^I w\partial_\alpha\partial\Gamma^I w-2H^{0\beta}\partial_\beta\partial\Gamma^I w\partial_t\partial\Gamma^I w|\d x\\
        		&\lesssim\|H^{\alpha\beta}\|_{L^{\infty}}\int_{\R^{3}}|\partial_\alpha\partial\Gamma^I w\partial_\beta\partial\Gamma^I w|\d x\\
        		&\lesssim\|\partial w\|_{L^{\infty}}\int_{\R^{3}}|\partial\partial\Gamma^I w|^2\d x
        		\lesssim C_1\varepsilon\int_{\R^{3}}|\partial\partial\Gamma^I w|^2\d x,
        	\end{aligned}
        \end{equation*}
        which implies
        \begin{equation*}
        	E_M(t,\partial\Gamma^I w)\sim\mathcal{E}_{gst}(t,\partial\Gamma^I w) .
        \end{equation*}

        Therefore, the above computations, together with \eqref{est:initialenergy1}, yield
        \begin{equation*}
        	\mathcal{E}_{gst}(t,\partial\Gamma^Iw)\lesssim \mathcal{E}_{gst}(0,\partial\Gamma^Iw)+C_1^3\varepsilon^{\frac{9}{4}}\lesssim\varepsilon^2 K^{2N}+C_1^3\varepsilon^{\frac{9}{4}}.
        \end{equation*}
        
        Now we perform the lower-order ghost weight energy estimate. Let $N\geq5$, $|I|\leq N-2$. Based on \eqref{est:ghost}, \eqref{est:initialenergy1}, and Lemma~\ref{est:L1L2nonlinear}, it follows that
        \begin{equation*}
        	\begin{aligned}
        		\mathcal{E}_{gst}^{\frac{1}{2}}(t,\partial\Gamma^Iw)&\lesssim \mathcal{E}_{gst}^{\frac{1}{2}}(0,\partial\Gamma^Iw)+\sum_{|J|\leq |I|}\int_{0}^{t}\left(\|\partial\Gamma^JQ_P(w,w)\|+\|\partial\Gamma^JQ_0(w,w)\|\right)\d s\\
        		&\lesssim\varepsilon K^{N-1}+C_1^2\varepsilon^{\frac{5}{4}}.
        	\end{aligned}
        \end{equation*}
        
		These results strictly improve the bootstrap bound for $\mathcal{E}_{gst}(t,\partial\Gamma^I w)$ in~\eqref{est:Bootw}, provided $C_1$ is sufficiently large and $\varepsilon$ is sufficiently small (depending on $C_1$).
		
		Therefore, we have improved all the bootstrap estimates of $w$ in \eqref{est:Bootw}. In other words, we have completed the proof of Proposition~\ref{pro:main}.
	\end{proof}

    \subsection{Scattering of the solution $w$}
    To complete the proof of Theorem~\ref{thm:main1}, we need to show that the solution $w$ scatters linearly.

    Proposition~\ref{pro:main} and Lemma~\ref{est:L1L2nonlinear} give
    \begin{align*}
    	\int_{0}^{+\infty}\|Q_P(w,w)\|_{\dot{H}^{N-1}(\mathbb{R}^3)}\d s\lesssim\sum_{|I|=N-1}\int_{0}^{+\infty}\|\nabla^IQ_P(w,w)\|\d s\lesssim C_1^2\varepsilon,\\
    	\int_{0}^{+\infty}\|Q_0(w,w)\|_{\dot{H}^{N-1}(\mathbb{R}^3)}\d s\lesssim\sum_{|I|=N-1}\int_{0}^{+\infty}\|\nabla^IQ_0(w,w)\|\d s\lesssim C_1^2\varepsilon^{\frac{1}{4}}.
    \end{align*}
    
    Based on Lemma~\ref{lem:scatterWave} and the above estimates, we conclude that $w$ scatters linearly. The proof of Theorem~\ref{thm:main1} is completed.

	\section{Uniform boundedness of conformal energy}\label{Se:Con-energy}
	
	We note that in Section~\ref{Se:exist}, we have completed the proof of the global existence of the solution to \eqref{equ:Wave} and the uniform boundedness of the ghost weight energy. To complete the proof of Theorem \ref{thm:main2}, it remains to establish that the lower-order conformal energy is also uniformly bounded.

	For any initial data $(w_{0},w_{1})$ satisfying the conditions in Theorem~\ref{thm:main2}, we consider the corresponding solution $w$ of~\eqref{equ:Wave}. From the condition~\eqref{est:initialconf}, we see that 

		\begin{align}
			\sum_{|I|\leq N-1}\left(\mathcal{E}_{con}^{\frac{1}{2}}(0,\Gamma^Iw)+\|\Gamma^Iw(0,x)\|+\|\partial_t\Gamma^I w(0,x)\|_{L^1\cap L^2}\right)\leq C_2,\label{est:conf}
		\end{align}
for some constant $C_2$.

    \subsection{Rough bounds for conformal energy}
    In this subsection, we first obtain rough bounds for conformal energy. Based on estimate~\eqref{est:Conformal}, we need to control the weighted $L^2$ estimates on the nonlinear terms.
    
    Let $|I|\leq N-1$. By Lemma~\ref{lem:P&Q}, we deduce that
    \begin{equation*}
    	\begin{aligned}
    		&\|\langle s+r\rangle\Gamma^IQ_P(w,w)\|\\
    		&\lesssim\sum_{\substack{|I_1|+|I_2|\leq|I|}}(\|\langle s+r\rangle G\Gamma^{I_1} w\partial\partial\Gamma^{I_2} w\|+\|\langle s+r\rangle\partial\Gamma^{I_1} wG\partial\Gamma^{I_2} w\|)
    		=\mathcal{K}_1+\mathcal{K}_2.
    	\end{aligned}
    \end{equation*}
    
    By the H\"{o}lder inequality, \eqref{est:L2norm2}, \eqref{est:Ggamma}, \eqref{est:partialdouble}, and Lemma~\ref{lem:extra}, we obtain
    \begin{equation*}
    	\begin{aligned}
    		\mathcal{K}_1&\lesssim\sum_{\substack{|I_1|+|I_2|\leq|I|\\|I_1|\leq N-3}}\|\langle s+r\rangle G\Gamma^{I_1}w\|_{L^\infty}\|\partial\partial\Gamma^{I_2}w\|\\
    		&+\sum_{\substack{|I_1|+|I_2|\leq|I|\\|I_2|\leq N-3}}\left\|\frac{G\Gamma^{I_1}w}{\langle s-r\rangle^{\frac{1}{2}+\delta}}\right\|\|\langle s+r\rangle\langle s-r\rangle^{\frac{1}{2}+\delta}\partial\partial\Gamma^{I_2}w\|_{L^\infty}\\
    		&\lesssim C_1^2\varepsilon\langle s\rangle^{-\frac{1}{2}+\delta}+C_1\varepsilon^{\frac{1}{2}}\sum_{\substack{|I_1|\leq|I|}}\left\|\frac{G\Gamma^{I_1}w}{\langle s-r\rangle^{\frac{1}{2}+\delta}}\right\|.
    	\end{aligned}
    \end{equation*}
    Applying Lemma~\ref{lem:extra} and the H\"{o}lder inequality again, we deduce
    \begin{equation*}
    	\begin{aligned}
    		\mathcal{K}_2\lesssim\sum_{\substack{|I_1|+|I_2|\leq|I|\\|J|\leq1}}\|\partial\Gamma^{I_1}w\partial\Gamma^J\Gamma^{I_2}w\|
    		\lesssim\sum_{\substack{|I_1|\leq N\\|I_2|\leq N-2}}\|\partial\Gamma^{I_1}w\|\|\partial\Gamma^{I_2}w\|_{L^\infty}\lesssim C_1^2\langle s\rangle^{-1}.
    	\end{aligned}
    \end{equation*}
    Here we have used estimates \eqref{est:commutators}, \eqref{est:L2norm1}, and \eqref{est:pointwise1}.
    
    Next, we bound the second nonlinear term. Lemma~\ref{lem:P&Q} and the H\"{o}lder inequality yield
    \begin{equation*}
    	\begin{aligned}
    		&\|\langle s+r\rangle\Gamma^IQ_0(w,w)\|
    		\lesssim\sum_{\substack{|I_1|+|I_2|\leq|I|}}\|\langle s+r\rangle G\Gamma^{I_1}w\partial\Gamma^{I_2}w\|\\
    		&\lesssim\sum_{\substack{|I_1|+|I_2|\leq|I|\\|I_2|\leq N-2}}\|\langle s+r\rangle G\Gamma^{I_1}w\partial\Gamma^{I_2}w\|\\
    		&+\sum_{\substack{|I_1|+|I_2|\leq|I|\\|I_1|\leq N-3}}\|\langle s+r\rangle G\Gamma^{I_1}w\|_{L^\infty}\|\partial\Gamma^{I_2}w\|=\mathcal{L}_1+\mathcal{L}_2.
    	\end{aligned}
    \end{equation*}
    We estimate $\mathcal{L}_1$ by considering two cases.
    
    \textbf{Case 1.} If $r\leq \frac{s}{2}$ or $r\geq 2s$, then $\langle s+r\rangle\sim\langle s-r\rangle$. By the definition of $G$, the H\"{o}lder inequality, \eqref{est:L2norm1}, and \eqref{est:pointwise1}, we get

    \begin{equation*}
    	\begin{aligned}
    		\mathcal{L}_1&\lesssim\sum_{\substack{|I_1|+|I_2|\leq|I|\\|I_2|\leq N-2}}\|\langle s-r\rangle \partial\Gamma^{I_1}w\partial\Gamma^{I_2}w\|\\
    		&\lesssim\sum_{\substack{|I_1|+|I_2|\leq|I|\\|I_2|\leq N-2}}\|\partial\Gamma^{I_1}w\|\|\langle s-r\rangle\partial\Gamma^{I_2}w\|_{L^\infty}
    		\lesssim C_1^2\langle s\rangle^{-\frac{1}{2}}.
    	\end{aligned}
    \end{equation*}
    
    \textbf{Case 2.} If $\frac{s}{2}\leq r\leq 2s$, then $\langle s-r\rangle\lesssim\langle s+r\rangle\sim\langle s\rangle$.
    \begin{equation*}
    	\begin{aligned}
    		\mathcal{L}_1&\lesssim\sum_{\substack{|I_1|+|I_2|\leq|I|\\|I_2|\leq N-2}}\left\|\frac{G\Gamma^{I_1}w}{\langle s-r\rangle^{\frac{1}{2}+\delta}}\right\|\|\langle s+r\rangle\langle s-r\rangle^{\frac{1}{2}+\delta}\partial\Gamma^{I_2}w\|_{L^\infty}\\
    		&\lesssim\sum_{\substack{|I_1|\leq|I|}}C_1\left\|\frac{G\Gamma^{I_1}w}{\langle s-r\rangle^{\frac{1}{2}+\delta}}\right\|\|\langle s-r\rangle^{\delta}\|_{L^\infty}\\
    		&\lesssim\sum_{\substack{|I_1|\leq|I|}}C_1\langle s\rangle^{\delta}\left\|\frac{G\Gamma^{I_1}w}{\langle s-r\rangle^{\frac{1}{2}+\delta}}\right\|.
    	\end{aligned}
    \end{equation*}
    
    Applying Lemma~\ref{lem:extra}, \eqref{est:L2norm1}, and \eqref{est:gamma}, we infer that
    \begin{equation*}
    	\begin{aligned}
    		&\mathcal{L}_2\lesssim\sum_{\substack{|I_1|\leq N-3, |J|=1}}C_1\|\Gamma^J\Gamma^{I_1}w\|_{L^\infty}\lesssim C_1^2\langle s\rangle^{-\frac{1}{2}+\delta}.
    	\end{aligned}
    \end{equation*}
    Therefore, combining the above estimates with~\eqref{est:conf}, we obtain
    \begin{equation*}
    	\begin{aligned}
    		&\sum_{|I|\leq N-1}\mathcal{E}_{con}^{\frac{1}{2}}(t,\Gamma^Iw)\lesssim\sum_{|I|\leq N-1}\mathcal{E}_{con}^{\frac{1}{2}}(0,\Gamma^Iw)\\
    		&+\sum_{|I|\leq N-1}\int_{0}^{t}\|\langle s+r\rangle\Gamma^IQ_P(w,w)\|\d s
    		+\sum_{|I|\leq N-1}\int_{0}^{t}\|\langle s+r\rangle\Gamma^IQ_0(w,w)\|\d s\\
    		&\lesssim C_2+C_1^2\langle t\rangle^{\frac{1}{2}+\delta}.
    	\end{aligned}
    \end{equation*}

    Based on the definitions of ghost weight energy and conformal energy, we deduce that
    \begin{align}
    	    &\|w\|\lesssim\mathcal{E}_{con}^{\frac{1}{2}}(t,w)\lesssim C_2+C_1^2\langle t\rangle^{\frac{1}{2}+\delta},\label{est:w1}\\
    		&\sum_{1\leq|I|\leq N}\|\Gamma^Iw\|\lesssim\sum_{|J|=|I|-1}\left(\mathcal{E}_{gst}^{\frac{1}{2}}(t,\Gamma^Jw)+\mathcal{E}_{con}^{\frac{1}{2}}(t,\Gamma^Jw)\right)\lesssim C_2+C_1^2\langle t\rangle^{\frac{1}{2}+\delta}.\label{est:Gamma-w1}
    \end{align}
    
    Moreover, by Klainerman-Sobolev inequality \eqref{est:Sobo}, it follows that
    \begin{align}\label{est:rough1}
    	|\Gamma^Iw|\lesssim (C_2+C_1^2)\langle t+r\rangle^{-\frac{1}{2}+\delta}\langle t-r\rangle^{-\frac{1}{2}}, \quad\mbox{for}\quad|I|\leq N-2.
    \end{align}
    Lemma~\ref{lem:extra} together with the above estimate gives
     \begin{equation}\label{est:rough2}
    	|G\Gamma^Iw|\lesssim (C_2+C_1^2)\langle t+r\rangle^{-\frac{3}{2}+\delta}\langle t-r\rangle^{-\frac{1}{2}},\quad\mbox{for}\quad|I|\leq N-3.
    \end{equation}

    \subsection{Refined bounds for conformal energy}
    In this subsection, we establish the uniform boundedness of the conformal energy by applying Lemma~\ref{lem:L2}.

 \textbf{Step 1.} Refined bounds for $\|\Gamma^Iw\|$, $|I|\leq N-1$. Based on Lemmas~\ref{lem:P&Q}, \ref{lem:extra}, and the H\"{o}lder inequality, we infer that
 
 \begin{equation*}
 	\begin{aligned}
 		&\|\Gamma^IQ_P(w,w)\|_{L^1}
 		\lesssim\sum_{\substack{|I_1|+|I_2|\leq|I|}}(\|G\Gamma^{I_1} w\partial\partial\Gamma^{I_2} w\|_{L^1}+\|\partial\Gamma^{I_1} wG\partial\Gamma^{I_2} w\|_{L^1})\\
 		\lesssim&\sum_{\substack{|I_1|+|I_2|\leq|I|\\|J|=1}}\langle s\rangle^{-1}(\|\Gamma^J\Gamma^{I_1} w\partial\partial\Gamma^{I_2} w\|_{L^1}+\|\partial\Gamma^{I_1} w\Gamma^J\partial\Gamma^{I_2} w\|_{L^1})\\
 		\lesssim&(C_2+C_1^3\varepsilon)\langle s\rangle^{-\frac{1}{2}+\delta}+C_1^2\langle s\rangle^{-1}.
 	\end{aligned}
 \end{equation*}
 Here we have used estimates \eqref{est:L2norm1}, \eqref{est:L2norm2}, \eqref{est:w1}, and \eqref{est:Gamma-w1}.
 
 Using Lemmas~\ref{lem:P&Q}, \ref{lem:extra}, and the H\"{o}lder inequality again, it follows that
 \begin{equation*}
 	\begin{aligned}
 		&\|\Gamma^IQ_0(w,w)\|_{L^1}
 		\lesssim\sum_{\substack{|I_1|+|I_2|\leq|I|}}\|G\Gamma^{I_1} w\partial\Gamma^{I_2} w\|_{L^1}\\
 		\lesssim&\sum_{\substack{|I_1|+|I_2|\leq|I|\\|J|=1}}\langle s\rangle^{-1}\|\Gamma^J\Gamma^{I_1} w\partial\Gamma^{I_2} w\|_{L^1}
 		\lesssim (C_1C_2+C_1^3)\langle s\rangle^{-\frac{1}{2}+\delta}.
 	\end{aligned}
 \end{equation*}
 Here we have used estimates \eqref{est:L2norm1}, \eqref{est:w1}, and \eqref{est:Gamma-w1}.

 Applying Lemmas~\ref{lem:P&Q}, \ref{lem:extra}, and the H\"{o}lder inequality, we deduce that
 \begin{equation*}
 	\begin{aligned}
 		&\|\Gamma^IQ_P(w,w)\|
 		\lesssim\sum_{\substack{|I_1|+|I_2|\leq|I|}}(\|G\Gamma^{I_1} w\partial\partial\Gamma^{I_2} w\|+\|\partial\Gamma^{I_1} wG\partial\Gamma^{I_2} w\|)\\
 		\lesssim&\sum_{\substack{|I_1|+|I_2|\leq|I|\\|I_1|\leq N-3}}\|G\Gamma^{I_1}w\|_{L^\infty}\|\partial\partial\Gamma^{I_2}w\|+\sum_{\substack{|I_1|+|I_2|\leq|I|\\|I_2|\leq N-3, |J|=1}}\|\Gamma^J\Gamma^{I_1}w\|\|\langle s+r\rangle^{-1}\partial\partial\Gamma^{I_2}w\|_{L^\infty}\\
 		+&\sum_{\substack{|I_1|+|I_2|\leq|I|\\|J|=1}}\|\langle s+r\rangle^{-1}\partial\Gamma^{I_1}w\Gamma^J\partial\Gamma^{I_2}w\|\\
 		\lesssim&(C_2+C_1^3\varepsilon)\langle s\rangle^{-\frac{3}{2}+\delta}+C_1^2\langle s\rangle^{-2}.
 	\end{aligned}
 \end{equation*}
 Here we have used estimates \eqref{est:L2norm1}, \eqref{est:L2norm2}, \eqref{est:pointwise1}, \eqref{est:pointwise2}, \eqref{est:Ggamma}, \eqref{est:w1}, and \eqref{est:Gamma-w1}.
 
 By Lemmas~\ref{lem:extra}, ~\ref{lem:P&Q}, \eqref{est:L2norm1}, \eqref{est:pointwise1}, \eqref{est:w1}, \eqref{est:Gamma-w1}, and \eqref{est:rough1}, we infer that
 \begin{equation*}
 	\begin{aligned}
 		&\|\Gamma^IQ_0(w,w)\|
 		\lesssim\sum_{\substack{|I_1|+|I_2|\leq|I|}}\|G\Gamma^{I_1} w\partial\Gamma^{I_2} w\|\\
 		\lesssim&\sum_{\substack{|I_1|+|I_2|\leq|I|\\|J|=1}}\|\langle s+r\rangle^{-1}\Gamma^J\Gamma^{I_1}w\partial\Gamma^{I_2}w\|\\
 		\lesssim&\sum_{\substack{|I_1|+|I_2|\leq|I|\\|I_1|\leq N-3,|J|=1}}\|\langle s+r\rangle^{-1}\Gamma^J\Gamma^{I_1}w\|_{L^\infty}\|\partial\Gamma^{I_2}w\|\\
 		+&\sum_{\substack{|I_1|+|I_2|\leq|I|\\|I_2|\leq N-2,|J|=1}}\|\Gamma^J\Gamma^{I_1}w\|\|\langle s+r\rangle^{-1}\partial\Gamma^{I_2}w\|_{L^\infty}\lesssim (C_1C_2+C_1^3)\langle s\rangle^{-\frac{3}{2}+\delta}.
 	\end{aligned}
 \end{equation*}

 Thus, for $|I|\leq N-1$, taking $\eta=\frac{2}{3}$ in Lemma \ref{lem:L2} and using~\eqref{est:conf}, we have
 \begin{equation}\label{est:GammaL2}
 	\begin{aligned}
 		\|\Gamma^{I}w\|&\lesssim\|\Gamma^Iw(0)\|+\|\partial_t\Gamma^Iw(0)\|_{L^1\cap L^2}\\
 		&+\sum_{|J|\leq |I|}\int_{0}^{t}\langle s\rangle^{-\frac{1}{3}}(\|\Gamma^JQ_P(w,w)\|_{L^1}+\|\Gamma^JQ_0(w,w)\|_{L^1})\d s\\
 		&+\sum_{|J|\leq |I|}\int_{0}^{t}\langle s\rangle^{\frac{2}{3}}(\|\Gamma^JQ_P(w,w)\|+\|\Gamma^JQ_0(w,w)\|)\d s\\
 		&\lesssim C_2+\int_{0}^{t}(C_1C_2+C_1^3)\langle s\rangle^{-\frac{5}{6}+\delta}\d s
 		\lesssim (C_1C_2+C_1^3)\langle t\rangle^{\frac{1}{6}+\delta}.
 	\end{aligned}
 \end{equation}

 By Klainerman-Sobolev inequality \eqref{est:Sobo}, we obtain
 \begin{equation}\label{est:Gamma2}
 	|\Gamma^Iw|\lesssim (C_1C_2+C_1^3)\langle t+r\rangle^{-\frac{5}{6}+\delta}\langle t-r\rangle^{-\frac{1}{2}}, \quad\mbox{for}\quad|I|\leq N-3.
 \end{equation}

 Then, Lemma~\ref{lem:extra} yields
 \begin{equation}\label{est:GGamma2}
 	|G\Gamma^Iw|\lesssim (C_1C_2+C_1^3)\langle t+r\rangle^{-\frac{11}{6}+\delta}\langle t-r\rangle^{-\frac{1}{2}}, \quad\mbox{for}\quad|I|\leq N-4.
 \end{equation}

 \textbf{Step 2.} Uniform bounds for $\|\Gamma^Iw\|$, $|I|\leq N-2$. Based on Lemmas~\ref{lem:extra},~\ref{lem:P&Q}, and the H\"{o}lder inequality, we deduce that
 \begin{equation*}
 	\begin{aligned}
 		&\|\Gamma^IQ_P(w,w)\|_{L^1}
 		\lesssim\sum_{\substack{|I_1|+|I_2|\leq|I|}}(\|G\Gamma^{I_1} w\partial\partial\Gamma^{I_2} w\|_{L^1}+\|\partial\Gamma^{I_1} wG\partial\Gamma^{I_2} w\|_{L^1})\\
 		\lesssim&\sum_{\substack{|I_1|+|I_2|\leq|I|\\|J|=1}}\langle s\rangle^{-1}(\|\Gamma^J\Gamma^{I_1} w\partial\partial\Gamma^{I_2} w\|_{L^1}+\|\partial\Gamma^{I_1} w\Gamma^J\partial\Gamma^{I_2} w\|_{L^1})\\
 		\lesssim&(C_1C_2+C_1^4\varepsilon)\langle s\rangle^{-\frac{5}{6}+\delta}+C_1^2\langle s\rangle^{-1},
 	\end{aligned}
 \end{equation*}
 in which we have used estimates \eqref{est:L2norm1}, \eqref{est:L2norm2}, and \eqref{est:GammaL2}.
 
 Using Lemmas~\ref{lem:extra},~\ref{lem:P&Q}, the H\"{o}lder inequality, \eqref{est:L2norm1}, and \eqref{est:GammaL2} again, we get
  \begin{equation*}
 	\begin{aligned}
 		&\|\Gamma^IQ_0(w,w)\|_{L^1}
 		\lesssim\sum_{\substack{|I_1|+|I_2|\leq|I|}}\|G\Gamma^{I_1} w\partial\Gamma^{I_2} w\|_{L^1}\\
 		\lesssim&\sum_{\substack{|I_1|+|I_2|\leq|I|\\|J|=1}}\langle s\rangle^{-1}\|\Gamma^J\Gamma^{I_1} w\partial\Gamma^{I_2} w\|_{L^1}\\
 		\lesssim&(C_1^4+C_1^2C_2)\langle s\rangle^{-\frac{5}{6}+\delta}.
 	\end{aligned}
 \end{equation*}
 
 Now we consider $L^2$ norms for the nonlinear terms. Lemmas~\ref{lem:P&Q}, \ref{lem:extra}, \ref{lem:commutators} together with estimates \eqref{est:L2norm1}, \eqref{est:L2norm2}, \eqref{est:pointwise1}, \eqref{est:pointwise2}, \eqref{est:GammaL2}, \eqref{est:GGamma2} give
 \begin{equation*}
 	\begin{aligned}
 		&\|\Gamma^IQ_P(w,w)\|
 		\lesssim\sum_{\substack{|I_1|+|I_2|\leq|I|}}(\|G\Gamma^{I_1} w\partial\partial\Gamma^{I_2} w\|+\|\partial\Gamma^{I_1} wG\partial\Gamma^{I_2} w\|)\\
 		\lesssim&\sum_{\substack{|I_1|+|I_2|\leq|I|\\|I_1|\leq N-4}}\|G\Gamma^{I_1}w\|_{L^\infty}\|\partial\partial\Gamma^{I_2}w\|+\sum_{\substack{|I_1|+|I_2|\leq|I|\\|I_2|\leq N-3, |J|=1}}\|\Gamma^J\Gamma^{I_1}w\|\|\langle s+r\rangle^{-1}\partial\partial\Gamma^{I_2}w\|_{L^\infty}\\
 		+&\sum_{\substack{|I_1|+|I_2|\leq|I|\\|J|=1}}\|\langle s+r\rangle^{-1}\partial\Gamma^{I_1}w\Gamma^J\partial\Gamma^{I_2}w\|\\
 		\lesssim&(C_1^4\varepsilon+C_1C_2)\langle s\rangle^{-\frac{11}{6}+\delta}+C_1^2\langle s\rangle^{-2}.
 	\end{aligned}
 \end{equation*}
 
 By Lemmas~\ref{lem:P&Q}, \ref{lem:extra}, \eqref{est:L2norm1}, \eqref{est:GammaL2}, and \eqref{est:Gamma2}, we deduce that
 \begin{equation*}
 	\begin{aligned}
 		&\|\Gamma^IQ_0(w,w)\|
 		\lesssim\sum_{\substack{|I_1|+|I_2|\leq|I|}}\|G\Gamma^{I_1} w\partial\Gamma^{I_2} w\|\\
 		\lesssim&\sum_{\substack{|I_1|+|I_2|\leq|I|,|J|=1}}\|\langle s+r\rangle^{-1}\Gamma^J\Gamma^{I_1}w\partial\Gamma^{I_2}w\|\\
 		\lesssim&\sum_{\substack{|I_1|+|I_2|\leq|I|\\|I_1|\leq N-4,|J|=1}}\|\langle s+r\rangle^{-1}\Gamma^J\Gamma^{I_1}w\|_{L^\infty}\|\partial\Gamma^{I_2}w\|\\
 		+&\sum_{\substack{|I_1|+|I_2|\leq|I|\\|I_2|\leq N-2,|J|=1}}\|\Gamma^J\Gamma^{I_1}w\|\|\langle s+r\rangle^{-1}\partial\Gamma^{I_2}w\|_{L^\infty}\lesssim (C_1^4+C_1^2C_2)\langle s\rangle^{-\frac{11}{6}+\delta}.
 	\end{aligned}
 \end{equation*}
 
 Thus, for $|I|\leq N-2$, by using~\eqref{est:conf} and Lemma~\ref{lem:L2}, and taking $\frac{1}{3}+2\delta<\eta<\frac{5}{6}-\delta$ (for example, $\eta=\frac{7}{12}$), we have
 \begin{equation}\label{est:uniformbound}
 	\begin{aligned}
 		&\|\Gamma^{I}w\|\lesssim\|\Gamma^Iw(0)\|+\|\partial_t\Gamma^Iw(0)\|_{L^1\cap L^2}\\
 		&+\sum_{|J|\leq |I|}\int_{0}^{t}\langle s\rangle^{-\frac{7}{24}}(\|\Gamma^JQ_P(w,w)\|_{L^1}+\|\Gamma^JQ_0(w,w)\|_{L^1})\d s\\
 		&+\sum_{|J|\leq |I|}\int_{0}^{t}\langle s\rangle^{\frac{7}{12}}(\|\Gamma^JQ_P(w,w)\|+\|\Gamma^JQ_0(w,w)\|)\d s\\
 		&\lesssim C_2+\int_{0}^{t}(C_1^4+C_1^2C_2)\langle s\rangle^{-\frac{9}{8}+\delta}\d s+\int_{0}^{t}(C_1^4+C_1^2C_2)\langle s\rangle^{-\frac{5}{4}+\delta}\d s
 		\lesssim C_1^4+C_1^2C_2.
 	\end{aligned}
 \end{equation}

 Based on the above estimate, we conclude that
 \begin{equation*}
 	\begin{aligned}
 		\sum_{|I|\leq N-3}\mathcal{E}_{con}^{\frac{1}{2}}(t,\Gamma^I w)
 		\lesssim\sum_{\substack{|I|\leq N-3\\|J|\leq1}}\|\Gamma^J\Gamma^I w\|\lesssim C_1^4+C_1^2C_2.
 	\end{aligned}
 \end{equation*}
    	
 In addition, by \eqref{est:Sobo} and \eqref{est:uniformbound}, we deduce
 \begin{equation*}
 	\begin{aligned}
 		\sum_{|I|\leq N-4}|\Gamma^Iw|
 		&\lesssim\langle t+r\rangle^{-1}\langle t-r\rangle^{-\frac{1}{2}}\sum_{|I|\leq N-2}\|\Gamma^Iw\|\\
 		&\lesssim (C_1^4+C_1^2C_2)\langle t+r\rangle^{-1}\langle t-r\rangle^{-\frac{1}{2}}.
 	\end{aligned}
 \end{equation*}
 
 From Lemma~\ref{lem:extra} and the above estimate, it follows that
 \begin{equation*}
 		|\partial w(t,x)|\lesssim \langle t-r\rangle^{-1}\sum_{|J|=1}|\Gamma^J w|
 		\lesssim (C_1^4+C_1^2C_2)\langle t+r\rangle^{-1}\langle t-r\rangle^{-\frac{3}{2}}.
 \end{equation*}
 Therefore, the proof of Theorem~\ref{thm:main2} is now complete.

	\section*{Data availability statement}
	No datasets were generated or analyzed during the current study.
	
	\section*{Declarations}
	\textbf{Conflict of interest:} The author declares that there are no conflicts of interest.

	{\footnotesize
		 
	}

\end{document}